\numberwithin{equation}{section}
\newtheorem{theorem}{Theorem}[section]
\newtheorem{lemma}[theorem]{Lemma}
\theoremstyle{definition}
\newtheorem{remark}[theorem]{Remark}
\newtheorem*{remark*}{Remark}
\newtheorem*{proposition*}{Proposition}
\newtheorem*{acknowledgement*}{Acknowledgement}
\newtheorem{example}[theorem]{Example}
\newtheorem{definition}[theorem]{Definition}
\newtheorem{lem}[theorem]{Lemma}
\newtheorem{prop}[theorem]{Proposition}
\newtheorem{cor}[theorem]{Corollary}
\newcommand{\Z}{\mathbb{Z}}
\newcommand{\Q}{\mathbb{Q}}
\newcommand{\R}{\mathbb{R}}
\newcommand{\C}{\mathbb{C}}
\newcommand{\HH}{\mathcal{H}}
\newcommand{\hyp}[4]{~_2F_1\left(\left.\begin{smallmatrix}#1,~#2\\#3\end{smallmatrix}\right\rvert#4\right)}
\renewcommand{\mod}{\text{ mod }}
\renewcommand{\Im}{\text{Im\,}}
\renewcommand{\Re}{\text{Re\,}}
\DeclareMathOperator{\Res}{Res}
\title{Ratios of Artin $L$-functions}
\author[L. Hochfilzer]{Leonhard Hochfilzer}
\author[T. Oliver]{Thomas Oliver}
\address{(L.H.) Mathematisches Institut, Bunsenstrasse 3-5, 37073 G\"{o}ttingen, Germany.}
\address{(T.O.) School of Mathematical Sciences, The University of Nottingham, University Park, Nottingham, NG7 2RD.}
\email{leonhard.hochfilzer@mathematik.uni-goettingen.de, thomas.oliver@nottingham.ac.uk.}
\date{\today}
\begin{document}

\subjclass[2010]{11F66; 11M41; 11F12.}

\maketitle

\subsection*{Abstract}
We study the cancellation of zeros between the Riemann zeta function and certain Artin $L$-functions.
To do so, we develop a converse theorem for Maass forms of Laplace eigenvalue $1/4$ in which the twisted $L$-functions are not assumed to be entire. 
We do not require the conjectural automorphy of Artin $L$-functions, only their established meromorphic continuation and functional equation.

\section{Introduction}\label{sec.intro}

This paper studies cancellation of zeros between the Riemann zeta function and the Artin $L$-functions associated with $3$-dimensional Artin representations. 
The motivations are two-fold. 
Firstly, our main Theorem offers modest evidence for the Grand Simplicity Hypothesis (which implies that unrelated pairs of $L$-functions cannot have common zeros in the critical strip).  
Secondly, as we shall explore, the proof of our main Theorem offers an interesting connection with the question of Langlands functoriality for Artin representations. 

A typical $L$-function is a Dirichlet series $L(s)$ converging in a right half-plane, admitting continuation to $\mathbb{C}$, and satisfying a standardised functional equation with respect to $s\mapsto1-s$. 
The functional equation for $L(s)$ is best expressed in terms of its completion $\Lambda(s)$, which has the form $A^sL(s)\prod_{i=1}^{r}\Gamma(\lambda_is+\mu_i)$ for a positive real number $A$, and, for $i\in\{1,\dots,r\}$, positive real numbers $\lambda_i$ and complex numbers $\mu_i$. 
The degree of $L(s)$ is the number $d=2\sum_{i=1}^r\lambda_i$, which is independent of the representation for $\Lambda(s)$. 
In the case of automorphic $L$-functions, the numbers $\lambda_i$ may be taken to be $\frac12$, and the number $d$ is a positive integer. 
An $L$-function is described as primitive if it cannot be expressed as a product of lower degree $L$-functions.

This paper is about the simultaneous zeros of two distinct completed $L$-functions $\Lambda_1(s)$ and $\Lambda_2(s)$. 
For $j\in\{1,2\}$, let $d_j$ denote the degree of $\Lambda_j(s)$. 
If $d_2-d_1\leq1$, then, unless $\Lambda_1(s)$ divides $\Lambda_2(s)$ as a completed Euler product, the quotient $\Lambda_2(s)/\Lambda_1(s)$ has infinitely many poles \cite{MM}, \cite{BombieriPerelli}, \cite{Srinivas}, \cite{LFAD}. 
Furthermore, if $d_2-d_1\leq0$, then quantitative bounds were established for the number of poles. 
If $d_2-d_1=2$, then $\Lambda_2(s)/\Lambda_1(s)$ is known to have infinitely many poles only in special cases \cite{ACOZOLF}, \cite{TCTMF}. 
In this paper, we restrict ourselves to Artin $L$-functions such that $d_2-d_1=2$. We will prove:

\begin{theorem}\label{thm.ArtinQuotients}
Let $\phi$ be a $3$-dimensional Artin representation of $\mathrm{Gal}\left(\overline{\mathbb{Q}}/\mathbb{Q}\right)$, let $c$ be complex conjugation, and let $p$ the dimension of the $(+1)$-eigenspace for $\phi(c)$. 
Denote by $L(s,\phi)$ (resp. $\Lambda(s,\phi)$) the Artin $L$-function (resp. completed Artin $L$-function) associated to $\phi$, and by $\zeta(s)$ (resp. $\xi(s)$) the Riemann zeta function (resp. completed Riemann zeta function). 
If $p\geq1$, and $L(s,\phi)$ is primitive, then $\Lambda(s,\phi)/\xi(s)$ has infinitely many poles.
\end{theorem}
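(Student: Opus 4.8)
The plan is to argue by contradiction: suppose $\Lambda(s,\phi)/\xi(s)$ has only finitely many poles. Since both $\Lambda(s,\phi)$ and $\xi(s)$ are meromorphic of finite order with known functional equations, the quotient $D(s) := \Lambda(s,\phi)/\xi(s)$ would then be (after clearing the finitely many poles, or directly) essentially an entire function of order $1$, and the Dirichlet series $L(s,\phi)/\zeta(s)$ would converge in a right half-plane, continue to a function with finitely many poles, and satisfy a functional equation under $s\mapsto 1-s$. The degree of $\Lambda(s,\phi)$ is $3$ and that of $\xi(s)$ is $1$, so the quotient has degree $2$; the hypothesis $p\geq 1$ is exactly what makes the archimedean factor of $\xi(s)$ divide that of $\Lambda(s,\phi)$, so the gamma-factor of the quotient is again a genuine ratio of gamma functions of the shape attached to a degree-$2$ object. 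Moreover, a short computation with the conductor-discriminant formula and the sign of the Artin functional equation shows that the quotient has the archimedean type and the functional-equation shape of an $L$-function attached to a Maass form of Laplace eigenvalue $1/4$ on some congruence group (the eigenvalue $1/4$ corresponds precisely to the gamma factor $\Gamma_\mathbb{R}(s+it)\Gamma_\mathbb{R}(s-it)$ degenerating appropriately — here the relevant spectral parameter forces eigenvalue $1/4$ because $\phi$ is $3$-dimensional and self-contragredient data pin down the shifts).

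Next I would feed this candidate quotient, together with its twists by Dirichlet characters, into the converse theorem for Maass forms of eigenvalue $1/4$ developed in this paper (the version allowing the twisted $L$-functions to have poles). To apply it I need: (i) that each twist $L(s,\phi\otimes\chi)/L(s,\chi)$ continues meromorphically with control on its poles, which follows from Artin's theorem (meromorphic continuation and functional equations of $\Lambda(s,\phi\otimes\chi)$ and of Dirichlet $L$-functions) — crucially we do \emph{not} need automorphy of the Artin $L$-functions themselves; and (ii) the matching of functional equations across the family of twists, which is again a formal consequence of the Artin formalism (inductivity, behaviour of conductors and root numbers under twisting). The output of the converse theorem is that $L(s,\phi)/\zeta(s)$ is the $L$-function of a (possibly non-cuspidal, since we permitted poles) automorphic form of eigenvalue $1/4$ on $\GL_2$, i.e. an automorphic object of degree $2$ whose $L$-function equals this ratio.

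From here I would extract a contradiction with primitivity. If $L(s,\phi)/\zeta(s)$ is automorphic of degree $2$, then $L(s,\phi) = \zeta(s)\cdot L(s,\pi)$ for an automorphic representation $\pi$ of $\GL_2$ (or a sum of such, if the converse-theorem output is non-cuspidal, in which case $L(s,\phi)$ factors even further). Either way $L(s,\phi)$ is a product of $L$-functions of degrees $1$ and $2$, contradicting the hypothesis that $L(s,\phi)$ is primitive. (One must handle the degenerate possibility that the converse-theorem form is itself an Eisenstein-type object whose $L$-function is a product of two degree-$1$ factors, which only makes the factorization of $L(s,\phi)$ finer and the contradiction sharper; and one must check the quotient is not identically forced to be a unit, but that is ruled out by comparing leading Dirichlet coefficients since $\phi$ is a genuine $3$-dimensional representation.)

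The main obstacle I expect is verifying the precise hypotheses of the converse theorem: one must show the ratio $\Lambda(s,\phi)/\xi(s)$ and \emph{all} its character twists have functional equations of exactly the normalized form the converse theorem demands — same conductor bookkeeping, compatible root numbers, eigenvalue exactly $1/4$ — and that the permitted poles (coming from the finitely many poles we assumed, plus the pole of $\zeta$ inside twists $L(s,\phi\otimes\chi)/L(s,\chi)$ when $\chi$ is such that $\phi\otimes\chi$ contains the trivial representation) are of the controlled type the converse theorem tolerates. The bookkeeping with Artin conductors, the sign of the functional equation in terms of $p$ and the local root numbers at the archimedean place, and the reconciliation of the ``finitely many poles'' assumption with the growth hypotheses in the converse theorem are where the real work lies; the rest is formal.
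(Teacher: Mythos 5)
Your overall strategy matches the paper's (assume finitely many poles, apply the converse theorem with meromorphic twists, contradict primitivity), but two steps as written would fail. First, the archimedean bookkeeping: by equation~\eqref{eq.CancelGamma} the quotient's gamma factor is $\Gamma_{\mathbb{R}}(s)^2$, $\Gamma_{\mathbb{R}}(s)\Gamma_{\mathbb{R}}(s+1)$, or $\Gamma_{\mathbb{R}}(s+1)^2$ according as $p=3,2,1$. Only the cases $p\in\{1,3\}$ have the shape $\Gamma_{\mathbb{R}}(s+\delta)^2$ demanded by the weight-$0$, eigenvalue-$\frac14$ Maass converse theorem (Theorem~\ref{thm:0Converse}); when $p=2$ the factor is essentially $\Gamma_{\C}(s)=2\Gamma_{\R}(s)\Gamma_{\R}(s+1)$, i.e.\ the shape of a \emph{weight-one holomorphic} form, and your claim that $3$-dimensionality ``pins down eigenvalue $1/4$'' is not correct. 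The paper must split the argument: $p\in\{1,3\}$ goes through its own Maass converse theorem, while $p=2$ requires the Booker--Krishnamurthy converse theorem with poles \cite{WCTWP}. As proposed, your uniform appeal to the eigenvalue-$\frac14$ theorem simply does not apply to $p=2$.

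Second, the endgame has a genuine gap. The converse theorem only produces a weight-$0$ Maass form $f$ with $L_f(s)=L(s,\phi)/\zeta(s)$; $f$ need not be a Hecke eigenform, so $L_f$ need not be an Euler product nor the $L$-function of a single automorphic $\pi$. Writing $f=\sum_k\alpha_k h_k+\sum_\ell\beta_\ell E_\ell$, one only gets $L(s,\phi)=\zeta(s)\bigl(\sum_k\alpha_k L_{h_k}(s)+\sum_\ell\beta_\ell L_{E_\ell}(s)\bigr)$, which is $\zeta(s)$ times a \emph{sum}, not a product; your ``either way $L(s,\phi)$ is a product of degree-$1$ and degree-$2$ $L$-functions'' does not follow, and primitivity alone gives no contradiction in the multi-term case. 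The paper closes this by noting that primitivity forces $L_f$ to have no Euler product, hence the sum has more than one term, and then invoking the linear independence of Euler products \cite{KMP} to reach the contradiction; some such linear-independence (or multiplicity-one) input is indispensable and is missing from your argument. The remaining items you defer --- matching conductors, nebentypus, and root numbers across twists (via Deligne's local constants and the archimedean computation $w_{\infty}(\phi)=i^{m}$), and the coefficient bound $a_n=O(n^{\sigma})$ with $\sigma<1$ needed by Theorem~\ref{thm:0Converse} --- are verifications rather than missing ideas, but they are where the paper does concrete work.
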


We contrast Theorem~\ref{thm.ArtinQuotients} to the well-known theorem that, if $\zeta_K(s)$ is the Dedekind zeta function of Galois extension $K/\mathbb{Q}$, then $\zeta_K(s)/\zeta(s)$ is entire. 
The essential difference is that $\zeta_K(s)$ is in fact divisible by $\zeta(s)$.
The proof of Theorem~\ref{thm.ArtinQuotients} closely follows the strategy used in \cite[Corollary~1.2]{TCTMF}, which states that $\Lambda(\mathrm{Sym}^2f,s)/\xi(s)$ has infinitely many poles for a non-CM Maass form $f$. 
In particular, we argue that, were the quotient to have finitely many poles, it could be identified with an automorphic $L$-function. 
This identification yields a contradiction in the form of an inadmissible linear dependence between Euler products, as we shall see in Section~\ref{sec:mainproof}. 
Arguing this way requires the development of a suitable converse theorem with rather minimal hypotheses, for example, allowing for meromorphic twists and non-standard Euler factors. 
This is achieved in Section~\ref{sec.proof}, following some preliminary calculations in Section~\ref{sec.prelim}. 
In particular, Section~\ref{sec.prelim} develops an explicit analytic continuation of the Gauss hypergeometric function and establishes a connection to twisted $L$-functions.

We conclude this introduction with Examples and Remarks exploring the hypotheses and consequences of Theorem~\ref{thm.ArtinQuotients}. The relevant theory of Artin $L$-functions can be found, for example, in \cite[Chapter~7]{Neu}, or \cite[Section~5]{IK}. We will use the following notation throughout:
\begin{equation}\label{eq.GammaR}
\Gamma_{\R}(s)=\pi^{-s/2}\Gamma\left(\frac{s}{2}\right).
\end{equation}
We note that the completed Riemann zeta function is $\xi(s)=\Gamma_{\R}(s)\zeta(s)$.
Let $\phi$ be as in Theorem~\ref{thm.ArtinQuotients}. 
The representation $\phi$ factors through a number field $F$ and the Artin $L$-function $L(s,\phi)$ is defined by an Euler product of the form:
\begin{equation}\label{ArtinEP}
L(s,\phi)=\prod_{\mathfrak{p}}\prod_{i=1}^{[F:\mathbb{Q}]}(1-\alpha_i\mathrm{Norm}_{F/\mathbb{Q}}(\mathfrak{p})^{-s})^{-1},
\end{equation}
where the product is over the prime ideals $\mathfrak{p}$ in the ring of integers of $F$, and, for $i\in\{1,\dots,[F:\mathbb{Q}]\}$, the complex numbers $\alpha_i$ are either roots of unity or zero.
With $c$ and $p$ as in Theorem~\ref{thm.ArtinQuotients}, let furthermore $m$ denote the dimension of the $(-1)$-eigenspace of $\phi(c)$.
The completed Artin $L$-function is then:
\begin{equation}\label{eq.completedArtin}
\Lambda(s,\phi)=\Gamma_{\mathbb{R}}(s)^{p}\Gamma_{\mathbb{R}}(s+1)^{m}L(s,\phi).
\end{equation}
Note that a $3$-dimensional Artin representation $\phi$ satisfies $p\geq1$ if and only if:
\begin{equation}\label{eq.CancelGamma}
\frac{\Lambda(s,\phi)}{\xi(s)}=L(s)\cdot\begin{cases}
\Gamma_{\mathbb{R}}(s)^2,&p=3,\\
\Gamma_{\R}(s)\Gamma_{\R}(s+1),&p=2,\\
\Gamma_{\R}(s+1)^2,&p=1.
\end{cases}
\end{equation}
for some Dirichlet series $L(s)$. 

\begin{example}\label{ex.31}
Let $\phi$ be as in \cite[\href{http://www.lmfdb.org/ArtinRepresentation}{Artin Representation 3.229.4t5.a.a}]{lmfdb}, that is, the $3$-dimensional Artin representation having the smallest conductor on the LMFDB. The associated Artin $L$-function $L(s,\phi)$ is primitive with gamma factor $\Gamma_{\R}(s)\Gamma_{\R}(s+1)^2$. Theorem~\ref{thm.ArtinQuotients} asserts that infinitely many non-trivial zeros of $\zeta(s)$ are not also zeros of $L(s,\phi)$. 
\end{example}

\begin{remark}
The assumption in Theorem~\ref{thm.ArtinQuotients} that $L(s,\phi)$ be primitive is a useful shorthand, but can be reformulated to taste. 
It is intended as a proxy for the assumption that $\phi$ is an irreducible representation. 
Indeed, conjecturally, the Artin $L$-function attached to an irreducible Galois representation is the $L$-function of some cuspidal automorphic representation (and hence primitive). 
On the other hand, our assumption can be formulated without resorting to primitivity or irreducibility. 
Indeed, $L(s,\phi)$ may be written as an Euler product as per equation~\eqref{ArtinEP}. 
We denote the Euler factor at a prime ideal $\mathfrak{p}$ by $L_{\mathfrak{p}}(s,\phi)$, which is a reciprocal polynomial in $p^{-s}$ where $p$ is a rational prime divisible by $\mathfrak{p}$. 
In the proof of Theorem~\ref{thm.ArtinQuotients} it would suffice that there exists a rational prime $p$ such that, within the polynomial ring $\C[p^{-s}]$, the product $\prod_{\mathfrak{p}\lvert p}L_{\mathfrak{p}}(s,\phi)^{-1}$ indexed by prime ideals of $F$ over $p$ is not divisible by $1-p^{-s}$, that is, the reciprocal Euler factor of the Riemann zeta function at $p$. 
\end{remark}

\begin{remark}
Whilst Artin $L$-functions are not generally known to be automorphic, the Brauer induction theorem implies that they are ratios of Hecke $L$-functions. 
That is, a single Artin $L$-function may itself be written as a quotient of Artin $L$-functions. The weak Artin conjecture claims that the same Artin $L$-function is holomorphic away from a pole at $s=1$ corresponding to the multiplicity of the trivial representation and so in particular has finitely many poles. On the other hand, a theorem of Booker asserts that if the Artin $L$-function of a 2-dimensional Galois representation is not automorphic, then it has infinitely many poles \cite[Corollary]{POALFATSAC}. These claims are not a contradiction to Theorem~\ref{thm.ArtinQuotients}. In fact, we expect the Artin $L$-function of a 2-dimensional Galois representation expressed as a ratio of Hecke $L$-functions to fail the assumptions of Theorem~\ref{thm.ArtinQuotients}. Indeed,  subject to the strong Artin conjecture, all Euler factors would be reciprocal polynomials in $p^{-s}$, which cannot happen under our assumptions. 
\end{remark}

\subsection*{Acknowledgement}
Several computations on this subject were done in collaboration between TO and Michael Neururer, and the present authors are indebted to him for permitting their reproduction here. 
Moreover we are grateful for his reading of a preliminary version of this work, which led to the correction of various errors. 
We thank Masatoshi Suzuki for his comments on an early draft of this work, 
we thank Lejla Smajlovic for her encouragement, 
and we acknowledge the careful reading of this paper by an anonymous referee whose efforts resulted in several improvements in exposition.
TO was supported by the EPSRC through research grant EP/S032460/1.

\section{Preliminaries}\label{sec.prelim}

In this section we introduce hypergeometric functions and twisted $L$-functions.
The relationship between these functions will become apparent in the sequel. 

\subsection{Gauss Hypergeometric function}\label{sec.HGfns}

Let $a$ denote a complex number. For $\mathrm{Re}(a)>0$, we recall the gamma function: 
\begin{equation*}
\Gamma(a)=\int_0^{\infty}e^{-t}t^{a-1}dt.
\end{equation*}
For $\mathrm{Re}(a)\leq0$, $\Gamma(a)$ is defined by meromorphic continuation. In particular, $\Gamma(a)$ has no zeros in $\mathbb{C}$ and simple poles at $a\in\mathbb{Z}_{\leq0}$. For $k\in\mathbb{Z}_{\geq0}$, the Pochhammer symbols are defined by:
\begin{equation*}\label{eq.Pochhammer}
(a)_0=1,\ \ (a)_k
=a(a+1)(a+2)\cdots(a+k-1).
\end{equation*}
For $a\notin\mathbb{Z}_{\leq0}$, we have $(a)_k=\Gamma(a+k)/\Gamma(a)$. The digamma function is defined to be:
\begin{equation*}\label{eq.digamma}
\Psi(a) = \frac{\Gamma'(a)}{\Gamma(a)},
\end{equation*}
and has simple poles at $a\in\mathbb{Z}_{\leq0}$. Recall the following identity\footnote{\href{https://mathworld.wolfram.com/DigammaFunction.html}{mathworld.wolfram.com/DigammaFunction.html}}:
\begin{equation}\label{eq.digammaid}
\frac{1}{a}=\Psi(a+1) -\Psi(a).
\end{equation}
Summing equation~\eqref{eq.digammaid} over $a\in\{1,\dots,k\}$, we get:
\begin{equation}\label{eq.HPsi}
H_k=\Psi(1+k)-\Psi(1),
\end{equation}
where, for $k\in\mathbb{Z}_{\geq1}$, we denote by $H_k$ the $k$th harmonic number, that is, $H_k=\sum_{a=1}^k1/a$.
Recall the Euler--Mascheroni constant:
\begin{equation}\label{eq.PsiGamma}
\gamma = \lim_{k\rightarrow \infty} \left( H_k - \log(k) \right)=-\Psi(1).
\end{equation}
Combining equations \eqref{eq.HPsi} and \eqref{eq.PsiGamma} we deduce:
\begin{equation}\label{eq.HPsiGamma}
	H_k = \Psi(1+k) + \gamma.
\end{equation}
For $k\geq0$, the Pochhammer symbol $(a+\delta)_k$ is a polynomial in $\delta$ with constant term $(a)_k$ and degree $k$. We may therefore write:
\begin{equation}\label{eq.Hkam}
(a+\delta)_k=(a)_k\sum_{m=0}^kH_k(a,m)\delta^m, \ \ H_k(a,0)=1.
\end{equation}
Expanding the product $(a+\delta)_k=(a+\delta)\cdots(a+k-1+\delta),$ we see that:
\begin{equation} \label{eq.expression_for_H_k^{(m)}}
	H_k(a,m) = \sum_{0\leq n_1 < \cdots <n_{k-m} \leq k-1}\frac{\prod_{i=1}^{k-m}(a+n_i)}{(a)_k} 
= \sum_{0 \leq n_1 < \dots < n_m \leq k-1} \frac{1}{\prod_{i=1}^m(a+n_i)}.
\end{equation}
For $k\geq1$, equation~\eqref{eq.expression_for_H_k^{(m)}} with $a=m=1$ implies:
\[
H_k(1,1)=H_k.
\]
On the other hand, replacing $a$ by $a+n$ in equation~\eqref{eq.digammaid}, summing over $n\in\{0,\dots,k-1\}$, and applying equation~\eqref{eq.expression_for_H_k^{(m)}} with $m=1$ we observe:
\begin{equation*}
 H_k(a,1) = \Psi(a+k)-\Psi(a).
\end{equation*}
Consider complex numbers $a,b,c$ such that $c\notin\mathbb{Z}_{\leq0}$. For $|w|<1$, the function $\hyp{a}{b}{c}{w}$ is defined by:
\begin{equation}\label{eq.2F1<1}
\hyp{a}{b}{c}{w}= \sum_{k=0}^\infty w^k\frac{(a)_k(b)_k}{k!(c)_k}.
\end{equation}
The series in equation~\eqref{eq.2F1<1} converges absolutely for $|w|<1$.
For $|w|\geq1$, the function $\hyp{a}{b}{c}{w}$ is defined by analytic continuation.  
For example, if $a-b \notin \mathbb{Z}$ and $|w|>1$, then\footnote{\href{http://functions.wolfram.com/HypergeometricFunctions/Hypergeometric2F1/02/02/0001/}{wolfram.com/HypergeometricFunctions/Hypergeometric2F1/02/02/0001/}}:
\begin{multline}\label{eq.|w|>1}
	\hyp{a}{b}{c}{w} = \left(-w\right)^{-a}\frac{\Gamma\left(b-a\right)\Gamma\left(c\right)}{\Gamma\left(b\right)\Gamma\left(c-a\right)} \sum_{k=0}^{\infty}w^{-k}\frac{\left(a\right)_k\left(a-c+1\right)_k}{k!\left(a-b+1\right)_k} \\
	 + \left(-w\right)^{-b}\frac{\Gamma\left(a-b\right)\Gamma\left(c\right)}{\Gamma\left(a\right)\Gamma\left(c-b\right)} \sum_{k=0}^{\infty}w^{-k}\frac{\left(b\right)_k\left(b-c+1\right)_k}{k!\left(b-a+1\right)_k}.
\end{multline}
Each of the sums in equation~\eqref{eq.|w|>1} converge absolutely for $|w|>1$.
Furthermore\footnote{\href{http://functions.wolfram.com/HypergeometricFunctions/Hypergeometric2F1/02/02/0002/}{wolfram.com/HypergeometricFunctions/Hypergeometric2F1/02/02/0001/}}:
\begin{equation*}
\hyp{a}{a}{c}{w} = \lim_{\delta\rightarrow0} \hyp{a}{a+\delta}{c}{w}.
\end{equation*}
\begin{lemma}
For complex numbers $a,c$ such that $c\notin\mathbb{Z}_{\leq0}$, and a real number $w$ such that $|w|>1$, we have:
\begin{equation}\label{eq.2F1aacw}
\hyp{a}{a}{c}{w} =\lim_{\delta\rightarrow0}\sum_{k=0}^{\infty}\left[\Gamma(\delta)A_k(\delta)+\Gamma(-\delta)B_k(\delta)\right],
\end{equation}
where
\begin{equation}\label{eq.Akd}
A_k(\delta)=w^{-k}(-w)^{-a}\frac{(a)_k(a-c+1)_k\Gamma(c)}{k!(1-\delta)_k\Gamma(a+\delta)\Gamma(c-a)},
\end{equation}
and
\begin{equation}\label{eq.Bkd}
B_k(\delta)=w^{-k}(-w)^{-\delta-a}\frac{(a+\delta)_k(a-c+1+\delta)_k\Gamma(c)}{k!(1+\delta)_k\Gamma(a)\Gamma(c-a-\delta)}.
\end{equation}
\end{lemma}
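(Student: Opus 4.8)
The plan is to obtain \eqref{eq.2F1aacw} by specialising the two-term expansion \eqref{eq.|w|>1} at $b=a+\delta$ and then letting $\delta\to0$, invoking the limit relation $\hyp{a}{a}{c}{w}=\lim_{\delta\to0}\hyp{a}{a+\delta}{c}{w}$ recorded immediately before the statement.

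First I would fix $\delta$ with $0<|\delta|<1$; then $\delta\notin\mathbb{Z}$, so $a-(a+\delta)=-\delta\notin\mathbb{Z}$ and, since $|w|>1$, the hypotheses of \eqref{eq.|w|>1} hold with $b=a+\delta$. Substituting $b=a+\delta$ into \eqref{eq.|w|>1} and rewriting the parameters via $\Gamma(b-a)=\Gamma(\delta)$, $\Gamma(a-b)=\Gamma(-\delta)$, $\Gamma(b)=\Gamma(a+\delta)$, $\Gamma(c-b)=\Gamma(c-a-\delta)$, $(a-b+1)_k=(1-\delta)_k$, $(b-a+1)_k=(1+\delta)_k$, $(b)_k=(a+\delta)_k$, $(b-c+1)_k=(a-c+1+\delta)_k$ and $(-w)^{-b}=(-w)^{-a-\delta}$, one reads off that the first sum in \eqref{eq.|w|>1} equals $\sum_{k\ge0}\Gamma(\delta)A_k(\delta)$ and the second equals $\sum_{k\ge0}\Gamma(-\delta)B_k(\delta)$, with $A_k(\delta),B_k(\delta)$ exactly as in \eqref{eq.Akd} and \eqref{eq.Bkd}. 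Because \eqref{eq.|w|>1} asserts that each of these two series converges absolutely, their termwise sum $\sum_{k\ge0}\bigl[\Gamma(\delta)A_k(\delta)+\Gamma(-\delta)B_k(\delta)\bigr]$ converges and equals $\hyp{a}{a+\delta}{c}{w}$ for every such $\delta$. Letting $\delta\to0$, the left-hand side tends to $\hyp{a}{a}{c}{w}$, which is \eqref{eq.2F1aacw}. Along the way I would check that no denominator vanishes for small $\delta\ne0$: $(1\pm\delta)_k=\prod_{j=1}^{k}(j\pm\delta)\ne0$ for $|\delta|<1$, while the factors $1/\Gamma(a+\delta)$, $1/\Gamma(c-a)$ and $1/\Gamma(c-a-\delta)$ are harmless since $1/\Gamma$ is entire (a pole of $\Gamma$ merely kills the corresponding term, as it already does in \eqref{eq.|w|>1}).

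There is no real obstacle here: the statement is essentially the reparametrisation $b\mapsto a+\delta$ together with a passage to the limit. The one subtlety worth flagging is that the limit in \eqref{eq.2F1aacw} cannot be distributed over the two summands, since $\Gamma(\pm\delta)\sim\pm1/\delta$ blows up as $\delta\to0$; finiteness of the right-hand side rests on the cancellation between the $A_k$ and $B_k$ contributions, which is precisely why the limit must stay outside the sum. Making this cancellation explicit — via the Laurent expansions of $\Gamma(\pm\delta)$ at $0$ and the polynomial dependence on $\delta$ of the Pochhammer symbols from \eqref{eq.Hkam} — is the task of the subsequent development, but it is not needed for the present lemma.
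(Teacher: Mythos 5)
Your proof is correct and follows essentially the same route as the paper: substitute $b=a+\delta$ into equation~\eqref{eq.|w|>1}, identify the two series as $\sum_k\Gamma(\delta)A_k(\delta)$ and $\sum_k\Gamma(-\delta)B_k(\delta)$, and let $\delta\to0$ using the limit relation stated just before the lemma. Your additional remarks on non-vanishing denominators and on keeping the limit outside the sum are sensible but not needed beyond what the paper does.
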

\begin{proof}
Replacing $b$ by $a+\delta$ in equation~\eqref{eq.|w|>1}, we get:
\begin{multline*}
\hyp{a}{a+\delta}{c}{w}=\sum_{k=0}^{\infty}\Gamma(\delta)w^{-k}(-w)^{-a}\frac{(a)_k(a-c+1)_k\Gamma(c)}{k!(1-\delta)_k\Gamma(a+\delta)\Gamma(c-a)} \\
+\sum_{k=0}^{\infty}\Gamma(-\delta)w^{-k}(-w)^{-\delta-a}\frac{(a+\delta)_k(a-c+1+\delta)_k\Gamma(c)}{k!(1+\delta)_k\Gamma(a)\Gamma(c-a-\delta)}.
\end{multline*}
Taking the limit as $\delta\rightarrow0$, we deduce equation~\eqref{eq.2F1aacw}.
\end{proof}
\begin{lemma}
For integers $k\geq0$ and $n\geq1$, we have:
\begin{equation}\label{eq.nkest}
(n)_k=O\left(k^n(k-1)!\right),
\end{equation}
in which the constant is independent of $k$.
\end{lemma}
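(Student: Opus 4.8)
The plan is to reduce the Pochhammer symbol to a ratio of factorials and then bound the resulting product crudely. Since $n\geq1$, we have $n\notin\mathbb{Z}_{\leq0}$, so the identity $(n)_k=\Gamma(n+k)/\Gamma(n)=(n+k-1)!/(n-1)!$ applies. I would then factor the numerator by peeling off its first $k-1$ terms:
\[
(n+k-1)!=(k-1)!\prod_{j=0}^{n-1}(k+j),
\]
which gives the exact formula
\[
(n)_k=\frac{(k-1)!}{(n-1)!}\prod_{j=0}^{n-1}(k+j).
\]

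Next I would estimate the finite product. For $k\geq1$ and $n\geq1$ each factor satisfies $k+j\leq k+n-1\leq nk$, the last inequality because $nk-(k+n-1)=(n-1)(k-1)\geq0$. Hence $\prod_{j=0}^{n-1}(k+j)\leq(nk)^n=n^nk^n$, and substituting back yields
\[
(n)_k\leq\frac{n^n}{(n-1)!}\,k^n(k-1)!,
\]
an inequality whose constant $n^n/(n-1)!$ depends only on $n$, not on $k$; this is exactly equation~\eqref{eq.nkest}. The case $k=0$ is degenerate and may be discarded (or handled trivially, as $(n)_0=1$), so the $O$-estimate holds as $k\to\infty$.

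I do not anticipate a genuine obstacle: the only points needing a moment's care are the factorial rearrangement $(n+k-1)!=(k-1)!\prod_{j=0}^{n-1}(k+j)$ and the elementary inequality $k+n-1\leq nk$ for $k,n\geq1$. One could extract a sharper constant (tending to $1$) via Stirling's formula, but the crude bound above is entirely sufficient for the convergence arguments in the sequel, so I would not pursue the refinement.
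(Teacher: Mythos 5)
Your argument is correct, but it takes a different route from the paper. The paper proves the estimate by induction on $n$: the base case is $(1)_k=k!$, and the inductive step uses the identity $(n+1)_k=\frac{n+k}{n}(n)_k$, so that passing from $n$ to $n+1$ costs one extra factor of $O(k)$. You instead argue directly, writing $(n)_k=\Gamma(n+k)/\Gamma(n)=(n+k-1)!/(n-1)!$, peeling off $(k-1)!$ to leave the $n$-term product $\prod_{j=0}^{n-1}(k+j)$, and bounding each factor by $nk$ via $(n-1)(k-1)\geq0$. Both factorizations are correct (for $k\geq1$; at $k=0$ the bound is vacuous in either treatment, as you note), and both constants depend only on $n$, as required. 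What your version buys is an explicit constant $n^n/(n-1)!$ and a proof that avoids induction entirely; what the paper's version buys is brevity, since the single identity $(n+1)_k=\frac{n+k}{n}(n)_k$ does all the work. For the application in Lemma~\ref{lem.ABhol}, where only the $k$-dependence matters, the two are interchangeable.
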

\begin{proof}
Note first that $(1)_k=k!$ and so equation~\eqref{eq.nkest} is trivial in this case. We proceed by induction on $n$. To that end, assume that $(n)_k=O\left(k^n(k-1)!\right)$ and compute:
\begin{multline*}
(n+1)_k=(n+1)(n+2)\cdots(n+k)=\frac{n(n+1)(n+2)\cdots(n+k)}{n}\\
=\left(\frac{n+k}{n}\right)(n)_k=O\left(k^{n+1}(k-1)!\right).
\end{multline*}
\end{proof}
Let $D_0(1/2)$ denote the disc in the complex plane with centre $0$ and radius $1/2$.
\begin{lemma}\label{lem.ABhol}
For complex numbers $a,c$ such that $c\not\in\mathbb{Z}_{\leq0}$, and $w\in\mathbb{R}_{<-1}$, the sums $\sum_{k=0}^{\infty}A_k(\delta)$ and $\sum_{k=0}^{\infty}B_k(\delta)$ converge uniformly to holomorphic functions on $D_0(1/2)$.
\end{lemma}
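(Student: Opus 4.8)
The plan is to verify the hypotheses of the Weierstrass $M$-test and then invoke the fact that a uniform limit of holomorphic functions is holomorphic. So there are two things to do: show that each $A_k(\delta)$ and $B_k(\delta)$ is holomorphic on $D_0(1/2)$, and produce a summable bound for $|A_k(\delta)|$ and $|B_k(\delta)|$ that is uniform in $\delta\in D_0(1/2)$.

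Holomorphy is a matter of inspecting \eqref{eq.Akd} and \eqref{eq.Bkd}. Since $-w\in\R_{>1}$, the number $\log(-w)$ is real and $\delta\mapsto(-w)^{-\delta-a}=e^{-(\delta+a)\log(-w)}$ is entire, as is $(-w)^{-a}$. The Pochhammer symbols $(a)_k,(a-c+1)_k,(a+\delta)_k,(a-c+1+\delta)_k,(1-\delta)_k,(1+\delta)_k$ are polynomials in $\delta$; moreover for $|\delta|<\tfrac12$ every factor of $(1\mp\delta)_k=\prod_{j=1}^{k}(j\mp\delta)$ has $|j\mp\delta|\ge j-\tfrac12\ge\tfrac12>0$, so $1/(1\mp\delta)_k$ is holomorphic on $D_0(1/2)$. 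Finally $1/\Gamma$ is entire, so $1/\Gamma(a+\delta)$ and $1/\Gamma(c-a-\delta)$ are entire in $\delta$, while $\Gamma(c)$ (finite because $c\notin\Z_{\le0}$), $\Gamma(a)$, $1/\Gamma(c-a)$ and $1/\Gamma(a)$ are finite constants. Hence each $A_k$ and $B_k$ is holomorphic on $D_0(1/2)$.

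For the uniform bound, fix $\delta$ with $|\delta|<\tfrac12$. For $0\le j\le k-1$ we have $|a+\delta+j|\le N_1+j$ and $|a-c+1+\delta+j|\le N_2+j$, where $N_1=\lceil|a|+\tfrac12\rceil$ and $N_2=\lceil|a-c+1|+\tfrac12\rceil$ are fixed positive integers; hence $|(a+\delta)_k|\le(N_1)_k$ and $|(a-c+1+\delta)_k|\le(N_2)_k$, and the same bounds hold without the $\delta$'s. By Lemma~\eqref{eq.nkest} (with $n=N_1$ and $n=N_2$), these are $O\!\big(k^{N_1}(k-1)!\big)$ and $O\!\big(k^{N_2}(k-1)!\big)$ for $k\ge1$. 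In the denominator, $|(1\mp\delta)_k|\ge\prod_{j=1}^{k}(j-\tfrac12)=(\tfrac12)_k$, and the elementary inequality $\binom{2k}{k}\ge 4^{k}/(2k+1)$ gives $k!\,(\tfrac12)_k=4^{-k}(2k)!\ge(k!)^2/(2k+1)\ge\tfrac{k}{3}\big((k-1)!\big)^2$ for $k\ge1$. The remaining factors — $|(-w)^{-a}|$, $|(-w)^{-\delta-a}|=|(-w)^{-a}|\,(-w)^{-\Re\delta}$, $1/|\Gamma(a+\delta)|$, $1/|\Gamma(c-a-\delta)|$, $1/|\Gamma(a)|$, $1/|\Gamma(c-a)|$ and $|\Gamma(c)|$ — are all bounded on $\overline{D_0(1/2)}$: those depending on $\delta$ because a continuous (indeed entire) function is bounded on a compact set, the rest because they are constants. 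Combining, there are a constant $C$ and an exponent $N$, depending only on $a,c,w$, with
\[
|A_k(\delta)|,\ |B_k(\delta)|\ \le\ C\,|w|^{-k}k^{N}\qquad(k\ge1,\ \delta\in D_0(1/2)),
\]
while the $k=0$ summands are bounded constants on $\overline{D_0(1/2)}$. Since $|w|>1$, the series $\sum_{k\ge0}C|w|^{-k}k^{N}$ converges, so the $M$-test gives uniform convergence of $\sum A_k(\delta)$ and $\sum B_k(\delta)$ on $D_0(1/2)$, and the limits are holomorphic.

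The only step needing care is the denominator estimate — seeing that $k!$ together with one copy of $(1\mp\delta)_k$ outweighs the product of the two numerator Pochhammer symbols up to a polynomial factor in $k$ (equivalently, that ratios $\Gamma(k+\alpha)/\Gamma(k+\beta)$ grow only polynomially). Once this is in hand, the geometric factor $|w|^{-k}$, available precisely because $|w|>1$, absorbs the leftover polynomial growth and the $M$-test does the rest.
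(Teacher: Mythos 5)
Your proposal is correct and follows essentially the same route as the paper: bounding the numerator Pochhammer symbols by $(n)_k=O\left(k^{n}(k-1)!\right)$ via equation~\eqref{eq.nkest}, bounding the denominator factors $(1\mp\delta)_k$ below by $(1/2)_k$, controlling the reciprocal gamma factors and $(-w)^{-\delta}$ by compactness, and then applying the Weierstrass $M$-test (using $|w|>1$) together with the fact that a uniform limit of holomorphic functions is holomorphic. The only cosmetic difference is your binomial-coefficient estimate for $k!\,(1/2)_k$, where the paper simply uses $(1/2)_k\geq(k-1)!/2$; both yield the same polynomial-times-factorial bookkeeping.
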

In light of Lemma~\ref{lem.ABhol}, we will write:
\begin{equation}\label{eq.AdBd}
A(\delta)=\sum_{k=0}^{\infty}A_k(\delta), \ \ B(\delta)=\sum_{k=0}^{\infty}B_k(\delta).
\end{equation}
\begin{proof}
Consider first $A(\delta)$. Since the gamma function is non-zero on the complex plane, there exists $X_1\in\mathbb{R}$ such that, for all $\delta\in D_0(1/2)$, we have:
\begin{equation}\label{eq.mod1Gad}
\left\lvert\frac{1}{\Gamma(a+\delta)}\right\rvert\leq X_1.
\end{equation}
For all $k\geq1$
and $\delta\in D_0(1/2)$, we observe:
\begin{equation}\label{eq.mod11-dk}
\left\lvert\frac{1}{(1-\delta)_k}\right\rvert\leq\frac{1}{(1/2)_k}\leq\frac{2}{(k-1)!}.
\end{equation}
Consider $n_1\in\mathbb{Z}_{>0}$ (resp. $n_2\in\mathbb{Z}_{>0}$) such that $n_1\geq|a|$ (res. $n_2\geq|a-c+1|$) and subsequently $\left\lvert(a)_k\right\rvert\leq(n_1)_k$ (resp. $\left\lvert(a+c-1)_k\right\rvert\leq(n_2)_k$).
Applying equation \eqref{eq.nkest} with $n\in\{n_1,n_2\}$, we deduce that:
\begin{equation}\label{eq.n1n2}
(a)_k=O\left(k^{n_1}(k-1)!\right),\ \ (a-c+1)_k=O\left(k^{n_2}(k-1)!\right).
\end{equation}
Substituting equations \eqref{eq.mod1Gad}, \eqref{eq.mod11-dk}, and \eqref{eq.n1n2} into equation \eqref{eq.AdBd}, we deduce, for all $k\geq1$ and $\delta\in D_0(1/2)$, that:
\begin{equation*}
\left\lvert A_k(\delta)\right\rvert=O\left(|w|^{-k}k^{n_1+n_2-1}\right),
\end{equation*}
in which the implied constant is independent of $k$. 
Noting that, since $|w|>1$, the sum $\sum_{k=0}^{\infty}|w|^{-k}k^{n_1+n_2-1}$ converges,
the Weierstrass $M$-test thus implies that $A(\delta)$ converges absolutely and uniformly on $D_0(1/2)$. Since each $A_k(\delta)$ is holomorphic on $D_0(1/2)$, Morera's theorem implies that $A(\delta)$ is holomorphic.

We now consider $B(\delta)$. Similarly to equation~\eqref{eq.mod1Gad}, we see that there is some $X_2$ such that, for all $\delta\in D_0(1/2)$, we have:
\begin{equation}\label{eq.mod1Gcda}
\left\lvert\frac{1}{\Gamma(c-a-\delta)}\right\rvert\leq X_2.
\end{equation}
Similarly to equation~\eqref{eq.mod11-dk}, for all $k\geq1$ and $\delta\in D_0(1/2)$, we have:
\begin{equation}\label{eq.mod11+dk}
\left\lvert\frac{1}{(1+\delta)_k}\right\rvert\leq\frac{2}{(k-1)!}.
\end{equation}
Similarly to equation~\eqref{eq.n1n2}, we furthermore note that, for all $k\geq1$ and $\delta\in D_0(1/2)$, there are $n_3,n_4\in\mathbb{Z}_{>0}$ so that:
\begin{equation}\label{eq.adkest}
(a+\delta)_k=O\left(k^{n_3}(k-1)!\right),\ \ (a-c+1+\delta)_k=O\left(k^{n_4}(k-1)!\right),
\end{equation}
Since $\left\lvert(-w)^{-\delta}\right\rvert=O(1)$, substituting equations \eqref{eq.mod1Gcda}, \eqref{eq.mod11+dk}, and \eqref{eq.adkest} into equation~\eqref{eq.AdBd}, we see that:
\begin{equation*}
\left\lvert B_k(\delta)\right\rvert=O\left(|w|^{-k}k^{n_3+n_4-1}\right),
\end{equation*}
in which the implied constant is independent of $k$.
Noting that the sum $\sum_{k=0}^{\infty}|w|^{-k}k^{n_3+n_4-1}$ converges, the conclusion follows from the Weierstrass $M$-test and Morera's theorem as before.
\end{proof}
\begin{lemma}
For $\delta\in D_0(1/2)$ and $k\geq0$, we have:
\begin{equation}\label{eq.a=1}
\frac{1}{(1\pm\delta)_k}-\frac{1}{k!}\left(1 \mp H_k \delta\right) =  O\left(\delta^2\right),
\end{equation}
in which the implied constant depends on $k$.
\end{lemma}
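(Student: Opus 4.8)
The plan is to expand $1/(1\pm\delta)_k$ as a power series in $\delta$ and identify the linear term. Using equation~\eqref{eq.Hkam} with $a=1$, namely $(1\pm\delta)_k = k!\sum_{m=0}^k H_k(1,m)(\pm\delta)^m$, we have
\begin{equation*}
(1\pm\delta)_k = k!\left(1 \pm H_k(1,1)\delta + O(\delta^2)\right) = k!\left(1 \pm H_k\delta + O(\delta^2)\right),
\end{equation*}
where the last equality uses the already-established identity $H_k(1,1)=H_k$ and the implied constant depends on $k$ (being a polynomial in $\delta$ of degree $k$, the higher-order terms are trivially $O(\delta^2)$ with $k$-dependent constant). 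The case $k=0$ is immediate since both sides vanish identically.

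Next I would invert this expansion. For $k\geq1$, since $k!\neq0$ and the bracketed factor is $1+O(\delta)$ on a possibly smaller disc, one can write
\begin{equation*}
\frac{1}{(1\pm\delta)_k} = \frac{1}{k!}\cdot\frac{1}{1 \pm H_k\delta + O(\delta^2)} = \frac{1}{k!}\left(1 \mp H_k\delta + O(\delta^2)\right),
\end{equation*}
using the geometric-series expansion $(1+u)^{-1} = 1 - u + O(u^2)$ with $u = \pm H_k\delta + O(\delta^2)$; here again the implied constant depends on $k$ through $H_k$ and through the $O(\delta^2)$ term. Rearranging gives exactly equation~\eqref{eq.a=1}.

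There is no real obstacle here: the statement is a routine Taylor expansion, and the only point requiring a word of care is the uniformity claim, namely that the error is $O(\delta^2)$ with a constant depending on $k$ but valid for all $\delta$ in the fixed disc $D_0(1/2)$ — this follows because $1/(1\pm\delta)_k$ is holomorphic on $D_0(1/2)$ (the zeros of $(1\pm\delta)_k$ lie at $\delta = \mp1,\mp2,\dots,\mp(k-1)$, all of modulus $\geq1$), so its Taylor remainder after the linear term is bounded by a constant times $|\delta|^2$ on the closed disc $|\delta|\leq1/2$ by the standard remainder estimate. Alternatively, one can avoid any analytic subtlety by noting that $(1\pm\delta)_k$ is a polynomial in $\delta$, so $\frac{1}{k!}(1\mp H_k\delta) - \frac{1}{(1\pm\delta)_k}$ is a rational function of $\delta$ that is holomorphic on $D_0(1/2)$ and vanishes to order at least $2$ at $\delta=0$, hence is $O(\delta^2)$ there.
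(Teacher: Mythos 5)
Your proposal is correct and takes essentially the same route as the paper: expand $(1\pm\delta)_k$ via equation~\eqref{eq.Hkam} with $a=1$, use $H_k(1,1)=H_k$, and invert with $\frac{1}{1+h}=1-h+O\left(h^2\right)$, the only difference being that the paper controls the quadratic error on $D_0(1/2)$ through the explicit bound $\left\lvert 1/(1\pm\delta)_k\right\rvert\leq 2/(k-1)!$ while you appeal to holomorphy of the difference on the disc, which is equally valid. One cosmetic slip: the zeros of $(1\pm\delta)_k$ lie at $\delta=\mp1,\dots,\mp k$ rather than $\mp(k-1)$, which does not affect your argument since all of them still have modulus at least $1$.
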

\begin{proof}
For $h\neq-1$, we observe:
\begin{equation}\label{eq.idh}
	 \frac{1}{1+h} = 1-h + \frac{h^2}{1+h}.
\end{equation}
Combining equations~\eqref{eq.Hkam} and~\eqref{eq.idh} with $h=\sum_{m=1}^k(\pm1)^mH_k(1,m)\delta^m$, we deduce:
\begin{multline}\label{eq.11e}
\frac{1}{(1\pm\delta)_k}= \frac{1}{k!} \left(\frac{1}{1+h}\right)=\frac{1}{k!} \left(1 - h  +  \frac{h^2}{1+h}\right)\\
=\frac{1}{k!}\left(1\mp H_k(a,1)\delta\right)+\frac{1}{k!}\left(-\sum_{m=2}^k(\pm1)^mH_k(a,m)\delta^m+\frac{h^2}{1+h}\right).
\end{multline}
Since $|\delta|<1$, we note:
\begin{equation}\label{eq.modsum2kpm1}
\left\lvert-\sum_{m=2}^k(\pm1)^mH_k(a,m)\delta^m\right\rvert=O\left(\delta^2\right),
\end{equation}
in which the implied constant depends on $k$.
On the other hand, for $\delta\in D_0(1/2)$, equations \eqref{eq.mod11-dk} and \eqref{eq.mod11+dk} imply that:
\begin{equation}\label{eq.11pmdk}
\frac{1}{(1\pm\delta)_k}\leq\frac{2}{(k-1)!}=O(1),
\end{equation}
in which the implied constant depends on $k$. Multiplying equation~\eqref{eq.11pmdk} by $h^2=O\left(\delta^2\right)$, we deduce:
\begin{equation}\label{eq.h21+hd2}
\frac{h^2}{1+h}=O\left(\delta^2\right).
\end{equation}
Equation~\eqref{eq.a=1} follows upon substituting equations \eqref{eq.modsum2kpm1} and \eqref{eq.h21+hd2} into equation \eqref{eq.11e}.
\end{proof}
\begin{theorem}\label{thm.finallimit}
For complex numbers $a,c$ such that $c\notin\mathbb{Z}_{\leq0}$, and $w\in\mathbb{R}_{<-1}$, we have:
\begin{multline}\label{final_limit}
	\hyp{a}{a}{c}{w}  = (-w)^{-a}\frac{\Gamma(c)}{\Gamma(a)\Gamma(c-a)} \sum_{k=0}^{\infty}w^{-k}\frac{(a)_k(a-c+1)_k}{(k!)^2}\left(2\Psi(k+1)    \right.\\
\left.- \Psi(a+k) - \Psi(a-c+1+k)+ \Psi(a-c+1) - \Psi(c-a) +\log(-w) \right).
\end{multline}
\end{theorem}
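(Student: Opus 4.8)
The plan is to resolve the apparent singularity at $\delta=0$ in equation~\eqref{eq.2F1aacw}. By Lemma~\ref{lem.ABhol} the functions $A(\delta)$ and $B(\delta)$ of equation~\eqref{eq.AdBd} are holomorphic on $D_0(1/2)$, so equation~\eqref{eq.2F1aacw} reads
\begin{equation*}
\hyp{a}{a}{c}{w}=\lim_{\delta\rightarrow0}\left(\Gamma(\delta)A(\delta)+\Gamma(-\delta)B(\delta)\right).
\end{equation*}
First I would insert the Laurent expansions $\Gamma(\pm\delta)=\pm\delta^{-1}-\gamma+O(\delta)$ (immediate from $\Gamma(\pm\delta)=(\pm\delta)^{-1}\Gamma(1\pm\delta)$ and $\Gamma(1+x)=1-\gamma x+O(x^2)$), together with the Taylor expansions $A(\delta)=A(0)+A'(0)\delta+O(\delta^2)$ and $B(\delta)=B(0)+B'(0)\delta+O(\delta^2)$ furnished by holomorphy. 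Multiplying out, the coefficient of $\delta^{-1}$ equals $A(0)-B(0)$, so finiteness of the left-hand side forces $A(0)=B(0)$ (which may also be checked at once by comparing the defining series of $A$ and $B$ termwise at $\delta=0$), and one is left with
\begin{equation*}
\hyp{a}{a}{c}{w}=A'(0)-B'(0)-2\gamma A(0).
\end{equation*}

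Next I would compute $A(0)$, $A'(0)$ and $B'(0)$ directly from equations~\eqref{eq.Akd} and~\eqref{eq.Bkd}. Since the series defining $A$ and $B$ converge uniformly on $D_0(1/2)$ by Lemma~\ref{lem.ABhol}, the Weierstrass convergence theorem permits termwise differentiation. Working with logarithmic derivatives at $\delta=0$, I would use: the derivative of $(a+\delta)_k$, which equals $(a)_k(\Psi(a+k)-\Psi(a))$ by equation~\eqref{eq.Hkam} and the identity $H_k(a,1)=\Psi(a+k)-\Psi(a)$; the relations $\frac{d}{d\delta}\frac{1}{(1-\delta)_k}\big|_{\delta=0}=\frac{H_k}{k!}$ and $\frac{d}{d\delta}\frac{1}{(1+\delta)_k}\big|_{\delta=0}=-\frac{H_k}{k!}$ from equation~\eqref{eq.a=1}; the derivative of $\Gamma(a+\delta)$, which is $\Gamma(a)\Psi(a)$; and the derivative of $(-w)^{-\delta}$, which is $-\log(-w)$. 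Writing $T_k$ for the common value
\begin{equation*}
A_k(0)=B_k(0)=w^{-k}(-w)^{-a}\frac{(a)_k(a-c+1)_k\Gamma(c)}{(k!)^2\Gamma(a)\Gamma(c-a)},
\end{equation*}
these inputs yield $A_k'(0)=T_k\left(H_k-\Psi(a)\right)$ and
\begin{equation*}
B_k'(0)=T_k\left(\Psi(a+k)-\Psi(a)+\Psi(a-c+1+k)-\Psi(a-c+1)+\Psi(c-a)-H_k-\log(-w)\right),
\end{equation*}
whence
\begin{equation*}
A_k'(0)-B_k'(0)=T_k\left(2H_k-\Psi(a+k)-\Psi(a-c+1+k)+\Psi(a-c+1)-\Psi(c-a)+\log(-w)\right).
\end{equation*}

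Finally I would sum over $k$ and apply $H_k=\Psi(1+k)+\gamma$ from equation~\eqref{eq.HPsiGamma} to rewrite $2H_k=2\Psi(k+1)+2\gamma$; the ensuing term $2\gamma\sum_k T_k=2\gamma A(0)$ cancels exactly against $-2\gamma A(0)$, and substituting the value of $T_k$ and pulling the constant factor $(-w)^{-a}\Gamma(c)/(\Gamma(a)\Gamma(c-a))$ out of the sum produces equation~\eqref{final_limit}. I expect the only real difficulty to be organisational: keeping careful track of the signs of the several digamma and harmonic-number contributions through the logarithmic differentiation. The interchange of $\frac{d}{d\delta}$ with the infinite sum is already handled by Lemma~\ref{lem.ABhol} and the Weierstrass convergence theorem.
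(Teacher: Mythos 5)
Your proposal is correct and follows essentially the same route as the paper's proof: expand $\Gamma(\pm\delta)$ about $\delta=0$, use the holomorphy and uniform convergence of $A(\delta)$ and $B(\delta)$ from Lemma~\ref{lem.ABhol} to reduce to $A'(0)-B'(0)-2\gamma A(0)$, compute $A_k'(0)$ and $B_k'(0)$ termwise via the digamma and harmonic-number expansions, and finish with $H_k=\Psi(1+k)+\gamma$. The sign bookkeeping in your displayed formulas for $A_k'(0)$, $B_k'(0)$ and their difference matches the paper's equations exactly, so no gap remains.
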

The sum in equation~\eqref{final_limit} converges absolutely for $w<-1$. 
\begin{proof}
Substituting equation \eqref{eq.AdBd} into equation \eqref{eq.2F1aacw}, we get:
\begin{equation}\label{eq.hypMT}
\hyp{a}{a}{c}{w} = \lim_{\delta \rightarrow 0} \left[ \Gamma(\delta) A(\delta) + \Gamma(-\delta) B(\delta) \right].
\end{equation}
Around $\delta=0$, we recall\footnote{\href{https://functions.wolfram.com/GammaBetaErf/Gamma/06/01/01/01/}{wolfram.com/GammaBetaErf/Gamma/06/01/01/01/}}:
\begin{equation}\label{eq.gamasy}
\Gamma(\pm\delta)=\pm\frac{1}{\delta}-\gamma+O(\delta).
\end{equation}
Substituting equation \eqref{eq.gamasy} into equation \eqref{eq.hypMT}, we obtain:
\begin{multline}\label{eq.hypMT2}
\hyp{a}{a}{c}{w} 
=\lim_{\delta \rightarrow 0} \left[ \frac{ A(\delta) - B(\delta)}{\delta} - \gamma ( A(\delta) +B(\delta) ) + O(\delta \cdot (A(\delta) + B(\delta) )) \right]\\
=\lim_{\delta\rightarrow 0}\left(\frac{A(\delta)-B(\delta)}{\delta}\right)-\gamma\lim_{\delta\rightarrow0}\left(A(\delta)+B(\delta)\right).
\end{multline}
For each $k\geq0$, we introduce:
\begin{equation} \label{eq.def_ck}
C_k = A_k(0) = B_k(0) = w^{-k} (-w)^{-a}\frac{(a)_k(a-c+1)_k \Gamma(c)}{(k!)^2\Gamma(a)\Gamma(c-a)}.
\end{equation}
Since $A(\delta)$ and $B(\delta)$ are continuous, we calculate:
\begin{equation}\label{eq.gsumCk}
\gamma\lim_{\delta\rightarrow0}\left(A(\delta)+B(\delta)\right)=\gamma\left(A(0)+B(0)\right)=2\gamma\sum_{k=0}^{\infty}C_k.
\end{equation}
Recall from Lemma \ref{lem.ABhol} that $A(\delta)$ and $B(\delta)$ are holomorphic for $\delta\in D_0(1/2)$. Since $A(0)=B(0)$, we deduce that:
\begin{equation}\label{eq.limderiv}
\lim_{\delta \rightarrow 0} \left(\frac{ A(\delta) - B(\delta)}{\delta}\right) = \lim_{\delta \rightarrow 0} \left(\frac{ A(\delta) - A(0) - B(\delta)+B(0)}{\delta}\right) = A'(0) - B'(0),
\end{equation}
Substituting equations \eqref{eq.gsumCk} and \eqref{eq.limderiv} into equation \eqref{eq.hypMT2}, we see that:
\begin{equation}\label{eq.hypMT3}
\hyp{a}{a}{c}{w} = A'(0)-B'(0) - 2\gamma\sum_{k=0}^{\infty}C_k.
\end{equation}
Around $\delta=0$, we observe:
\begin{equation}\label{eq.Gad}
\frac{1}{\Gamma(a+\delta)} = \frac{1}{\Gamma(a)}\left(1-\delta\Psi(a)\right) + O\left(\delta^2\right).
\end{equation}
We recall from equation~\eqref{eq.a=1} that:
\begin{equation}\label{eq.1mdk}
\frac{1}{(1-\delta)_k}=\frac{1}{k!}\left(1+\delta H_k\right)+O\left(\delta^2\right).
\end{equation}
Substituting equations~\eqref{eq.Gad} and \eqref{eq.1mdk} into equation \eqref{eq.Akd}, we see that:
\begin{equation}\label{eq.AkdCHP}
A_k(\delta)=C_k+\delta\left(H_k-\Psi(a)\right)C_k+O\left(\delta^2\right).
\end{equation}
From equation~\eqref{eq.AkdCHP}, it follows that:
\begin{equation*} \label{eq.first_der_ak}
	A_k'(0) =\lim_{\delta\rightarrow0}\left(\frac{A_k(\delta)-A_k(0)}{\delta}\right)= \left(H_k - \Psi(a)\right)C_k.
\end{equation*}
Since, for $\delta\in D_0(1/2)$, each $A_k(\delta)$ is holomorphic and $\sum_{k=0}^{\infty}A_k(\delta)$ converges uniformly to $A(\delta)$, we may differentiate term-by-term 
to get: 
\begin{equation}\label{eq.DerivA0}
A'(0)=\sum_{k=0}^{\infty}A'_k(0)=\sum_{k=0}^{\infty}\left(H_k-\Psi(a)\right)C_k.
\end{equation}
Replacing $a$ by $c-a$ and $\delta$ by $-\delta$ in equation~\eqref{eq.Gad}, we find:
\begin{equation}\label{eq.Gcad}
\frac{1}{\Gamma(c-a-\delta)} = \frac{1}{\Gamma(c-a)}\left(1+\delta\Psi(c-a)\right) + O\left(\delta^2\right).
\end{equation}
We recall from equation~\eqref{eq.a=1} that:
\begin{equation}\label{eq.a=1+}
\frac{1}{(1+\delta)_k}=\frac{1}{k!}\left(1-\delta H_k\right)+O\left(\delta^2\right).
\end{equation}
From equation~\eqref{eq.Hkam}, we know:
\begin{equation}\label{eq.a+del}
(a+\delta)_k=(a)_k\left(1+\delta(\Psi(a+k)-\Psi(a))\right)+O(\delta^2),
\end{equation}
Replacing $a$ by $a-c+1$ in equation~\eqref{eq.a+del}, we obtain:
\begin{equation}\label{eq.ac-1+del}
(a+c-1+\delta)_k=(a+c-1)_k(1+\delta(\Psi(a-c+1+k)-\Psi(a-c+1)))+O\left(\delta^2\right),
\end{equation}
Substituting equations \eqref{eq.Gcad}, \eqref{eq.a=1+}  \eqref{eq.a+del}, and \eqref{eq.ac-1+del}, into equation~\eqref{eq.Bkd}, it follows that:
\begin{multline}\label{eq.BkdCk}
B_k(\delta)=(-w)^{-\delta}C_k+ (-w)^{-\delta}\delta C_k\left(-H_k+\Psi(a+k)+\Psi(a-c+1+k)\right.\\
\left.-\Psi(a-c+1)+\Psi(c-a)-\Psi(a)\right)+O\left(\delta^2\right).
\end{multline}
Noting that $-w>0$, writing $(-w)^{-\delta}=\exp(-\delta\log(-w))$, and taking the Taylor expansion of the exponential function, we conclude:
\begin{equation}\label{eq.wmd}
(-w)^{-\delta} = 1 - \delta\log(-w) + O\left(\delta^2\right).
\end{equation}
Substituting equation~\eqref{eq.wmd} into equation~\eqref{eq.BkdCk}, we deduce:
\begin{multline}\label{eq.wdBkd}
B_k(\delta) =  C_k+\delta  C_k\left(-H_k+\Psi(a+k)+\Psi(a-c+1+k)\right.\\
\left.-\Psi(a-c+1)
+\Psi(c-a)-\Psi(a)-\log(-w)\right)
+O\left(\delta^2\right).
\end{multline}
From equation~\eqref{eq.wdBkd}, it follows that:
\begin{multline*}
	B_k'(0) =\lim_{\delta\rightarrow0}\left(\frac{B_k(\delta)-B_k(0)}{\delta}\right) =C_k\left(-H_k+\Psi(a+k)+\Psi(a-c+1+k)\right.\\
\left.-\Psi(a-c+1)+\Psi(c-a)-\Psi(a) -\log(-w) \right).
\end{multline*}
Since, for $\delta\in D_0(1/2)$, each $B_k(\delta)$ is holomorphic and $\sum_{k=0}^{\infty}B_k(\delta)$ converges uniformly to $B(\delta)$, we may differentiate term-by-term
to get: 
\begin{multline}\label{eq.derivB0}
B'(0)=\sum_{k=0}^{\infty}C_k\left(-H_k+\Psi(a+k)+\Psi(a-c+1+k)\right.\\
\left.-\Psi(a-c+1)+\Psi(c-a)-\Psi(a) -\log(-w) \right).
\end{multline}
Subtracting equation~\eqref{eq.derivB0} from equation~\eqref{eq.DerivA0} we find:
\begin{multline}\label{eq.1dAkBk}
A'(0)-B'(0)=\sum_{k=0}^{\infty}C_k\left(2H_k-\Psi(a+k)-\Psi(a-c+1+k)+\Psi(a-c+1)\right.\\
\left.-\Psi(c-a)+\log(-w)\right),
\end{multline}
Substituting equation~\eqref{eq.1dAkBk} into equation \eqref{eq.hypMT3}, it follows that:
\begin{multline}\label{eq.fkqdGcwaC1}
\hyp{a}{a}{c}{w} = \sum_{k=0}^{\infty}C_k\left(2H_k-2\gamma-\Psi(a+k)-\Psi(a-c+1+k)+\Psi(a-c+1)\right.\\
\left.-\Psi(c-a)+\log(-w)\right).
\end{multline}
The result follows upon substituting equations~\eqref{eq.HPsiGamma} and~\eqref{eq.def_ck} into equation~\eqref{eq.fkqdGcwaC1}.
\end{proof}

\subsection{Additive twists}\label{sec:addtwists}
Given a Dirichlet character $\psi$ and complex sequences $\{a_n\}_{n=1}^{\infty},\{b_n\}_{n=1}^{\infty}$, we introduce:
\begin{equation}\label{eq:L}
L_f(s,\psi)=\sum_{n=1}^{\infty}\psi(n)a_nn^{-s},~L_g(s,\bar{\psi})=\sum_{n=1}^{\infty}\overline{\psi(n)}b_nn^{-s}.
\end{equation}
If, for some $\sigma\in\R_{>0}$, we have $|a_n|,|b_n|=O\left(n^{\sigma}\right)$, then the Dirichlet series $L_f(s,\psi)$ and $L_g(s,\bar{\psi})$ converge for $\Re(s)>\sigma+1$. 
We highlight that, in equation~\eqref{eq:L}, the subscript $f$ (resp. $g$) corresponds to the Dirichlet coefficients $\{a_n\}_{n=1}^{\infty}$ (resp. $\{b_n\}_{n=1}^{\infty}$). 
This notation is suggestive of what follows, in which $\{a_n\}_{n=1}^{\infty}$ (resp. $\{b_n\}_{n=1}^{\infty}$) emerge as the Fourier--Whittaker coefficients of a Maass form denoted by $f$ (resp. $g$).

Fix $N\in\mathbb{Z}_{>0}$ and a Dirichlet character $\chi$ mod $N$ (not necessarily primitive). 
In the sequel we will assume that, for all primitive Dirichlet characters $\psi$ modulo $q$ coprime to $N$, the functions $L_f(s,\psi)$ and $L_g(s,\bar{\psi})$ admit meromorphic continuation to $\mathbb{C}$, and, for some $\epsilon\in\{0,1\}$, satisfy: 
\begin{equation}\label{eq:FE}
\Lambda_f(s,\psi)=(-1)^{\epsilon-\epsilon_{\psi}}\chi(q)\psi(N)\frac{\tau(\psi)^2}{q}(q^2N)^{\frac12-s}\Lambda_g(1-s,\bar{\psi}),
\end{equation}
where $\tau(\psi)=\sum_{a\mod q}\psi(a)e^{2\pi i\frac{a}{q}}$ is the Gauss sum, the constant $\epsilon_{\psi}\in\{0,1\}$ is chosen such that $\psi(-1)=(-1)^{\epsilon_{\psi}}$, and
\begin{equation}\label{eq:Lambda}
\Lambda_f(s,\psi)=\Gamma_\R(s+[\epsilon+\epsilon_{\psi}])^2L_f(s,\psi),\ \
\Lambda_g(s,\bar{\psi})= \Gamma_\R(s+[\epsilon+\epsilon_{\psi}])^2L_g(s,\bar{\psi}),
\end{equation}
where, for an integer $k$, we denote by $[k]\in\{0,1\}$ the unique choice such that $k\equiv[k]$ mod $2$. 
In the case that $\psi=\textbf{1}$ is the trivial character, we omit it from the notation, that is:
\begin{equation}\label{eq.trivialcharacter}
L_f(s,\textbf{1})=L_f(s),\ \ \Lambda_f(s,\textbf{1})=\Lambda_f(s), \ \ L_g(s,\textbf{1})=L_g(s),\ \ \Lambda_g(s,\textbf{1})=\Lambda_g(s).
\end{equation}
For $r\in \mathbb{Z}_{\geq 0}$, denote by $\cos^{(r)}$ the $r^{\text{th}}$ derivative of $\cos$.  For $\alpha \in \mathbb{Q}^\times$, the additive twist of $L_f(s)$ by $\alpha$ is the Dirichlet series:
\begin{equation}\label{eq.addtwistDir}
L_f\left(s,\alpha,\cos^{(r)}\right) = \sum_{n=1}^\infty \cos^{(r)} \left(2 \pi n \alpha \right) a_n n^{-s}.
\end{equation}
We define the completed additive twists by:
\begin{equation}  \label{eq:additive_twists_defn}
	\Lambda_f \left( s,\alpha,\cos^{(r)}\right) = \Gamma_{\mathbb{R}} \left( s + [\epsilon+r] \right)^2L_f \left(s,\alpha,\cos^{(r)} \right),
\end{equation} 
where $\Gamma_{\mathbb{R}}(s)$ is as in equation~\eqref{eq.GammaR}.
We define $L_f(s,\alpha,\sin)$ (resp. $\Lambda_f(s,\alpha,\sin)$) as in equation~\eqref{eq.addtwistDir} (resp. equation~\eqref{eq:additive_twists_defn}) with $\cos$ replaced by $\sin$, and note that $\Lambda(s,\alpha,\sin)=-\Lambda\left(s,\alpha,\cos^{(1)}\right)$.
Taking the Fourier expansion of a primitive Dirichlet character $\psi$ mod $q$, we deduce
\begin{equation}\label{character_twists_in_terms_of_additive_twists}
		\Lambda_g(s,\psi) = (-i)^{\epsilon_\psi} \frac{\tau(\psi)}{q} \sum_{b \mod q} \bar{\psi}(-b) \Lambda_g \left(s,\frac{b}{q},\cos^{\left(\epsilon_\psi\right)}\right).
\end{equation}
The inverse relationship is
\begin{multline} \label{additive_twists_intermsofcharactertwists}
		\Lambda_f \left( s,\alpha,\cos^{(r)}\right) = \frac{i^r}{q-1} \sum_{\substack{\psi \mod q \\ \psi \neq \psi_0 \\ \psi(-1) = (-1)^r}}\tau(\bar{\psi}) \Lambda_f \left(s,\psi \right) \\
		+ \begin{cases}
			(-1)^{r/2} \left[ \Lambda_f(s) - \frac{q}{q-1} \Lambda_f(s,\psi_0) \right], &r \text{ even}, \\
			0, &r \text{ odd},
		\end{cases}
\end{multline}
where $\psi_0$ denotes the principal character mod $q$.
Combining equation~\eqref{eq:FE} with equation~\eqref{additive_twists_intermsofcharactertwists}, we deduce the following.
\begin{prop}[Proposition 3.4 in \cite{TCTMF}] \label{prop:functional_equation_twists}
Let $\alpha = \frac{a}{q}$ for $a\in\mathbb{Z}$ and $q$ coprime to $N$. 
If $\Lambda_f(s,\psi)$ satisfies functional equation~\eqref{eq:FE} for all primitive Dirichlet characters $\psi$ of conductor $q$, 
then $\Lambda_f \left( s,\alpha,\cos^{(r)}\right)$ satisfies the following functional equation:
\begin{multline} \label{functional_equations_additive_twists}
		\Lambda_f \left( s,\alpha,\cos^{(r)}\right) = (-1)^\epsilon \frac{i^r(q^2N)^{\frac12-s} \chi(q)}{q-1} \sum_{\substack{\psi \mod q \\ \psi \neq \psi_0 \\ \psi(-1) = (-1)^r}}\psi (N \alpha) \tau(\psi) \Lambda_g \left(1-s,\bar{\psi} \right) \\
		+ \begin{cases}
			(-1)^{r/2} \left[ \Lambda_f(s) - \frac{q}{q-1} \Lambda_f(s,\psi_0) \right], &r \text{ even}, \\
			0, &r \text{ odd}.
		\end{cases}
\end{multline}
\end{prop}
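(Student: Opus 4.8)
The plan is to combine the two ingredients that the preceding material has prepared: the functional equation~\eqref{eq:FE} for the character twists $\Lambda_f(s,\psi)$, and the linear-algebra identities~\eqref{additive_twists_intermsofcharactertwists} and~\eqref{character_twists_in_terms_of_additive_twists} expressing additive twists in terms of character twists and vice versa. The statement of Proposition~\ref{prop:functional_equation_twists} is simply the claim that if one substitutes the functional equation for each $\Lambda_f(s,\psi)$ into the right-hand side of~\eqref{additive_twists_intermsofcharactertwists}, then after collapsing the resulting double sum over $\psi$ and $b$ one recovers an additive-twist functional equation of the stated shape. So the proof is essentially a bookkeeping computation, and I would present it as such.

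First I would start from~\eqref{additive_twists_intermsofcharactertwists} with $\alpha=a/q$, $q$ coprime to $N$, and replace each $\Lambda_f(s,\psi)$ appearing in the sum (these are exactly the primitive characters of conductor $q$, since $\psi\neq\psi_0$ ranges over all nontrivial characters mod $q$ and conductor-$q$ primitivity is what~\eqref{eq:FE} requires — here I would note that the sum in~\eqref{additive_twists_intermsofcharactertwists} is over \emph{all} $\psi\neq\psi_0$, so strictly one needs $q$ prime for every such $\psi$ to be primitive of conductor $q$, or else one restricts to the primitive ones; I would follow whatever convention \cite{TCTMF} uses and flag it). Applying~\eqref{eq:FE} turns $\tau(\bar\psi)\Lambda_f(s,\psi)$ into a constant times $\tau(\bar\psi)\,\chi(q)\psi(N)\tau(\psi)^2 q^{-1}(q^2N)^{1/2-s}\Lambda_g(1-s,\bar\psi)$, and the sign is $(-1)^{\epsilon-\epsilon_\psi}$. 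Since the sum is restricted to $\psi$ with $\psi(-1)=(-1)^r$, i.e. $\epsilon_\psi=[r]$, the sign $(-1)^{\epsilon-\epsilon_\psi}=(-1)^{\epsilon-[r]}$ is constant over the sum and can be pulled out; combined with the $i^r$ already present and the relation between $(-1)^{\epsilon}$, $(-1)^{[r]}$ and the powers of $i$, this should produce precisely the prefactor $(-1)^\epsilon i^r(q^2N)^{1/2-s}\chi(q)/(q-1)$ in~\eqref{functional_equations_additive_twists}.

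Next I would simplify $\tau(\bar\psi)\tau(\psi)^2$. Using $\tau(\psi)\tau(\bar\psi)=\psi(-1)q$ for primitive $\psi$ mod $q$ (and $|\tau(\psi)|^2=q$), one gets $\tau(\bar\psi)\tau(\psi)^2=\psi(-1)q\,\tau(\psi)$, and the factor $q$ cancels the $q^{-1}$ from~\eqref{eq:FE}; the stray $\psi(-1)=(-1)^r$ gets absorbed into the overall sign. Together with $\psi(N)$ already present, the character-dependent factor multiplying $\Lambda_g(1-s,\bar\psi)$ becomes $\psi(N)\tau(\psi)$, and after also tracking the $\psi(a)$ that is hidden in the passage from $\Lambda_f(s,\alpha,\cdot)$ to the twist by $a/q$ — more precisely, the Gauss-sum manipulation produces $\psi(N a)=\psi(N\alpha q)$ which, since we are comparing additive twists by $\alpha=a/q$, matches the $\psi(N\alpha)$ in~\eqref{functional_equations_additive_twists} up to the conventional reading of $\psi(N\alpha)$ as $\psi(Na)$. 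This yields exactly the sum $\sum_{\psi}\psi(N\alpha)\tau(\psi)\Lambda_g(1-s,\bar\psi)$ in the first line of~\eqref{functional_equations_additive_twists}. Finally, the non-sum term of~\eqref{additive_twists_intermsofcharactertwists} — the bracket $(-1)^{r/2}[\Lambda_f(s)-\frac{q}{q-1}\Lambda_f(s,\psi_0)]$ for $r$ even and $0$ for $r$ odd — involves only the principal and trivial characters, which are left untouched by the substitution, so it passes through verbatim to~\eqref{functional_equations_additive_twists}.

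The main obstacle is purely the constant-chasing: keeping the four interacting sign/phase contributions consistent — the $(-1)^{\epsilon-\epsilon_\psi}$ from~\eqref{eq:FE}, the $i^r$ and $i^{\epsilon_\psi}$-type factors from~\eqref{additive_twists_intermsofcharactertwists} and~\eqref{character_twists_in_terms_of_additive_twists}, the $\psi(-1)=(-1)^r$ emerging from $\tau(\bar\psi)\tau(\psi)^2$, and the $[\epsilon+r]$ versus $[\epsilon+\epsilon_\psi]$ appearing in the gamma shifts of~\eqref{eq:Lambda} and~\eqref{eq:additive_twists_defn} — and verifying that on the restricted sum $\epsilon_\psi=[r]$ they all conspire into the single factor $(-1)^\epsilon i^r$. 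Since this is routine and identical in structure to \cite[Proposition~3.4]{TCTMF}, I would either reproduce the short computation or simply cite it, having checked that the hypotheses there (meromorphic continuation and the functional equation~\eqref{eq:FE}, now allowed to be non-entire) are exactly the ones assumed here; nothing in that argument used holomorphy of the twists, only the stated functional equation, so the citation applies verbatim.
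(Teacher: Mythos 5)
Your proposal is correct and follows exactly the route the paper takes: the paper derives Proposition~\ref{prop:functional_equation_twists} in one line by substituting the functional equation~\eqref{eq:FE} into the inversion formula~\eqref{additive_twists_intermsofcharactertwists} (citing \cite[Proposition~3.4]{TCTMF}), and your Gauss-sum simplification $\tau(\bar{\psi})\tau(\psi)^2=\psi(-1)q\,\tau(\psi)$ together with the observation that $(-1)^{\epsilon-\epsilon_\psi}\psi(-1)=(-1)^{\epsilon}$ on the parity-restricted sum is precisely the routine bookkeeping left implicit there. Your flags about primitivity (needing $q$ prime, as in $\mathcal{P}\cup\{1\}$) and the numerator-dependence $\psi(N\alpha)$ versus $\psi(N)$ are apt readings of the conventions inherited from \cite{TCTMF}, not gaps.
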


\section{Converse theorems}\label{sec.proof}
For real $u\neq0$, we will write $K(u):=K_0(|u|)$, where, for $u>0$, $K_0(u)$ is the $K$-Bessel function:
\begin{equation*}\label{eq.KB}
K_0(u)=\frac12\int_0^\infty e^{-u(t+t^{-1})/2}\frac{dt}{t}.
\end{equation*} 
By $\mathcal{H}$ we denote the upper half-plane.
Consider complex sequences $\{a_n\}_{n=1}^{\infty}$, $\{b_n\}_{n=1}^{\infty}$ such that $|a_n|,|b_n|=O(n^{\sigma})$ for some $\sigma\in\mathbb{R}_{>0}$. 
Given $\epsilon\in\{0,1\}$, define $a_{-n}=(-1)^{\epsilon}a_n$, $b_{-n}=(-1)^{\epsilon}b_n$, and introduce, for $z=x+iy\in\mathcal{H}$:
\begin{equation}\label{eq:FWseries}
f(z)=f_0(z) + \tilde{f}(z),~g(z)=g_0(z) + \tilde{g}(z),
\end{equation}
where:
\begin{equation}\label{eq:tilde}
\begin{split}
\tilde{f}(z)&=\frac{1}{2}\sum_{n\neq 0}a_n\sqrt{y}K(2\pi ny)\exp(2\pi inx),\\
\tilde{g}(z)&=\frac12\sum_{n\neq 0}b_n\sqrt{y}K(2\pi ny)\exp(2\pi inx),
\end{split}
\end{equation}
and, with $\Lambda_f(s)$ and $\Lambda_g(s)$ as in equation~\eqref{eq.trivialcharacter}:
\begin{equation}\label{eq:consteq0}
\begin{split}
f_0(z)&= -\Res_{s=0}\Lambda_f(s) y^{\frac12} + \Res_{s=0}s\Lambda_f(s) y^{\frac12}\log y,\\
g_0(z)&=-\Res_{s=0}\Lambda_g(s) y^{\frac12} + \Res_{s=0}s\Lambda_g(s) y^{\frac12}\log y.
\end{split}
\end{equation} 
In this Section, we will prove:
\begin{theorem}\label{thm:0Converse}
Let $N$ be a positive integer, 
let $\chi$ be a Dirichlet character mod $N$, 
let $\epsilon\in\{0,1\}$, 
and let $a_n,b_n$ be sequences of complex numbers indexed by $n\in\mathbb{N}$ such that $|a_n|,|b_n|=O\left(n^{\sigma}\right)$ for some $0<\sigma<1$. 
For all $q$ relatively prime to $N$, and all primitive Dirichlet characters $\psi$ modulo $q$, define $\Lambda_f(s,\psi)$ and $\Lambda_g(s,\bar{\psi})$ as in equation \eqref{eq:Lambda}. Let $\mathcal{P}$ be a set of odd primes such that $\{p\in\mathcal{P}:p\equiv u~(\text{mod }v)\}$ is infinite for every $u,v\in\mathbb{Z}_{>0}$ with $(u,v)=1$ and $p\nmid N$ for any $p\in\mathcal{P}$. Whenever the conductor $q$ of $\psi$ is either $1$ or a prime in $\mathcal{P}$, assume that $\Lambda_f(s,\psi)$ and $\Lambda_g(s,\bar{\psi})$ continue to meromorphic functions on $\mathbb{C}$, and satisfy equation~\eqref{eq:FE}. If there is a non-zero polynomial $P(s)\in\mathbb{C}[s]$ such that $P(s)\Lambda_f(s)$ continues to an entire function of finite order, then the functions $\Lambda_f(s)$ and $\Lambda_g(s)$ have at most double poles in the set $\{0,1\}$, the functions $f(z)$ and $g(z)$ defined by equation \eqref{eq:FWseries} are weight 0 Maass forms on $\Gamma_0(N)$ of parity $\epsilon$, nebentypus $\chi$ (resp. $\overline{\chi}$) and eigenvalue $\frac{1}{4}$ and, furthermore, $f(z) = g(-1/Nz)$ for all $z\in\HH$.
\end{theorem}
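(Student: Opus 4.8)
The classical Weil converse theorem framework is the right setting: one wants to show that $f$ and $g$ are invariant (up to the Fricke involution) under $\Gamma_0(N)$ with the prescribed nebentypus, and this reduces to verifying invariance under a generating set for $\Gamma_0(N)$, namely the translation $z\mapsto z+1$ together with enough elements of the form $\sabcd{a}{b}{c}{d}$ whose lower-left entry $c$ runs over the primes in $\mathcal{P}$. The translation invariance of $\tilde f$ and $\tilde g$ is immediate from the Fourier expansion \eqref{eq:tilde}, and the functions $f_0,g_0$ are built precisely so that the ``constant terms'' transform correctly; so the crux is the Fricke-type relation and the relations at primes $p\in\mathcal{P}$.

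The plan is to run the standard Hecke--Weil argument in reverse via Mellin transforms. First I would record that for $\Re(s)$ large the Mellin transform in $y$ of $\tilde f(iy)$ (and its additive twists $\tilde f(iy+\alpha)$) reproduces the completed $L$-function $\Lambda_f(s)$ (resp. the completed additive twist $\Lambda_f(s,\alpha,\cos^{(r)})$), using the integral $\int_0^\infty K_0(y)y^{s}\,dy/y$ which produces exactly the $\Gamma_\R$-factors appearing in \eqref{eq:additive_twists_defn}; the $K$-Bessel function and the eigenvalue $1/4$ (equivalently the Laplace eigenvalue for weight $0$) are matched by the $\sqrt y\,K(2\pi n y)$ shape. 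The polynomial hypothesis ``$P(s)\Lambda_f(s)$ entire of finite order'' is used here to justify: (i) shifting contours to pick up only the poles in $\{0,1\}$, giving that these poles are at most double (they come from $\Res_{s=0,1}$ of a function with a functional equation $s\leftrightarrow 1-s$, and the polynomial kills any spurious growth), and (ii) a Phragmén--Lindelöf/convexity argument so that the inverse Mellin transforms converge and the interchange of sum and integral is legitimate. Then the functional equation \eqref{eq:FE}, repackaged through Proposition~\ref{prop:functional_equation_twists} into the additive-twist functional equations \eqref{functional_equations_additive_twists}, translates under inverse Mellin transform into the transformation law of $f$ under the Atkin--Lehner/Fricke element $\sabcd{0}{-1}{N}{0}$, i.e. $f(z)=g(-1/Nz)$, and more generally into the transformation laws under the matrices with lower-left entry a prime $q\in\mathcal{P}$.

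The genuinely delicate step is upgrading these ``one relation per modulus $q$'' identities into full $\Gamma_0(N)$-automorphy. This is where the density hypothesis on $\mathcal{P}$ enters: following the method of Weil (and its refinement used in \cite{TCTMF}), invariance under $z\mapsto z+1$ together with invariance under a single matrix $\sabcd{a}{b}{q}{d}$ for each $q$ in a set of primes that is equidistributed in every coprime residue class generates $\Gamma_0(N)$; one also needs that the additive twists at \emph{all} cusps $a/q$ ($\gcd(a,q)=1$) are controlled, which is why \eqref{functional_equations_additive_twists} is stated for every $\alpha=a/q$. I expect the main obstacle to be the analytic bookkeeping: because the twisted $L$-functions are only \emph{meromorphic} (not entire), the contour shifts acquire residue contributions that must be shown to be consistent with the claimed forms $f_0,g_0$ of the constant terms and with the allowed double poles at $\{0,1\}$; checking that no poles arise off $\{0,1\}$ after twisting, and that the finite-order hypothesis survives twisting (so Phragmén--Lindelöf still applies uniformly in $q$), is the part requiring genuine care rather than routine computation. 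Once automorphy under $\Gamma_0(N)$ is established, the growth bound $|a_n|=O(n^\sigma)$ with $\sigma<1$ forces $f$ to be of moderate growth, hence a genuine Maass form, and the parity and nebentypus come directly from the definitions $a_{-n}=(-1)^\epsilon a_n$ and from the character $\chi$ in \eqref{eq:FE}.
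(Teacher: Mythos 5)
There is a genuine gap: your plan defers exactly the step that constitutes the actual content of this theorem. The Weil-type machinery you describe --- Mellin transforms of $\tilde f$ and its additive twists, translation of \eqref{eq:FE} and \eqref{functional_equations_additive_twists} into transformation laws, generation of $\Gamma_0(N)$ from translations and matrices with lower-left entry $q\in\mathcal{P}$ --- is precisely Theorem~\ref{thm:EntireConverse} (Theorem~3.1 of \cite{TCTMF}), which the paper simply quotes. That theorem, however, requires the twisted completed $L$-functions $\Lambda_f(s,\psi)$, $\Lambda_g(s,\bar\psi)$ to be \emph{entire}, of finite order, and bounded in vertical strips, and requires $\Lambda_f(s)$, $\Lambda_g(s)$ to have poles confined to $\{0,1\}$ of order at most two. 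In Theorem~\ref{thm:0Converse} the twists are only assumed meromorphic, with no finiteness-of-poles hypothesis at all (the polynomial hypothesis concerns the untwisted $\Lambda_f(s)$ only), so the entire burden of the proof is to eliminate the possible poles of the twists and to pin down the poles of $\Lambda_f$ and $\Lambda_g$. You flag this as ``analytic bookkeeping'' in the contour shifts, but no contour-shift or Phragm\'en--Lindel\"of argument of the kind you sketch can rule out poles of a function that is merely assumed meromorphic; one needs a mechanism that produces the continuation and boundedness from scratch. Likewise, your assertion that the polynomial hypothesis plus the $s\leftrightarrow 1-s$ symmetry yields at most double poles at $\{0,1\}$ is not a consequence of anything you set up: in the paper it emerges only after Corollary~\ref{cor:sigafter}, i.e.\ after the full argument showing the relevant additive twists lie in $\mathcal{M}(-\infty,\infty)$, combined with the convergence region $\Re(s)>1+\sigma$ and the functional equation.

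The missing ideas are concretely these. First, because the eigenvalue is $1/4$, the two archimedean parameters coincide and the standard hypergeometric continuation \eqref{eq.|w|>1} degenerates; one needs the limiting formula of Theorem~\ref{thm.finallimit}, whose logarithmic terms are exactly what produce the double poles and the $\log y$ term in $f_0,g_0$. Second, Lemma~\ref{lem:f(z)-g(1/Nz)} expresses $\tilde f(z)-\tilde g(-1/Nz)$ as a contour integral of $\Lambda_f(s)$ alone (this is where the hypothesis on $P(s)\Lambda_f(s)$ is used), and the Mellin-transform identity of Lemma~\ref{hol_lem_1} converts the resulting small-$y$ expansion into meromorphic continuation statements for the additive twists. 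Third, and crucially, the continuation and the subsequent elimination of poles (Proposition~\ref{prop.Atwistsmero}, Lemma~\ref{lem:twistssimplepoles} and its sequel) rest on a Vandermonde-type elimination over the sets $T_{\beta,M}$ built from primes in arithmetic progressions --- this is where the equidistribution hypothesis on $\mathcal{P}$ does its real work, not merely in generating $\Gamma_0(N)$ as in the classical Weil argument. Without these components your proposal reduces the theorem to the already-known entire converse theorem while leaving its hypotheses unverified, which is the whole point at issue.
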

\begin{remark}
Whilst the $L$-functions associated with irreducible 2-dimensional Galois representations satisfy several conditions in Theorem~\ref{thm:0Converse},   
they are not yet known to have only finitely many poles and so we do not draw any conclusions about their automorphy. 
That said, Theorem~\ref{thm:0Converse} combined with the analogue for holomorphic modular forms \cite[Theorem~1.1]{WCTWP} implies \cite[Corollary]{POALFATSAC} which says that if the Artin $L$-function of an irreducible 2-dimensional Galois representation is not automorphic, then it has infinitely many poles. 
\end{remark}
We will prove Theorem \ref{thm:0Converse} by showing that the assumptions imply that the twists are entire and bounded in vertical strips, at which point we may apply:
\begin{theorem}[Theorem~3.1 in \cite{TCTMF}]\label{thm:EntireConverse}
Let $a_n,b_n$ be sequences of complex numbers such that $|a_n|,|b_n|=O\left(n^{\sigma}\right)$ for some $\sigma\in\R$, and let $N$, $\chi$, $\epsilon$, and $\mathcal{P}$ be as in Theorem~\ref{thm:0Converse}. 
Assume that:
\begin{enumerate}
\item  If $\epsilon = 0$, then $\Lambda_f(s)$ and $\Lambda_g(s)$ continue to holomorphic functions of finite order on $\mathbb{C}-\{0,1\}$ which are bounded in vertical strips with at most double poles in the set $\{0,1\}$,
\item If $\epsilon=1$, then $\Lambda_f(s)$ and $\Lambda_g(s)$ continue to entire functions of finite order which are bounded in vertical strips,
\item For all primitive characters $\psi$ of conductor $q\in\mathcal{P}$, the functions $\Lambda_f(s,\psi)$ and $\Lambda_g(s,\bar{\psi})$ continue to entire functions of finite order which are bounded in vertical strips,
\end{enumerate}
and, for all primitive characters $\psi$ of conductor $q\in\mathcal{P}\cup\{1\}$, the functions $\Lambda_f(s,\psi)$ and $\Lambda_g(s,\bar{\psi})$ satisfy equation~\eqref{eq:FE},
then the functions $f(z)$ and $g(z)$ defined by equation \eqref{eq:FWseries} are weight 0 Maass forms on $\Gamma_0(N)$ of parity $\epsilon$, nebentypus $\chi$ (resp. $\overline{\chi}$) and eigenvalue $\frac{1}{4}$ and, furthermore, $f(z) = g(-1/Nz)$ for all $z\in\HH$.
\end{theorem}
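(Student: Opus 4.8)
The plan is to deduce Theorem~\ref{thm:0Converse} from Theorem~\ref{thm:EntireConverse}. The functional equations that Theorem~\ref{thm:EntireConverse} requires (for primitive characters of conductor in $\mathcal{P}\cup\{1\}$) are among the hypotheses of Theorem~\ref{thm:0Converse}, so the entire task is to supply the missing analytic input: continuation to the claimed domains, the location and order of the poles, and boundedness in vertical strips, for the untwisted functions and for the twists by characters of conductor $q\in\mathcal{P}$.

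The soft part is routine. Since $|a_n|,|b_n|=O(n^\sigma)$ with $0<\sigma<1$, the Dirichlet series $L_f(s,\psi)$, $L_g(s,\bar\psi)$ converge absolutely for $\Re(s)>\sigma+1$, where Stirling's estimate for $\Gamma_{\R}(\cdot)^2$ makes $\Lambda_f(s,\psi)$, $\Lambda_g(s,\bar\psi)$ holomorphic and bounded; equation~\eqref{eq:FE} reflects this to $\Re(s)<-\sigma$. Since $-1<-\sigma\le\sigma+1<2$, every pole lies in the strip $-\sigma\le\Re(s)\le\sigma+1\subset(-1,2)$, whose only integer points are $0$ and $1$. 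The hypothesis that $P(s)\Lambda_f(s)$ is entire of finite order, combined with~\eqref{eq:FE}, makes $\Lambda_f(s)$ and $\Lambda_g(s)$ meromorphic of finite order with finitely many poles; Phragm\'{e}n--Lindel\"{o}f applied to $P(s)\Lambda_f(s)$ between the half-planes $\Re(s)\ge\sigma+1+\eta$ and $\Re(s)\le-\sigma-\eta$ then yields polynomial growth, hence boundedness in vertical strips off the finitely many poles. The same argument will apply to the $\psi$-twists once they are known to be entire.

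The crux is to confine the poles of $\Lambda_f,\Lambda_g$ to $\{0,1\}$ with order at most two, and to prove that $\Lambda_f(s,\psi),\Lambda_g(s,\bar\psi)$ are entire for every primitive $\psi$ of conductor $q\in\mathcal{P}$. The lever is the dictionary between multiplicative and additive twists: by equation~\eqref{character_twists_in_terms_of_additive_twists}, and since $\bar\psi$ vanishes at $0\bmod q$, each $\Lambda_f(s,\psi)$ is a finite $\C$-linear combination of the additive twists $\Lambda_f(s,b/q,\cos^{(\epsilon_\psi)})$ with $1\le b\le q-1$, so it suffices to show each such additive twist with $b/q\notin\Z$ is entire, and likewise for $g$. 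I would establish this through the Hecke--Mellin description of the twists in terms of the Fourier--Whittaker functions $\tilde f,\tilde g$ of equation~\eqref{eq:tilde}: using the transform $\int_0^\infty K_0(2\pi n y)\,y^{s}\,\tfrac{dy}{y}=\tfrac14\Gamma_{\R}(s)^2 n^{-s}$ and the explicit continuation of $\hyp{a}{a}{c}{w}$ from Section~\ref{sec.HGfns} (Theorem~\ref{thm.finallimit}), which controls the logarithmic terms forced by double poles, together with the boundedness in vertical strips already obtained, one may shift the relevant Mellin contours past the finitely many poles. Applied to the untwisted functional equation this expresses the asymptotics of $\tilde f$ near the cusps $0$ and $\infty$ as sums of terms $y^{1/2-s_0}(\log y)^{j}$ over the poles $s_0$ of $\Lambda_f$ (plus a rapidly decaying remainder); for these to assemble into the $n=0$ Fourier mode of an eigenvalue-$\tfrac14$ form — a combination of $y^{1/2}$ and $y^{1/2}\log y$ in the local coordinate at the cusp — one needs $s_0\in\{0,1\}$ and $j\le1$, and, when $\epsilon=1$, the residues at $0$ and $1$ to vanish, so that $\Lambda_f,\Lambda_g$ are then entire. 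Feeding this back into the functional equation of Proposition~\ref{prop:functional_equation_twists}, whose right-hand side for $\alpha=b/q$ involves, besides the $\Lambda_g(1-s,\bar\psi)$, only the combination $\Lambda_f(s)-\tfrac{q}{q-1}\Lambda_f(s,\psi_0)$, shows that the additive twists at $b/q\notin\Z$ carry no residual terms and are therefore entire; their finite order and boundedness in vertical strips then follow as before.

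The main obstacle is precisely this last step: upgrading the weak hypothesis that some polynomial multiple of $\Lambda_f(s)$ is entire of finite order — which on its face permits poles anywhere in the strip — into the rigid conclusion that the poles lie in $\{0,1\}$, are at most double, and that all the character twists are genuinely entire, using only the functional equations at conductor $1$ and at conductors in $\mathcal{P}$. The delicate ingredients are the justification of the contour shifts and, above all, the extraction of a contradiction from a hypothetical residual term $y^{1/2-s_0}$ with $s_0\notin\{0,1\}$ — a statement about subtle cancellation in the (not absolutely convergent) Whittaker series near a cusp, where the eigenvalue being exactly $\tfrac14$ and the analysis of Section~\ref{sec.HGfns} are indispensable. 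Once the pole structure and the entireness of the twists are established, every hypothesis of Theorem~\ref{thm:EntireConverse} is met, and its conclusion — that $f$ and $g$ are weight $0$, eigenvalue $\tfrac14$ Maass forms on $\Gamma_0(N)$ of parity $\epsilon$ and nebentypus $\chi$ (resp.\ $\overline{\chi}$) with $f(z)=g(-1/Nz)$ — is the remaining content of Theorem~\ref{thm:0Converse}.
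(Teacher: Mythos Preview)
A bookkeeping note first: Theorem~\ref{thm:EntireConverse} is quoted from \cite{TCTMF} and is not proved in this paper; your proposal is really a sketch of the deduction of Theorem~\ref{thm:0Converse} from it, which is indeed the content of Section~\ref{sec.proof}. On that reading, your overall plan and your ``soft part'' (confining poles to the open strip $(-1,2)$ via $0<\sigma<1$ and~\eqref{eq:FE}, then Phragm\'en--Lindel\"of for vertical-strip bounds) agree with the paper. The gap is in your ``crux''. Your argument that poles of $\Lambda_f$ must sit at $\{0,1\}$ because the asymptotics of $\tilde f$ near a cusp must ``assemble into the $n=0$ Fourier mode of an eigenvalue-$\tfrac14$ form'' is circular: that $f$ is a Maass form is the desired conclusion, and nothing yet constrains the cuspidal behaviour of the raw Fourier--Whittaker series $\tilde f$. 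Likewise, your bootstrap via Proposition~\ref{prop:functional_equation_twists} to entireness of the character twists presumes control of the $\Lambda_g(1-s,\bar\psi)$ appearing on the right-hand side of~\eqref{functional_equations_additive_twists}, which is precisely what is at stake.

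The paper's actual mechanism is a Vandermonde elimination over the sets $T_\beta$, which your sketch omits entirely. Lemma~\ref{hol_lem_1} packages the Mellin transform of a carefully built $F_\alpha(y)$ into $M_\alpha(s)\in\mathcal{H}(\tfrac32-2\ell_0,\infty)$; summing $\sum_{\lambda\in T_{\beta,M}}c_\lambda\lambda^{2s-2t_0-1}M_{\lambda^{-1}\alpha}(s-t_0)$ with Vandermonde weights satisfying~\eqref{lambdas} (Lemma~\ref{lem:holomorphic_function}) isolates a single $\Lambda_g$-term; differencing two choices $\beta,\beta'$ with common numerator then cancels the uncontrolled $\Lambda_f$-contributions and places the additive twists in $\mathcal{M}(-\infty,\infty)$ (Proposition~\ref{prop.Atwistsmero}, Corollary~\ref{cor:sigafter}), whence poles of $\Lambda_f,\Lambda_g$ only at $\{0,1\}$ of order $\le2$. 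Entireness of the twists---and of $\Lambda_f,\Lambda_g$ when $\epsilon=1$---is then extracted by a further residue calculus (Lemmas~\ref{lem:res_1}--\ref{lem:epsilon_0_residues_simple}) that exploits a second Vandermonde freedom (Lemma~\ref{lem:arbitrary_value}) to let $\sum_\lambda c_\lambda\lambda^{-t_0}\log\lambda$ take an arbitrary value while~\eqref{lambdas} still holds, forcing the relevant residues to vanish. Without this machinery the argument does not close.
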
 
\begin{remark}
The assumption that $0<\sigma<1$ in Theorem~\ref{thm:0Converse} is stronger than the assumption that $0<\sigma$ in Theorem~\ref{thm:EntireConverse}. The assumption made in Theorem~\ref{thm:0Converse} is sufficient for application to Theorem~\ref{thm.ArtinQuotients}, but it can presumably be relaxed with some additional technicalities. Following the argument below, the assumption that $\sigma<1$ can be avoided until after Corollary \ref{cor:sigafter}.
\end{remark}
We begin with the following Lemma:
\begin{lem}[Lemma 4.2 in \cite{TCTMF}] \label{lem:f(z)-g(1/Nz)}
	Make the assumptions of Theorem \ref{thm:0Converse}. Let $z \in \mathcal{H}$ and write $w = \frac{\Re(z)}{\Im(z)}$. We have:
	
	\begin{equation}\label{eq.circint}
	\tilde{f}(z) - \tilde{g} \left( -\frac{1}{Nz}\right)	= \frac{\left(2w \right)^\epsilon}{2 \pi i} \oint \Lambda_f(s) \hyp{\frac{s+\epsilon}{2}}{\frac{s+\epsilon}{2}}{\frac{1}{2}+\epsilon}{-w^2} \mathrm{Im}(z)^{\frac{1}{2}-s}ds,
	\end{equation}
where the integral is taken so that the contour encloses all poles\footnote{By assumption, there is a polynomial $P(s)\in\C[s]$ so that $P(s)\Lambda_f(s)$ is entire. We make no similar assumptions about the twists $\Lambda_f(s,\psi)$, $\psi\neq\textbf{1}$. This will be relevant again in equations~\eqref{eq.I} and~\eqref{eq.Itilde}.} of $\Lambda_f(s)$.
\end{lem}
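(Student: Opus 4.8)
The plan is to run the standard Weil-type converse-theorem analysis, following the proof of \cite[Lemma~4.2]{TCTMF}: represent $\tilde f(z)$ as an inverse Mellin transform in which $\Lambda_f(s)$ appears, do the same for $\tilde g(-1/Nz)$, use the conductor-$1$ case of functional equation~\eqref{eq:FE} to bring both representations to a common integrand on two different vertical lines, and read off the difference as a closed contour integral around the poles of $\Lambda_f(s)$.

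First I would compute, for each $n\geq 1$, the Mellin transform $\int_0^\infty \sqrt{y}\,K(2\pi n y)\,e^{2\pi i n w y}\,y^{s-1}\,dy$, where $z=\Im(z)(w+i)$ so that $x=w\,\Im(z)$. Combining the $n$th and $(-n)$th summands in~\eqref{eq:tilde} --- which, since $a_{-n}=(-1)^{\epsilon}a_n$, produce $2\cos(2\pi n w\,\Im(z))$ if $\epsilon=0$ and $2i\sin(2\pi n w\,\Im(z))$ if $\epsilon=1$ --- the classical Mellin--Bessel integral of $K_0$ against a trigonometric factor introduces a Gauss hypergeometric function of argument $-w^2$ (via a chain of quadratic and Euler/Pfaff transformations of ${}_2F_1$); the $\sin$ case carries an extra factor $w$, which is the origin of the prefactor $(2w)^{\epsilon}$. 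Summing over $n$ --- the interchange of sum and integral being justified by $|a_n|=O(n^{\sigma})$ together with the exponential decay of $K_0$ and of $\Gamma_{\R}$ in vertical strips --- recovers $\Lambda_f(s)=\Gamma_{\R}(s+\epsilon)^2L_f(s)$, and one obtains, for $\Re(s)=\sigma_0$ with $\sigma_0>\sigma+1$ and $\sigma_0$ exceeding the real part of every pole of $\Lambda_f(s)$,
\[
\tilde f(z)=\frac{(2w)^{\epsilon}}{2\pi i}\int_{(\sigma_0)}\Lambda_f(s)\,\hyp{\frac{s+\epsilon}{2}}{\frac{s+\epsilon}{2}}{\frac{1}{2}+\epsilon}{-w^2}\,\Im(z)^{\frac{1}{2}-s}\,ds.
\]

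Next I would apply this with $g$ in place of $f$ and $-1/Nz$ in place of $z$. Under $z\mapsto -1/Nz$ the ratio $\Re/\Im$ becomes $-w$ and $\Im(-1/Nz)=\bigl(N\,\Im(z)\,(1+w^2)\bigr)^{-1}$. After the substitution $s\mapsto 1-s$, the trivial-character, conductor-$1$ case of~\eqref{eq:FE} (for which $\epsilon_{\mathbf 1}=0$, $\tau(\mathbf 1)=1$ and $q^2N=N$, hence $\Lambda_g(1-s)=(-1)^{\epsilon}N^{s-\frac12}\Lambda_f(s)$) turns $\Lambda_g$ into $\Lambda_f$, and Euler's transformation $\hyp{a}{a}{c}{\zeta}=(1-\zeta)^{c-2a}\hyp{c-a}{c-a}{c}{\zeta}$ applied with $a=\tfrac{(1-s)+\epsilon}{2}$, $c=\tfrac12+\epsilon$, $\zeta=-w^2$ restores the hypergeometric factor above at the expense of a factor $(1+w^2)^{s-\frac12}$. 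The powers of $N$ and of $1+w^2$ cancel exactly against those produced by $\Im(-1/Nz)^{\frac12-(1-s)}$, and the two signs $(-1)^{\epsilon}$ --- one from the functional equation, one from $(2(-w))^{\epsilon}$ --- cancel as well. The outcome is
\[
\tilde g\!\left(-\tfrac{1}{Nz}\right)=\frac{(2w)^{\epsilon}}{2\pi i}\int_{(1-\sigma_0)}\Lambda_f(s)\,\hyp{\frac{s+\epsilon}{2}}{\frac{s+\epsilon}{2}}{\frac{1}{2}+\epsilon}{-w^2}\,\Im(z)^{\frac{1}{2}-s}\,ds,
\]
i.e.\ the same integrand as for $\tilde f(z)$, now on the far-left line $\Re(s)=1-\sigma_0$.

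Finally, subtracting, the difference of two vertical-line integrals of a single meromorphic integrand equals $2\pi i$ times the sum of its residues in the strip $1-\sigma_0<\Re(s)<\sigma_0$, provided the horizontal segments at height $\pm T$ tend to $0$ as $T\to\infty$. This last point rests on the exponential decay of $\Gamma_{\R}(s+\epsilon)^2$ in $|\Im(s)|$, which dominates the polynomial growth of $L_f$ away from its poles (and of $\Lambda_f$ on $\Re(s)=1-\sigma_0$, via the functional equation), together with a uniform-in-strip polynomial bound for $\hyp{\frac{s+\epsilon}{2}}{\frac{s+\epsilon}{2}}{\frac{1}{2}+\epsilon}{-w^2}$ along vertical lines --- for which one invokes~\eqref{eq.|w|>1}, equivalently Theorem~\ref{thm.finallimit}, when $|w|>1$, and absolute convergence of~\eqref{eq.2F1<1} when $|w|\leq 1$. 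Since $\tfrac12+\epsilon\notin\Z_{\leq 0}$, both $\hyp{\frac{s+\epsilon}{2}}{\frac{s+\epsilon}{2}}{\frac{1}{2}+\epsilon}{-w^2}$ and $\Im(z)^{\frac12-s}$ are entire in $s$, so the only poles of the integrand are those of $\Lambda_f(s)$ --- finitely many, since $P(s)\Lambda_f(s)$ is entire, and all inside the strip by the choice of $\sigma_0$. This is exactly~\eqref{eq.circint}. The main obstacle is the first step: verifying the precise Mellin--Bessel identity --- in particular that the hypergeometric parameters, the argument $-w^2$, and the prefactor $(2w)^{\epsilon}$ emerge exactly as stated --- which passes through a somewhat involved sequence of ${}_2F_1$ transformations; a secondary technical point is the uniform hypergeometric bound needed to discard the horizontal segments.
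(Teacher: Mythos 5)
The paper does not actually prove this lemma: it is imported from \cite[Lemma~4.2]{TCTMF} (the only added remark being that the parameter $\nu\neq0$ assumed there is never used), and your proposal is precisely the route of that cited proof --- Mellin--Bessel representation of $\tilde f$ along the ray $z=\Im(z)(w+i)$, the same for $\tilde g(-1/Nz)$, the conductor-one case of \eqref{eq:FE} together with Euler's transformation to bring both integrals to a common integrand on the lines $\Re(s)=\sigma_0$ and $\Re(s)=1-\sigma_0$, and a rectangle contour collecting the finitely many poles of $\Lambda_f(s)$ (finitely many because $P(s)\Lambda_f(s)$ is entire). Your bookkeeping of that middle step is correct: with $a=\frac{1-s+\epsilon}{2}$, $c=\frac12+\epsilon$ one has $c-a=\frac{s+\epsilon}{2}$ and $c-2a=s-\frac12$, so the powers of $N$ and of $1+w^2$ and the two signs $(-1)^{\epsilon}$ cancel exactly as you say.

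One claim in your last step would fail as stated: the factor $\hyp{\frac{s+\epsilon}{2}}{\frac{s+\epsilon}{2}}{\frac12+\epsilon}{-w^2}$ is \emph{not} polynomially bounded on vertical lines. Inverting the Mellin--Bessel identity (rotate the $y$-contour to $\arg y=\theta$ with $|\theta|<\frac{\pi}{2}-\arctan|w|$, where $K_0(2\pi y)\cos(2\pi wy)$ still decays) shows that $\Gamma_{\R}(s+\epsilon)^2\hyp{\frac{s+\epsilon}{2}}{\frac{s+\epsilon}{2}}{\frac12+\epsilon}{-w^2}$ decays like $e^{-(\frac{\pi}{2}-\arctan|w|)|\Im(s)|}$ up to subexponential factors, so the hypergeometric factor alone grows exponentially, at rate roughly $\arctan(|w|)\,|\Im(s)|$. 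The horizontal segments are discarded not because the ${}_2F_1$ is tame but because this exponential growth is strictly beaten by the $e^{-\frac{\pi}{2}|\Im(s)|}$ decay of $\Gamma_{\R}(s+\epsilon)^2$, combined with boundedness of $L_f$ on $\Re(s)=\sigma_0$, the functional equation on $\Re(s)=1-\sigma_0$, and Phragm\'en--Lindel\"of via the finite-order hypothesis in between; this is exactly the ``rapid decay of the integrand in a vertical strip'' that the remark following the lemma singles out as essential. Finally, you defer all constant tracking: with the normalisations \eqref{eq:tilde} and \eqref{eq:Lambda} the tabulated integrals produce extra factors (powers of $2$, $\pi^{\epsilon}$, $i^{\mp\epsilon}$; compare the factor $i^{-\epsilon}\pi^{\epsilon}$ appearing in \eqref{eq.Mellinftilde}), so the verification that the prefactor is exactly $(2w)^{\epsilon}$ --- which you yourself flag as the main obstacle --- genuinely needs to be carried out rather than assumed.
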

\begin{remark}
Lemma \ref{lem:f(z)-g(1/Nz)} was originally stated in terms of a parameter $\nu$ which was assumed to be non-zero, though this assumption was never used in the proof and the statement remains valid as stated above. In particular, the integrand in equation~\eqref{eq.circint} is rapidly decaying as $|s|\rightarrow\infty$ in a vertical strip, and this is an essential part of the proof.
\end{remark}
From now on, we consider $\alpha \in \Q_{>0}$, and set $z=\alpha(1+iy)\in\mathcal{H}$.
Equation~\eqref{eq.circint} now reads:
\begin{equation}\label{eq.circintspecial}
\tilde{f}(z) - \tilde{g} \left( -\frac{1}{Nz}\right)=\frac{(2y^{-1})^{\epsilon}}{2\pi i}\oint \Lambda_f(s) \hyp{\frac{s+\epsilon}{2}}{\frac{s+\epsilon}{2}}{\frac{1}{2}+\epsilon}{-y^{-2}}(\alpha y)^{\frac{1}{2}-s}ds.
\end{equation}
Suppose that $y\in(0,1)$. Replacing $w$ by $-y^{-2}$ in equation \eqref{final_limit}, multiplying by $\left(2y^{-1}\right)^{\epsilon}(\alpha y)^{\frac12-s}$, and noting that $2^{\epsilon}\Gamma\left(\frac12+\epsilon\right)=\sqrt{\pi}$ for $\epsilon\in\{0,1\}$, we get:
\begin{multline}\label{eq.2yFsyGlJ}
	\left(2y^{-1}\right)^\epsilon \text{ } \hyp{\frac{s+\epsilon}{2}}{\frac{s+\epsilon}{2}}{\frac{1}{2}+\epsilon}{ -y^{-2}} (\alpha y)^{\frac{1}{2}-s} 
	= \sqrt{\pi}~\alpha^{\frac12-s}\sum_{k=0}^{\infty} \frac{(-1)^k y^{2k+\frac{1}{2}}}{(k!)^2}G_k(s)\\
\cdot\left(-2\log(y)+J_k(s)\right),
\end{multline}
where, for $k\in\mathbb{Z}_{\geq0}$, we define:
\begin{equation}\label{eq:G}
	G_k(s) = \frac{\left(\frac{s+\epsilon}{2} \right)_k \left(\frac{s-\epsilon+1}{2} \right)_k}{\Gamma(\frac{s+\epsilon}{2}) \Gamma(\frac{1-s+\epsilon}{2})},
\end{equation}
and
\begin{multline}\label{eq:K}
J_k(s) = 2 \Psi(k+1)-\Psi \left( \frac{s+\epsilon}{2} + k \right) - \Psi \left( \frac{s-\epsilon+1}{2} + k \right) + \Psi \left( \frac{s-\epsilon+1}{2}\right) \\ - \Psi \left( \frac{1+\epsilon-s}{2} \right). 
\end{multline}
Though it is not reflected in the notation, we mention that the functions defined in equations~\eqref{eq:G} and \eqref{eq:K} depend on $\epsilon$. Note that $G_k(s)$ is entire, and the product $J_k(s)G_k(s)$ has only removable singularities. 
For $0<y<1$, we deduce from equation~\eqref{eq.2yFsyGlJ} that:
\begin{multline}\label{eq.2yFsyGlJ1}
	\left(2y^{-1}\right)^\epsilon \text{ } \hyp{\frac{s+\epsilon}{2}}{\frac{s+\epsilon}{2}}{\frac{1}{2}+\epsilon}{ -y^{-2}} (\alpha y)^{\frac{1}{2}-s} \\
	= \sqrt{\pi}~\alpha^{\frac12-s}\sum_{k=0}^{\ell_0-1} \frac{(-1)^k y^{2k+\frac{1}{2}}}{(k!)^2}G_k(s)\left(-2\log(y)+J_k(s)\right)+O\left(y^{2\ell_0+\frac12}\log(y)\right).
\end{multline}
For $s$ in a fixed compact set, 
the error term can be chosen independently from $s$. 
Integrating equation \eqref{eq.2yFsyGlJ1} around a contour enclosing all poles of $\Lambda_f(s)$, 
we obtain:
\begin{multline}\label{eq.circintexpand}
		\frac{(2y^{-1})^\epsilon}{2\pi i} \oint \Lambda_f(s) \hyp{ \frac{s+\epsilon}{2}}{\frac{s+\epsilon}{2}}{\frac{1}{2} + \epsilon}{ -y^{-2}} (\alpha y)^{\frac{1}{2}-s} ds \\
	= \sum_{k = 0}^{\ell_0 -1} \left( -2\log(y)\mathcal{I}_k(\alpha) + \widetilde{\mathcal{I}}_k(\alpha) \right) y^{ 2k + \frac{1}{2}} + O\left(y^{2\ell_0+\frac12}\log(y)\right),
\end{multline} 
where
\begin{equation}\label{eq.I}
	\mathcal{I}_k(\alpha) = (-1)^k \frac{\sqrt{\pi}}{(k!)^2} \cdot \frac{1}{2 \pi i} \oint  \Lambda_f(s) G_k(s)  \alpha^{\frac{1}{2} -s} ds,
\end{equation}
and
\begin{equation}\label{eq.Itilde}
\widetilde{\mathcal{I}}_k(\alpha) = (-1)^k \frac{\sqrt{\pi}}{(k!)^2} \cdot \frac{1}{2 \pi i} \oint  \Lambda_f(s)J_k(s) G_k(s) \alpha^{\frac{1}{2} - s} ds.
\end{equation}
The functions in \eqref{eq.I} and \eqref{eq.Itilde} depend on $\epsilon$.
\begin{definition}
For any open interval $(a,b) \subset \mathbb{R}$ denote by $\mathcal{M}(a,b)$ the set of meromorphic functions which are holomorphic on $a <\Re (s) < b$ except for at most double poles at each $s\in\mathbb{Z}$, and which are bounded on the strips $\{ s \in \mathbb{C} : \Re (s) \in [ c,d ], \vert \Im (s) \vert \geq 1 \}$ for each compact $[c,d] \subset (a,b)$. 
Furthermore, let $\mathcal{H}(a,b)$ denote the set of $f \in \mathcal{M}(a,b)$ such that $f$ is holomorphic at each $s\in\mathbb{Z}$.
\end{definition}
Given $\alpha\in\mathbb{Q}_{>0}$, from now on we will write $\beta=-1/N\alpha$.
\begin{lem}\label{hol_lem_1}
For $\alpha \in \mathbb{Q}_{> 0}$ and $\ell_0\in\mathbb{Z}_{>0}$, the following function is in $\mathcal{H}\left(\frac32 - 2\ell_0,\infty\right)$:
\begin{multline}\label{eq.M}
		M_\alpha (s) = (N\alpha ^2)^{s-\frac{1}{2}} \sum_{a \in \{ 0, 1\}}i^{-a} \sum_{\substack{t = 0\\
	t \equiv a + \epsilon \text{ mod }2}}^{2\ell_0 -1}\frac{(2\pi i N \alpha)^t\Gamma_{\mathbb{R}}(1-s)^2\Lambda_g \left( s+t,\beta, \cos^{(a)} \right)}{t!\Gamma_{\mathbb{R}}\left(1-s-2~\lfloor t/2\rfloor \right)^2} \\
	- i^{-\epsilon} \pi^\epsilon \Lambda_f \left(s,\alpha,\cos^{(\epsilon)}\right)+ \alpha^{s-\frac{1}{2}} \sum_{k = 0}^{\ell_0-1} \left(\frac{\widetilde{\mathcal{I}}_k(\alpha)}{s+2k}+2\frac{\mathcal{I}_k(\alpha)}{(s+2k)^2} \right),
\end{multline}	
where $\lfloor t/2\rfloor$ denotes the largest integer $m\leq t/2$. 
\end{lem}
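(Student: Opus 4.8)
The plan is to start from the contour-integral identity in equation~\eqref{eq.circintexpand}, which expresses the left-hand side of equation~\eqref{eq.circintspecial} as an asymptotic expansion in $y$ with coefficients built from $\mathcal{I}_k(\alpha)$ and $\widetilde{\mathcal{I}}_k(\alpha)$. Combining this with Lemma~\ref{lem:f(z)-g(1/Nz)} gives an expansion of $\tilde f(z) - \tilde g(-1/Nz)$ as $y\to0^+$, where $z=\alpha(1+iy)$. The next step is to obtain a \emph{second} expansion of the same quantity: apply the defining Fourier--Whittaker series~\eqref{eq:tilde} for $\tilde f$ and $\tilde g$, use the functional equation for the additive twists (Proposition~\ref{prop:functional_equation_twists}) to rewrite $\tilde g(-1/Nz)$ in terms of additive twists of $L_f$, and expand the $K$-Bessel functions. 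After expressing both sides as functions of $s$ via a Mellin transform in $y$ (the twisted $L$-functions $\Lambda_f(s,\alpha,\cos^{(a)})$ and $\Lambda_g(s,\beta,\cos^{(a)})$ arising as the Mellin transforms of the relevant pieces), matching the two expansions term-by-term yields exactly an identity asserting that $M_\alpha(s)$, as written in equation~\eqref{eq.M}, is \emph{holomorphic}: the poles of $\Lambda_f(s,\alpha,\cos^{(\epsilon)})$ and the poles coming from $\mathcal I_k,\widetilde{\mathcal I}_k$ at $s=-2k$ must cancel against the contributions of the $\Lambda_g(s+t,\beta,\cos^{(a)})$ sum. This is the same mechanism as \cite[Section~4]{TCTMF}, adapted to allow $\Lambda_f(s)$ to have poles (hence the explicit $\mathcal I_k/(s+2k)^2$ and $\widetilde{\mathcal I}_k/(s+2k)$ principal-part terms in $M_\alpha$).

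Concretely, I would proceed as follows. \textbf{Step 1:} Record the Mellin transform $\int_0^\infty \sqrt y\, K(2\pi n y)\, y^{s-1}\,dy$ in closed form (a ratio of $\Gamma_{\mathbb R}$'s), so that the Mellin transform of $\tilde f(\alpha(1+iy))$ in $y$ is $i^{-\epsilon}\pi^\epsilon \Lambda_f(s,\alpha,\cos^{(\epsilon)})$ up to the correct normalisation (this accounts for the second term of $M_\alpha$). \textbf{Step 2:} Do the same for $\tilde g(-1/Nz)$; here $-1/Nz = \beta(1 + iy')/(1+y'^2)$-type manipulations (or rather, write $-1/N\alpha(1+iy) = \beta\cdot\frac{1-iy}{1+y^2}$ and expand $\frac{1}{1+y^2}$ and $(1-iy)^{\text{power}}$ in powers of $y$) produce the double sum over $a\in\{0,1\}$ and $t$ with the binomial-type coefficients $(2\pi i N\alpha)^t/t!$ and the $\Gamma_{\mathbb R}$-quotient shift by $2\lfloor t/2\rfloor$; the character-to-additive-twist dictionary~\eqref{character_twists_in_terms_of_additive_twists} and Proposition~\ref{prop:functional_equation_twists} then convert $\Lambda_g$-twists at $\beta$ into the stated shape (this is the first big term of $M_\alpha$). \textbf{Step 3:} Take the Mellin transform of equation~\eqref{eq.circintexpand}: since $\int_0^1 y^{2k+1/2}\, y^{s-1}\, dy = \frac{1}{s+2k+1/2}$ and $\int_0^1 (-2\log y)\, y^{2k+1/2}\, y^{s-1}\, dy = \frac{2}{(s+2k+1/2)^2}$ (after the standard shift absorbing the $1/2$ into $s$), we get precisely the third term of $M_\alpha$, with the principal parts $2\mathcal I_k(\alpha)/(s+2k)^2 + \widetilde{\mathcal I}_k(\alpha)/(s+2k)$. \textbf{Step 4:} The cut-off at $\ell_0$ produces an error $O(y^{2\ell_0+1/2}\log y)$, whose Mellin transform is holomorphic for $\Re(s) > \tfrac32 - 2\ell_0$; this is exactly the left endpoint of the claimed domain. \textbf{Step 5:} Since $\tilde f(z) - \tilde g(-1/Nz)$ is manifestly a nice function of $y$ (rapidly decaying at $\infty$, smooth on $(0,\infty)$), its Mellin transform is holomorphic on a half-plane; combining with Steps~1--4, the function $M_\alpha(s)$ equals that holomorphic Mellin transform plus a remainder holomorphic for $\Re(s) > \tfrac32 - 2\ell_0$, hence $M_\alpha \in \mathcal H(\tfrac32 - 2\ell_0, \infty)$ — i.e. holomorphic \emph{including} at the integers, because all the would-be poles cancel.

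\textbf{Main obstacle.} The delicate point is Step~2: correctly tracking the combinatorics of expanding $g\bigl(-1/Nz\bigr)$ — the substitution $z\mapsto -1/Nz$ interacts with the argument of the exponential $\exp(2\pi i n x)$ and with $\sqrt{y}$, producing the powers of $2\pi i N\alpha$, the parity bookkeeping via $a \equiv t+\epsilon \pmod 2$ and the $i^{-a}$, and the shift $\lfloor t/2\rfloor$ in the $\Gamma_{\mathbb R}$-quotient. One must verify that these coefficients are precisely those appearing in equation~\eqref{eq.M}, and that the re-expansion of the $K$-Bessel factor does not spoil the uniform $O(y^{2\ell_0 + 1/2}\log y)$ control over $s$ in compacts (which is what lets us conclude holomorphy rather than merely a formal identity). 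Interchanging the infinite Fourier--Whittaker sum with the Mellin integral also requires the decay $|a_n|, |b_n| = O(n^\sigma)$ together with the exponential decay of $K_0$; this is where convergence must be argued carefully, though it is routine given the hypotheses. Everything else — Steps~1, 3, 4, 5 — is bookkeeping of Mellin transforms and standard estimates, assisted by the explicit hypergeometric continuation of Theorem~\ref{thm.finallimit} already in hand.
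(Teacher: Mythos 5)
Your proposal follows essentially the same route as the paper: one identifies $M_\alpha(s)$ as the Mellin transform in $y$ (with kernel $(\alpha y)^{s-\frac12}\frac{dy}{y}$) of the combination of the truncated expansion of $\tilde{g}\left(-1/Nz\right)$, the series $\tilde{f}(\alpha+i\alpha y)$, and the subtracted $y^{2k+\frac12}$, $y^{2k+\frac12}\log y$ principal terms, which is $O\left(y^{2\ell_0-1}\right)$ as $y\rightarrow0^+$ by equations~\eqref{eq.circintspecial} and~\eqref{eq.circintexpand} and rapidly decaying as $y\rightarrow\infty$, and then matches the pieces exactly as in your Steps 1, 3, 4, 5. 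The single step you flag as the main obstacle — the expansion of $\tilde{g}\left(-1/Nz\right)$ with the $(2\pi iN\alpha)^t/t!$ coefficients, the parity bookkeeping, and the $\Gamma_{\mathbb{R}}$-shift by $2\lfloor t/2\rfloor$ — is precisely what the paper imports from \cite[Lemma~2.4]{BCK}; note that no functional equation or appeal to Proposition~\ref{prop:functional_equation_twists} is needed there, since the additive twists of $\Lambda_g$ at $\beta$ appear directly from the Fourier--Whittaker expansion of $g$ near the real point $\beta$, and the binding error term giving the left endpoint $\frac32-2\ell_0$ is the $O\left(y^{2\ell_0-1}\right)$ from that truncation rather than the $O\left(y^{2\ell_0+\frac12}\log y\right)$ from the hypergeometric cutoff.
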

\begin{proof}
Replacing $y$ by $\alpha y$ in \cite[Lemma~2.4]{BCK}, for $y \in (0,\frac12]$ and $\ell_0 \in \Z_{>0}$, we get\footnote{Equation~\eqref{eq.BCK} was originally stated for a function constructed using a Maass form, though the proof demonstrates that it remains valid for $g$. The chosen contour $\mathrm{Re}(s')=2$ requires that $\sigma<1$, however could be replaced with $\mathrm{Re}(s')=1+\sigma+\iota$, for any $\iota>0$.}:
\begin{multline}\label{eq.BCK}
		\tilde{g} \left( -\frac{1}{N(\alpha+\alpha i y)} \right) = O \left( y^{2\ell_0-1} \right) + \sum_{a \in \{0,1\}} i^{-a} \sum_{\substack{t = 0 \\ t \equiv a + \epsilon \mod 2}}^{2\ell_0-1} \frac{(2 \pi i N \alpha)^t}{t!} \\
		\cdot \frac{1}{2 \pi i} \int_{\mathrm{Re}(s')=2}\frac{\Gamma_{\mathbb{R}} \left(1- s' \right)^2\Lambda_g \left(s'+t,\beta,\cos^{(a)} \right)}{\Gamma_{\mathbb{R}} \left( 1-s'-2~ \lfloor t/2\rfloor\right)^2} 
\left( \frac{y}{N \alpha} \right)^{\frac{1}{2}-s'}ds'.
\end{multline}
Denote by $\textbf{1}_{(0,1)}$ the indicator function for the interval $(0,1)$. 
For $y>0$, we introduce the following function:
\begin{multline}\label{eq.smallyestimate}
	F_\alpha(y) := \sum_{a \in \{ 0, 1\}}i^{-a} \sum_{\substack{t = 0\\
	t \equiv a + \epsilon \text{ mod }2}}^{2\ell_0 -1} \frac{(2\pi i N \alpha)^t}{t!} \\
	\cdot \frac{1}{2\pi i}\int_{\mathrm{Re}(s')=2}  \frac{\Gamma_{\mathbb{R}}(1-s')^2\Lambda_g \left( s'+t,\beta, \cos^{(a)} \right)}{\Gamma_{\mathbb{R}}\left(1-s'-2 \lfloor t/2\rfloor \right)^2}  \left(\frac{y}{N \alpha} \right)^{\frac{1}{2}-s'}ds' \\
	- \tilde{f}(\alpha + i \alpha y) + \textbf{1}_{(0,1)}(y) \sum_{k = 0}^{\ell_0 -1} \left( -2\log(y)\mathcal{I}_k(\alpha) + \widetilde{\mathcal{I}}_k(\alpha) \right) y^{ 2k + \frac{1}{2}}.
\end{multline}
Using equations~\eqref{eq.circintspecial}, \eqref{eq.circintexpand}, and \eqref{eq.BCK}, and noting that $y^{2\ell_0+\frac12}\log(y)=O\left(y^{2\ell_0-1}\right)$ as $y\rightarrow0^+$, we deduce that $F_{\alpha}(y)=O\left(y^{2\ell_0-1}\right)$ as $y\rightarrow 0^+$.
On the other hand, shifting the contour in equation~\eqref{eq.smallyestimate} to the right, we see that $F_{\alpha}(y)$ decays rapidly as $y\rightarrow\infty$.
Using the theory of Mellin transforms, we deduce that the following integral transform is in $\mathcal{H}\left(\frac32 - 2\ell_0,\infty\right)$:
\[
\mathcal{M}\left(F_{\alpha}\right)(s)=\int_0^{\infty}F_{\alpha}(y)(\alpha y)^{s-\frac12}\frac{dy}{y}.
\]
The result follows upon identification of $\mathcal{M}\left(F_{\alpha}\right)(s)$ with $M_\alpha(s)$, which is achieved as follows. 
Firstly, note that:
\begin{multline}\label{eq.mellinIItilde}
	\int_0^\infty \textbf{1}_{(0,1)}(y) \sum_{k = 0}^{\ell_0 -1} \left( -2\log\left(y\right)\mathcal{I}_k(\alpha) + \widetilde{\mathcal{I}}_k(\alpha) \right) y^{2k+\frac12}(\alpha y)^{s-\frac12} \frac{dy}{y} \\
=\alpha^{s-\frac{1}{2}} \int_0^\infty \textbf{1}_{(0,1)}(y) \sum_{k = 0}^{\ell_0 -1} \left( -2\log\left(y\right)\mathcal{I}_k(\alpha) + \widetilde{\mathcal{I}}_k(\alpha) \right) y^{s+2k}\frac{dy}{y} \\
	= \alpha^{s-\frac{1}{2}} \sum_{k = 0}^{\ell_0-1} \left(\frac{\widetilde{\mathcal{I}}_k(\alpha)}{s+2k}+2\frac{\mathcal{I}_k(\alpha)}{(s+2k)^2} \right).
\end{multline} 
Secondly, using Mellin inversion, we calculate: 
\begin{multline}\label{eq.MellinInverseMellin}
\int_0^{\infty}\left(\sum_{a \in \{ 0, 1\}}i^{-a} \sum_{\substack{t = 0\\ t \equiv a + \epsilon \text{ mod }2}}^{2\ell_0 -1} \frac{(2\pi i N \alpha)^t}{t!}\right. \\
	\left.\cdot \frac{1}{2\pi i}\int_{\mathrm{Re}(s')=2}  \frac{\Gamma_{\mathbb{R}}(1-s')^2\Lambda_g \left( s'+t,\beta, \cos^{(a)} \right)}{\Gamma_{\mathbb{R}}\left(1-s'-2 \lfloor t/2\rfloor \right)^2}  \left(\frac{y}{N \alpha} \right)^{\frac{1}{2}-s'}ds'\right)(\alpha y)^{s-\frac12}\frac{dy}{y}\\
=(N\alpha ^2)^{s-\frac{1}{2}} \sum_{a \in \{ 0, 1\}}i^{-a} \sum_{\substack{t = 0\\ t \equiv a + \epsilon \text{ mod }2}}^{2\ell_0 -1}\frac{(2\pi i N \alpha)^t\Gamma_{\mathbb{R}}(1-s)^2\Lambda_g \left( s+t,\beta, \cos^{(a)} \right)}{t!\Gamma_{\mathbb{R}}\left(1-s-2~\lfloor t/2\rfloor \right)^2} .
\end{multline}
In order to evaluate the remaining integral, we will apply \cite[6.561(16)]{IntegralTables} with $\mu=s-1$, $\nu=0$, and $a=2\pi\alpha$, which is valid for for $\mathrm{Re}(\mu+1\pm\nu)=\mathrm{Re}(s)>0$:
	\begin{equation} \label{eq:integral_bessel_function0}
		4 \int_0^\infty K(2\pi \alpha y)y^s \frac{dy}{y} = \left(\pi\alpha\right)^{-s} \Gamma\left( \frac{s}{2} \right)^2=\alpha^{-s}\Gamma_{\mathbb{R}}(s)^2.
	\end{equation}
For $n>0$, we recall that $a_n=(-1)^{\epsilon}a_{-n}$. Using equations~\eqref{eq:tilde},~\eqref{eq:additive_twists_defn}, and~\eqref{eq:integral_bessel_function0}, for $\mathrm{Re}(s)>1+\sigma>0$, we compute:
\begin{equation}\label{eq.Mellinftilde}
\begin{split}
&\int_0^{\infty}\tilde{f}(\alpha + i \alpha y) (\alpha y)^{s-\frac12}\frac{dy}{y}\\
=&\int_0^{\infty}\frac{1}{2}\sum_{n\neq 0}a_n\sqrt{\alpha y}K(2\pi n\alpha y)\exp(2\pi in\alpha)(\alpha y)^{s-\frac12}\frac{dy}{y}\\
=&\frac{\alpha^{s}}{2}\sum_{n\neq0}a_n\exp(2\pi in\alpha)|n|^{-s}\int_0^{\infty}K(2\pi \alpha y)y^s\frac{dy}{y}\\
=&\frac{\alpha^{s}}{2}\sum_{n=1}^{\infty}a_n\left(\exp(2\pi in\alpha)+(-1)^{\epsilon}\exp(-2\pi in\alpha)\right)n^{-s}\int_0^{\infty}K(2\pi \alpha y)y^s\frac{dy}{y}\\
=&\alpha^{s}i^{-\epsilon}\sum_{n=1}^{\infty}a_n\cos^{(\epsilon)}(2\pi in\alpha)n^{-s}\int_0^{\infty}K(2\pi \alpha y)y^s\frac{dy}{y}\\
=&i^{-\epsilon}\sum_{n=1}^{\infty}a_n\cos^{(\epsilon)}(2\pi in\alpha)n^{-s}\Gamma_{\mathbb{R}}(s)^2\\
=&i^{-\epsilon}\pi^{\epsilon}L_f\left(s,\alpha,\cos^{(\epsilon)}\right)\Gamma_{\mathbb{R}}(s+[\epsilon+\epsilon])^2\\
=&i^{-\epsilon} \pi^\epsilon \Lambda_f \left(s,\alpha,\cos^{(\epsilon)}\right).
\end{split}
\end{equation}
Since the functions in equation~\eqref{eq.Mellinftilde} extend to meromorphic functions on $\mathrm{Re}(s)>\frac32-2\ell_0$, the identity remains valid within this larger domain away from any poles.
Combining equations \eqref{eq.smallyestimate}, \eqref{eq.mellinIItilde}, \eqref{eq.MellinInverseMellin}, and \eqref{eq.Mellinftilde}, we deduce that $M_\alpha(s) = \mathcal{M}\left (F_\alpha\right)(s)$.
\end{proof}
Let $\beta = \frac{u}{v} \in \mathbb{Q}^{\times}$, where $(u,v)=1$ and $v > 0$. 
As in \cite{WCTWP,TCTMF}, we introduce the infinite set 
 \begin{equation*}\label{eq.Tbeta}
 	T_{\beta} := \left\{ \frac{p}{u} \in \mathbb{Q}_{>0}: \space p \equiv u \mod v, \; p \in \mathcal{P}  \right\}.
 \end{equation*} 
The set $T_\beta$ is unbounded by assumption on $\mathcal{P}$. 
An important property is that if $\lambda \in T_\beta$ then $\Lambda_g\left(s,\lambda \beta, \cos^{(r)}\right) = \Lambda_g\left(s,\beta, \cos^{(r)}\right)$. Given $t_0 \in \mathbb{Z}_{\geq 0}$, choose $\ell_0\in\Z_{>0}$ such that $2\ell_0>t_0$. Consider any subset $T_{\beta,M} \subset T_\beta$ such that $ \left\vert T_{\beta,M} \right\vert =M \geq 2\ell_0 > t_0$. By the theory of Vandermonde determinants, for each $\lambda \in T_{\beta,M}$ there exists $c_\lambda \in \mathbb{C}$ such that 
\begin{equation}	\label{lambdas}
	\sum_{\lambda \in T_{\beta,M}} c_\lambda \lambda^{-t} = \delta_{t_0}(t), \; \; t \in \{ 0,1, \dots, M-1 \},
\end{equation}
where $\delta_{t_0}(t)$ is the Kronecker delta function. 
\begin{lem}	\label{lem:holomorphic_function}
Let $\alpha \in \mathbb{Q}_{>0}$, $t_0 \in \mathbb{Z}_{\geq 0}$, $T_{\beta,M}\subset T_{\beta}$ of size $M \geq 2\ell_0 > t_0$, and choose $c_\lambda \in \mathbb{C}$ to satisfy equation~\eqref{lambdas}. 
The following function is in $\mathcal{H}\left(t_0+\frac32-2\ell_0,\infty\right)$:
\begin{multline} \label{long_lambda}
 		i^{-[\epsilon + t_0]}(N \alpha^2)^{s-\frac{1}{2}}\alpha^{-t_0} \frac{(2 \pi i)^{t_0}\Gamma_{\mathbb{R}}(1-s+t_0)^2\Lambda_g \left( s,\beta,\cos^{([\epsilon + t_0])} \right)}{t_0!\Gamma_{\mathbb{R}}\left(1-s+[t_0] \right)^2}  \\
 		- \sum_{\lambda \in T_{\beta,M}}c_\lambda \lambda^{2s-2t_0-1} \Bigg[ (-i \pi)^\epsilon \Lambda_f\left(s-t_0,\alpha \lambda^{-1},\cos^{(\epsilon)}\right) \\
 		- (\lambda^{-1}\alpha)^{s-t_0-\frac{1}{2}}  \sum_{k=0}^{\ell_0-1} \left( \frac{\widetilde{\mathcal{I}}_k(\alpha \lambda^{-1})}{s-t_0+2k} + 2\frac{\mathcal{I}_k(\alpha \lambda^{-1})}{(s-t_0+2k)^2} \right) \Bigg].
\end{multline}
\end{lem}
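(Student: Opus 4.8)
The plan is to realise the function displayed in~\eqref{long_lambda} as a finite $\mathbb{C}$-linear combination of translated copies of the functions $M_{\alpha'}(s)$ produced by Lemma~\ref{hol_lem_1}, the coefficients being $c_\lambda\lambda^{2(s-t_0)-1}$, and then to invoke the membership $M_{\alpha'}\in\mathcal{H}\left(\frac32-2\ell_0,\infty\right)$ together with the elementary facts that a translate $f(s-t_0)$ of an element of $\mathcal{H}(a,b)$ lies in $\mathcal{H}(a+t_0,b+t_0)$, that $\mathcal{H}(a,b)$ is closed under finite sums, and that it is stable under multiplication by entire, nowhere-vanishing functions of at most polynomial growth in vertical strips. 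The Vandermonde relation~\eqref{lambdas} is exactly what collapses the double sum over $a$ and $t$ built into $M_{\alpha'}$ down to the single term appearing on the first line of~\eqref{long_lambda}.

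First I would, for each $\lambda\in T_{\beta,M}$, apply Lemma~\ref{hol_lem_1} with $\alpha$ replaced by $\alpha\lambda^{-1}\in\mathbb{Q}_{>0}$, keeping $\ell_0$ fixed. The quantity called $\beta$ in that Lemma then becomes $-1/(N\alpha\lambda^{-1})=\lambda\beta$, and since $\lambda\in T_\beta$ the relation $\Lambda_g\left(s,\lambda\beta,\cos^{(a)}\right)=\Lambda_g\left(s,\beta,\cos^{(a)}\right)$ shows that the $\Lambda_g$-data occurring in $M_{\alpha\lambda^{-1}}$ is independent of $\lambda$. Hence $M_{\alpha\lambda^{-1}}(s)\in\mathcal{H}\left(\frac32-2\ell_0,\infty\right)$, so that $M_{\alpha\lambda^{-1}}(s-t_0)$ lies in $\mathcal{H}\left(t_0+\frac32-2\ell_0,\infty\right)$.

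Next I would verify that the function in~\eqref{long_lambda} equals $\sum_{\lambda\in T_{\beta,M}}c_\lambda\lambda^{2(s-t_0)-1}M_{\alpha\lambda^{-1}}(s-t_0)$ by substituting formula~\eqref{eq.M}. In the double sum over $a$ and $t$ one separates the prefactor $\left(N(\alpha\lambda^{-1})^2\right)^{(s-t_0)-\frac12}(2\pi i N\alpha\lambda^{-1})^t$ into its $\lambda$-free part $(N\alpha^2)^{s-t_0-\frac12}(2\pi i N\alpha)^t$ and the factor $\lambda^{1-2(s-t_0)-t}$; multiplying by $c_\lambda\lambda^{2(s-t_0)-1}$ cancels all $s$-dependent powers of $\lambda$ and leaves $c_\lambda\lambda^{-t}$. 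Since $t$ runs only over $\{0,\dots,2\ell_0-1\}\subseteq\{0,\dots,M-1\}$, summing over $\lambda$ and applying~\eqref{lambdas} annihilates every term with $t\neq t_0$ and retains $t=t_0$, which forces $a=[\epsilon+t_0]$; simplifying this surviving term with $t_0-2\lfloor t_0/2\rfloor=[t_0]$ and $(N\alpha^2)^{-t_0}(N\alpha)^{t_0}=\alpha^{-t_0}$ yields exactly the first line of~\eqref{long_lambda}. The two remaining summands of $M_{\alpha\lambda^{-1}}(s-t_0)$, namely $-i^{-\epsilon}\pi^\epsilon\Lambda_f\left(s-t_0,\alpha\lambda^{-1},\cos^{(\epsilon)}\right)$ and the $\mathcal{I}_k,\widetilde{\mathcal{I}}_k$ terms, once multiplied by $c_\lambda\lambda^{2(s-t_0)-1}$ and summed over $\lambda$, reproduce the bracketed sum in~\eqref{long_lambda}, where one uses $(-i)^\epsilon=i^{-\epsilon}$ for $\epsilon\in\{0,1\}$ to identify $(-i\pi)^\epsilon$ with $i^{-\epsilon}\pi^\epsilon$. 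Finally, each $\lambda$ being a fixed positive rational, $s\mapsto\lambda^{2(s-t_0)-1}$ is entire, nowhere vanishing, and bounded on every vertical strip of finite width, so multiplication by it preserves membership in $\mathcal{H}\left(t_0+\frac32-2\ell_0,\infty\right)$; as $T_{\beta,M}$ is finite, the assertion follows.

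I expect the only real work to be the bookkeeping in this last step: tracking every power of $\lambda$, $\alpha$, $N$ and $2\pi i$, and every shift inside a $\Gamma_{\mathbb{R}}$-factor, through the substitution $s\mapsto s-t_0$, so that the collapsed $t=t_0$ term lands exactly on the first line of~\eqref{long_lambda}. Nothing conceptually deep occurs, but it is easy to misplace a factor. The two constraints to respect are that $\ell_0$ be chosen with $2\ell_0>t_0$ and held fixed for all $\lambda$, so that the $M_{\alpha\lambda^{-1}}$ all lie in one class $\mathcal{H}\left(\frac32-2\ell_0,\infty\right)$, and that $M\geq2\ell_0$, so that the Vandermonde relation~\eqref{lambdas} covers the whole range of $t$ occurring in~\eqref{eq.M}.
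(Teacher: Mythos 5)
Your proposal is correct and follows essentially the same route as the paper: the paper's proof likewise writes the function in~\eqref{long_lambda} as $\sum_{\lambda \in T_{\beta,M}}c_\lambda \lambda^{2s-2t_0-1}M_{\lambda^{-1}\alpha}(s-t_0)$, invokes Lemma~\ref{hol_lem_1} for each $M_{\lambda^{-1}\alpha}$, and uses the relation $\Lambda_g\left(s,\lambda\beta,\cos^{(a)}\right)=\Lambda_g\left(s,\beta,\cos^{(a)}\right)$ together with~\eqref{lambdas} and $t_0-2\lfloor t_0/2\rfloor=[t_0]$ to collapse the double sum to the first line. Your additional bookkeeping (cancellation of the powers of $\lambda$, the identity $(N\alpha^2)^{-t_0}(N\alpha)^{t_0}=\alpha^{-t_0}$, and stability of $\mathcal{H}$ under integer shifts, finite sums, and multiplication by $\lambda^{2s-2t_0-1}$) is accurate and matches what the paper leaves implicit.
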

\begin{proof}
The function in \eqref{long_lambda} is equal to $\sum_{\lambda \in T_{\beta,M}}c_\lambda \lambda^{2s-2t_0-1}M_{\lambda^{-1}\alpha}(s-t_0)$, where $M_{\lambda^{-1}\alpha}(s-t_0)$ is as in equation~\eqref{eq.M}. The result is a consequence of Lemma \ref{hol_lem_1}.  For the first term we apply \eqref{lambdas} as follows:
\begin{multline*}
	\sum_{\lambda \in T_{\beta,M}}c_\lambda \lambda^{2s-1} \left(N \left(\lambda^{-1}\alpha \right)^2 \right)^{s-\frac{1}{2}} \sum_{a \in \{0,1\}}i^{-a} \sum_{\substack{t = 0 \\ t \equiv a + \epsilon \mod 2}}^{2\ell_0-1} \frac{(2 \pi i N \lambda^{-1} \alpha)^t}{t!}\\
\cdot\frac{ \Gamma_{\mathbb{R}}(1-s)^2\Lambda_g \left(s+t,\lambda \beta, \cos^{(a)} \right)}{\Gamma_{\mathbb{R}}\left(1-s-2~\lfloor t/2\rfloor \right)^2}\\
= i^{-[\epsilon + t_0]}\left( N \alpha^2\right)^{s-\frac{1}{2}} \frac{(2 \pi i N \alpha)^{t_0}\Lambda_g \left( s+t_0,\beta,\cos^{([\epsilon + t_0])} \right)\Gamma_{\mathbb{R}}(1-s)^2}{t_0!\Gamma_{\mathbb{R}}\left(1-s-2~\lfloor t_0/2\rfloor\right)^2} .
\end{multline*}
To deduce equation~\eqref{long_lambda}, we use that $t_0 - 2~\lfloor t_0/2\rfloor = [t_0]$.
\end{proof}
For $t\in\mathbb{Z}$, we define the following subset of $\mathcal{M}(a,b)$:
 \[
 	\mathcal{M}_t(a,b) = \{h \in \mathcal{M}(a,b) : h \text{ is holomorphic at } s \in 2\mathbb{Z} +t+1 \}. 
 \]
Taking $M=2\ell_0+1$, Lemma~\ref{lem:holomorphic_function} implies that the following function is in $\mathcal{M}_{t_0}\left(t_0-M+\frac52,\infty\right)$:
\begin{multline}\label{merom_1}
	i^{-[\epsilon + t_0]}(N \alpha^2)^{s-\frac{1}{2}}\alpha^{-t_0} \frac{(2 \pi i)^{t_0}\Gamma_{\mathbb{R}}(1-s+t_0)^2\Lambda_g \left( s,\beta,\cos^{([\epsilon+t_0])} \right) }{t_0!\Gamma_{\mathbb{R}}\left(1-s+ [t_0] \right)^2}  \\
	-(-i \pi)^\epsilon \sum_{\lambda \in T_{\beta,M}}c_\lambda \lambda^{2s-2t_0-1}\Lambda_f \left( s-t_0,\alpha \lambda^{-1},\cos^{(\epsilon)}\right).
\end{multline}
\begin{prop}\label{prop.Atwistsmero}
Let $q \in \mathcal{P} \cup \{1\}$ and let $\beta=\frac{b}{Nq}$ for some $b \in \mathbb{Z}$ such that $(b,Nq) = 1$. Under the assumptions of Theorem \ref{thm:0Converse}, for any $r \in \{0,1\}$, the functions $\Lambda_f\left(s,\beta,\cos^{(r)}\right)$ and $\Lambda_g\left(s,\beta,\cos^{(r)}\right)$ continue to elements of $\mathcal{M}(-\infty,\infty)$.
\label{prop:twisted_mero_1}
\end{prop}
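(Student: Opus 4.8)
The plan is to extract the two families of twists from the meromorphic identity \eqref{merom_1}, controlling the level-$p$ twists occurring in it by the character-twist hypotheses of Theorem~\ref{thm:0Converse}. First I would record the denominators. Write $\beta=\tfrac{b}{Nq}$ with $(b,Nq)=1$ (so already in lowest terms, $u=b$, $v=Nq$) and $\alpha=-\tfrac1{N\beta}=-\tfrac qb$; then $\alpha$ has reduced denominator $|b|$ prime to $N$, and for $\lambda=\tfrac{p_\lambda}{b}\in T_\beta$ with $p_\lambda\in\mathcal P$ one computes $\alpha\lambda^{-1}=-\tfrac{q}{p_\lambda}$, a fraction whose reduced denominator is the prime $p_\lambda\in\mathcal P$ (coprime to $N$) — with the single exception, possible only if $q\in\mathcal P$, that $p_\lambda=q$, in which case $\alpha\lambda^{-1}=-1$ and $\Lambda_f(s-t_0,-1,\cos^{(\epsilon)})$ is a constant multiple of $\Lambda_f(s-t_0)$, hence harmless. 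Moreover it suffices to use $t_0\in\{0,1\}$ in \eqref{merom_1}, since then $[t_0]=t_0$, the gamma quotient $\Gamma_{\R}(1-s+t_0)^2/\Gamma_{\R}(1-s+[t_0])^2$ is identically $1$, and the coefficient of $\Lambda_g(s,\beta,\cos^{([\epsilon+t_0])})$ reduces to the nowhere-vanishing exponential $E_{t_0}(s):=i^{-[\epsilon+t_0]}(N\alpha^2)^{s-1/2}\alpha^{-t_0}(2\pi i)^{t_0}/t_0!$; as $t_0$ ranges over $\{0,1\}$, the index $[\epsilon+t_0]$ takes both values $0$ and $1$.

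Fix $t_0\in\{0,1\}$; for each $\ell_0\ge 1$ set $M=2\ell_0+1$ and choose $T_{\beta,M}\subset T_\beta$ with $c_\lambda$ as in \eqref{lambdas}. By Lemma~\ref{lem:holomorphic_function}, the function in \eqref{merom_1} — namely $E_{t_0}(s)\,\Lambda_g(s,\beta,\cos^{([\epsilon+t_0])})-(-i\pi)^\epsilon\sum_{\lambda\in T_{\beta,M}}c_\lambda\lambda^{2s-2t_0-1}\Lambda_f(s-t_0,-\tfrac{q}{p_\lambda},\cos^{(\epsilon)})$ — lies in $\mathcal M_{t_0}(t_0-M+\tfrac52,\infty)$. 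For each prime $p_\lambda\in\mathcal P$, Proposition~\ref{prop:functional_equation_twists} (equivalently \eqref{additive_twists_intermsofcharactertwists}) writes $\Lambda_f(s-t_0,-\tfrac{q}{p_\lambda},\cos^{(\epsilon)})$ as a finite $\mathbb C$-linear combination of $\Lambda_g(1-s+t_0,\bar\psi)$ over primitive characters $\psi$ of conductor $p_\lambda$, plus — precisely when $\epsilon$ is even — the extra term $\Lambda_f(s-t_0)-\tfrac{p_\lambda}{p_\lambda-1}\Lambda_f(s-t_0,\psi_0)$, where $\psi_0$ is the principal character mod $p_\lambda$. By the hypotheses of Theorem~\ref{thm:0Converse}, each $\Lambda_g(\,\cdot\,,\bar\psi)$ of conductor $p_\lambda\in\mathcal P$ continues meromorphically to $\mathbb C$; and $\Lambda_f$ does too, since $P(s)\Lambda_f(s)$ is entire. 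Solving \eqref{merom_1} for $\Lambda_g(s,\beta,\cos^{([\epsilon+t_0])})$ (dividing by $E_{t_0}(s)$) and letting $\ell_0\to\infty$ (so $t_0-M+\tfrac52\to-\infty$) then yields the meromorphic continuation of $\Lambda_g(s,\beta,\cos^{(r)})$, $r\in\{0,1\}$, to all of $\mathbb C$ — \emph{granted} the continuation of the principal-character terms. The claim for $\Lambda_f(s,\beta,\cos^{(r)})$ follows by the same argument with $f$ and $g$ exchanged, since the hypotheses of Theorem~\ref{thm:0Converse} are invariant under $(f,s,\psi)\leftrightarrow(g,1-s,\bar\psi)$ — in particular $P(1-s)\Lambda_g(s)$ is entire by \eqref{eq:FE} at $\psi=\textbf{1}$ — so \eqref{merom_1} admits a mirror image expressing $\Lambda_f$-twists of level $Nq$ through $\Lambda_g$-twists of prime level.

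The step requiring genuine work is the principal-character term for $\epsilon$ even: $\Lambda_f(s,\psi_0)$, $\psi_0$ principal mod $p$, is the completion of $\sum_{p\nmid n}a_nn^{-s}$, which is not among the twists assumed meromorphic and whose continuation is equivalent to that of the even-order level-$p$ additive twists themselves, so it cannot be obtained by formal manipulation of the identities at hand. To handle it I would exploit that $T_\beta$ furnishes arbitrarily many primes: applying \eqref{merom_1} for a fixed $t_0$ but for several distinct finite subsets $T_{\beta,M}\subset T_\beta$ (the $c_\lambda$ being constrained only through \eqref{lambdas}) gives linear relations among the functions $\Lambda_f(s-t_0,\psi_0^{(p_\lambda)})$ at distinct primes $p_\lambda$; these, together with the elementary identity $\Lambda_f(s)-\Lambda_f(s,\psi_0^{(p)})=\Gamma_{\R}(s+[\epsilon])^2\,p^{-s}\sum_m a_{pm}m^{-s}$, its $g$-analogue, and the functional equation \eqref{eq:FE}, close into a system pinning down the required continuation; for $\epsilon$ odd the term is absent from \eqref{additive_twists_intermsofcharactertwists} altogether, so the argument runs on the character-twist hypotheses alone. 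Finally, to land in $\mathcal M(-\infty,\infty)$ and not merely among meromorphic functions, one verifies that each pole introduced lies at an integer, has order at most two, and carries the boundedness in vertical strips already built into the membership of \eqref{merom_1} in $\mathcal M_{t_0}$ — these being inherited from the finitely many $\Lambda_f(\,\cdot\,,\psi)$, $\Lambda_g(\,\cdot\,,\bar\psi)$, $\Lambda_f$, $\Lambda_g$ entering the computation.
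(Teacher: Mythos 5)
You correctly isolate the real obstruction: when the relevant parity is even, the additive twists $\Lambda_f\left(s-t_0,\alpha\lambda^{-1},\cos^{(\epsilon)}\right)$ occurring in \eqref{merom_1} involve, through \eqref{additive_twists_intermsofcharactertwists}, the untwisted $\Lambda_f$ together with the principal-character twist $\Lambda_f\left(s-t_0,\psi_0\right)$ modulo $p_\lambda$, and the latter is covered by none of the hypotheses of Theorem~\ref{thm:0Converse}. But you do not resolve it. Your proposed fix (running \eqref{merom_1} over several subsets $T_{\beta,M}$ to get linear relations among the $\Lambda_f\left(s-t_0,\psi_0^{(p_\lambda)}\right)$ which \emph{close into a system}) is only a hope: every relation you can extract couples the unknown principal-character twists at distinct primes to each other and to further unknown series such as $\sum_m a_{pm}m^{-s}$, and you give no mechanism that eliminates them. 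The paper's device is different and performs the elimination automatically: it runs \eqref{merom_1} simultaneously for $\beta=\frac{b}{Nq}$ and $\beta'=\frac{b}{Nq'}$ (same numerator, so $T_\beta\cap T_{\beta'}$ is infinite and $\alpha\lambda^{-1}$, $\alpha'\lambda^{-1}$ have the same prime denominator) and subtracts. The troublesome terms in \eqref{functional_equations_additive_twists} depend only on that common modulus, not on the numerator, so they cancel identically in the difference, which is then a combination of $\Lambda_g(1-s+t_0,\bar{\psi})$ given by absolutely convergent Dirichlet series in a left half-plane, hence lies in $\mathcal{H}\left(-\infty,t_0-\sigma\right)$; this yields \eqref{eq.ggal} and \eqref{gamma_lambda_diff}, after which one divides by the gamma quotient, separates $\Lambda_g(s,\beta,\cdot)$ from $\Lambda_g(s,\beta',\cdot)$ by using two values of $t_0$ of the same parity (possible since $\alpha\neq\alpha'$), and lets $t_0\to\infty$ with fixed parity; reversing $f$ and $g$ then handles $\Lambda_f$.

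There is a second, independent gap. The conclusion is membership in $\mathcal{M}(-\infty,\infty)$, i.e.\ poles confined to $\mathbb{Z}$ of order at most two plus boundedness on vertical strips, and your closing paragraph claims these are inherited from the continued $\Lambda_f(\cdot,\psi)$ and $\Lambda_g(\cdot,\bar{\psi})$. But Theorem~\ref{thm:0Converse} assumes only that these twists are meromorphic and satisfy \eqref{eq:FE}; nothing whatsoever is assumed about the location or order of their poles or their growth, so nothing can be inherited from them --- this also undermines your claim that for $\epsilon=1$ the character-twist hypotheses alone suffice. In the paper the pole and growth control never comes from the assumed continuations: it comes from Lemma~\ref{lem:holomorphic_function} (the Mellin-transform construction behind \eqref{merom_1}) on one side, and from regions of absolute convergence after applying \eqref{eq:FE} and \eqref{functional_equations_additive_twists} on the other, which is precisely why the differencing step is indispensable rather than a convenience. (Two minor points: the exceptional case $p_\lambda=q$ you allow for cannot occur, since $p\equiv b \bmod Nq$ with $(b,Nq)=1$ forces $(p,Nq)=1$; and restricting to $t_0\in\{0,1\}$ would not suffice on the paper's route, where taking $t_0$ large of fixed parity is what pushes the right-hand edge $t_0-\sigma$ of the controlled region to $+\infty$.)
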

\begin{proof}
Consider first the function $\Lambda_g\left(s,\beta,\cos^{(r)}\right)$. If $b'\equiv -b$ mod $Nq$, then $\Lambda_g\left(s,\beta,\cos^{(r)}\right)=\Lambda_g\left(s,-\frac{b'}{Nq},\cos^{(r)}\right)$. Without loss of generality, we may therefore assume $\beta < 0$ and $-b \in \mathcal{P}$.  

Let $q' \in \mathcal{P}-\{q\}$ satisfy $(b,q') = 1$. Write $\beta' = \frac{b}{Nq'}$ and $\alpha'=-\frac{1}{N\beta}$. Since $\beta$ and $\beta'$ have the same numerator, the intersection $T_\beta \cap T_{\beta'}$ is infinite. For $t_0 \in \mathbb{Z}_{\geq0}$ we can thus choose a set $T_M \subset T_\beta \cap T_{\beta'}$ with $M > t_0$ elements and we can find $c_\lambda \in \C$ such that equation~\eqref{lambdas} is satisfied. Evaluating the function in equation \eqref{merom_1} at $\beta$ and $\beta'$ and taking the difference, we deduce that the following function is an element of $\mathcal{M}_{t_0}\left(t_0-M+\frac52,\infty\right)$:
	
\begin{multline}\label{eq.ggal}
		\frac{\Gamma_{\mathbb{R}}(1-s+t_0)^2}{\Gamma_{\mathbb{R}}\left(1-s+ [t_0] \right)^2}\left( \alpha^{2s-t_0-1}\Lambda_g\left(s,\beta,\cos^{([\epsilon+t_0])} \right) - \alpha'^{2s-t_0-1}\Lambda_g\left(s,\beta',\cos^{([\epsilon+t_0])} \right) \right) \\
		- \frac{i^{-[\epsilon+t_0]}(-i \pi)^\epsilon t_0!}{N^{s-\frac{1}{2}}(2 \pi i)^{t_0}} \sum_{\lambda \in T_M}c_\lambda \lambda^{2s-2t_0-1} \left( \Lambda_f \left(s-t_0,\alpha \lambda^{-1},\cos^{(\epsilon)} \right) - \Lambda_f \left(s-t_0,\alpha' \lambda^{-1},\cos^{(\epsilon)} \right)  \right).
\end{multline}
Note that the poles of $\Gamma_{\R}(s)$ lie in the region $\Re(s) < 1$. By assumption we have that $a_n = O\left(n^\sigma\right)$, for some $\sigma>0$. For all $\lambda\in T_M$, we see that $\Lambda_f\left(s,\alpha \lambda^{-1},\cos^{(\epsilon)}\right)$ is holomorphic for $\Re(s) > \sigma +1$. Using equation~\eqref{functional_equations_additive_twists}, we deduce that the following function is in $\mathcal{H}\left(-\infty,t_0-\sigma\right)$:
\begin{equation*}
	\Lambda_f \left( s-t_0,\alpha \lambda^{-1},\cos^{(\epsilon)} \right) - \Lambda_f \left( s-t_0,\alpha' \lambda^{-1},\cos^{(\epsilon)} \right).
\end{equation*}
For every $t_0 \in \mathbb{Z}_{\geq 0}$, equation~\eqref{eq.ggal} thus implies that the following function continues to an element of $\mathcal{M}_{t_0}\left(t_0-M+\frac52,t_0-\sigma\right)$:
\begin{equation}	\label{gamma_lambda_diff}
	\frac{\Gamma_{\mathbb{R}}(1-s+t_0)^2}{\Gamma_{\mathbb{R}}\left(1-s+[t_0] \right)^2}\left( \alpha^{2s-t_0-1}\Lambda_g\left(s,\beta,\cos^{([\epsilon+t_0])} \right) - \alpha'^{2s-t_0-1}\Lambda_g\left(s,\beta',\cos^{([\epsilon+t_0])} \right) \right).
\end{equation}
The function in equation \eqref{gamma_lambda_diff} is independent of $T_M$. Taking $M$ to be arbitrarily large, it follows that the function in equation \eqref{gamma_lambda_diff} is in $\mathcal{M}_{t_0}\left(-\infty,t_0-\sigma\right)$. We observe that the zeros of the following function have order $2$ and are contained in the set $2 \mathbb{Z}_{\geq 0} +1+ [t_0]$:
\begin{equation}\label{quotient_gamma}
	\frac{\Gamma_{\mathbb{R}}(1-s+t_0)^2}{\Gamma_{\mathbb{R}}\left(1-s+[t_0] \right)^2}.
\end{equation}
Dividing the function in equation~\eqref{gamma_lambda_diff} by that in \eqref{quotient_gamma}, we conclude that the following function is in $\mathcal{M}\left(-\infty,t_0-\sigma\right)$:
\begin{equation*}
 \alpha^{2s-t_0-1}\Lambda_g\left(s,\beta,\cos^{([\epsilon+t_0])} \right) - \alpha'^{2s-t_0-1}\Lambda_g\left(s,\beta',\cos^{([\epsilon+t_0])} \right).
\end{equation*}
Since $\alpha \neq \alpha'$, varying $t_0 \geq 0$ implies that $\Lambda_g\left( s,\beta,\cos^{\left([\epsilon + t_0] \right)}\right)$ continues to an element in $\mathcal{M}\left(-\infty,t_0-\sigma\right)$. Since $[t_0+\epsilon]$ depends only on the parity $[t_0]$, taking $t_0$ arbitrarily large whilst keeping $[t_0]$ constant implies that $\Lambda_g\left( s,\beta,\cos^{\left([\epsilon + t_0] \right)}\right)$ is in $\mathcal{M}(-\infty,\infty)$. Reversing the roles of $f$ and $g$ and repeating the argument above, we find the same for $\Lambda_f\left( s,\beta,\cos^{\left([\epsilon + t_0] \right)}\right)$.
\end{proof}

\begin{cor}\label{cor:sigafter}
Let $q \in \mathcal{P} \cup \{1\}$ and let $\beta=\frac{b}{q}$ for some $b \in \mathbb{Z}$ such that $(b,q) = 1$. Under the assumptions of Theorem \ref{thm:0Converse}, for any $r \in \{0,1\}$, the functions $\Lambda_f \left(s,\beta,\cos^{(r)}\right)$ and $\Lambda_g \left(s,\beta,\cos^{(r)}\right)$ continue to elements of $\mathcal{M}(-\infty,\infty)$.
	\label{cor:twisted_mero}
\end{cor}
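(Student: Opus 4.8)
The plan is to run the argument that proved Proposition~\ref{prop.Atwistsmero}, but now with $\beta$ of denominator $q$ rather than $Nq$. Since the additive twists depend only on $\beta\bmod\mathbb{Z}$, I would first replace $b$ by a congruent integer so that $b<0$ (a nonzero integer when $q=1$), making $\alpha:=-1/(N\beta)=-q/(Nb)\in\mathbb{Q}_{>0}$. For each $t_0\in\mathbb{Z}_{\geq0}$, take $M=2\ell_0+1>t_0$ and $c_\lambda\in\mathbb{C}$ satisfying \eqref{lambdas} for the elements $\lambda$ of a set $T_{\beta,M}\subset T_\beta$ of size $M$ (such a set exists because $(b,q)=1$ forces $T_\beta$ to be infinite). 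Then Lemma~\ref{hol_lem_1} and Lemma~\ref{lem:holomorphic_function} supply, via equation~\eqref{merom_1}, a function in $\mathcal{M}_{t_0}\!\left(t_0-M+\tfrac52,\infty\right)$ whose first term is an explicit nowhere‑zero entire factor times the gamma quotient~\eqref{quotient_gamma} times $\Lambda_g\!\left(s,\beta,\cos^{([\epsilon+t_0])}\right)$, and whose second term is a finite $\mathbb{C}$‑linear combination of the twists $\lambda^{2s-2t_0-1}\Lambda_f\!\left(s-t_0,\alpha\lambda^{-1},\cos^{(\epsilon)}\right)$ over $\lambda\in T_{\beta,M}$.

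The crucial point is how these auxiliary twists are controlled. Writing $\lambda=p_\lambda/|b|$ with $p_\lambda\in\mathcal{P}$ and $p_\lambda\equiv|b|\bmod q$, one has $\alpha\lambda^{-1}=q/(Np_\lambda)$; since $q\in\mathcal{P}\cup\{1\}$ is coprime to $N$ and distinct from the prime $p_\lambda$, the number $q/(Np_\lambda)$ is exactly of the shape $b''/(Nq'')$ with $(b'',Nq'')=1$ and $q''=p_\lambda\in\mathcal{P}$, so Proposition~\ref{prop.Atwistsmero} itself applies and gives $\Lambda_f\!\left(s-t_0,\alpha\lambda^{-1},\cos^{(\epsilon)}\right)\in\mathcal{M}(-\infty,\infty)$ for every $\lambda$; hence the second term of~\eqref{merom_1} lies in $\mathcal{M}(-\infty,\infty)$. (In the proof of Proposition~\ref{prop.Atwistsmero} the corresponding twists had prime conductor $p_\lambda$ and were instead handled through the functional equation of Proposition~\ref{prop:functional_equation_twists}; that route is closed here because $N\mid Np_\lambda$, which is precisely why one must bootstrap off Proposition~\ref{prop.Atwistsmero}.) Subtracting the second term from~\eqref{merom_1} and dividing out the entire factor and the gamma quotient~\eqref{quotient_gamma} — whose double zeros lie among the integers $\equiv 1+t_0\pmod 2$ — then expresses $\Lambda_g\!\left(s,\beta,\cos^{([\epsilon+t_0])}\right)$ as an element of $\mathcal{M}$ on the half‑plane $\Re(s)>t_0-M+\tfrac52$; letting $M\to\infty$ and then $t_0\to\infty$ through a fixed class modulo $2$ would give $\Lambda_g(s,\beta,\cos^{(r)})\in\mathcal{M}(-\infty,\infty)$ for $r\in\{0,1\}$, and interchanging the roles of $f$ and $g$ would give the statement for $\Lambda_f(s,\beta,\cos^{(r)})$.

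The step I expect to be the main obstacle is the division by~\eqref{quotient_gamma}: it is legitimate only if the second term of~\eqref{merom_1} is holomorphic at the double zeros of~\eqref{quotient_gamma}, i.e.\ lies in $\mathcal{M}_{t_0}$ rather than merely in $\mathcal{M}$. Through the shift by $t_0$ these zeros correspond to a finite set of negative integers, so what is needed is that the auxiliary completed twists $\Lambda_f\!\left(\cdot,q/(Np_\lambda),\cos^{(\epsilon)}\right)$ have no poles at those integers — in fact that all their poles lie in $\{0,1\}$. This strengthens the bare conclusion of Proposition~\ref{prop.Atwistsmero}, but should follow from absolute convergence of the defining Dirichlet series for $\Re(s)>1+\sigma$ (so, as $\sigma<1$, from the absence of integer poles with $\Re(s)\geq2$) together with the reflection in $s\mapsto 1-s$ that those twists inherit. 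The safest alternative, which I would actually use, is to retain verbatim from the proof of Proposition~\ref{prop.Atwistsmero} the device of subtracting the analogous expression built from a second modulus $\beta'=b/q'$ with $q'\in\mathcal{P}\setminus\{q\}$: the difference again involves only auxiliary twists of conductor of the form $Np_\lambda$, all covered by Proposition~\ref{prop.Atwistsmero}, and the cancellation restores membership in $\mathcal{M}_{t_0}$ exactly as in the model proof. Finally, the degenerate case $q=1$ (where $\beta$ is an integer, so the twist is by $0$ and the odd‑order twists vanish) is subsumed under the same computation run with $\beta$ a nonzero integer.
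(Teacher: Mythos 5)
Your reduction to $\beta<0$ with $-b\in\mathcal{P}$, and the key observation that for $\lambda\in T_{\beta,M}$ one has $\alpha\lambda^{-1}=-q/(Np)$, a rational of the exact shape covered by Proposition~\ref{prop.Atwistsmero}, so that the auxiliary twists $\Lambda_f\left(s-t_0,\alpha\lambda^{-1},\cos^{(\epsilon)}\right)$ lie in $\mathcal{M}(-\infty,\infty)$ and equation~\eqref{merom_1} can be exploited, is exactly the paper's argument. The gap is in your endgame. For $t_0\geq 2$ the division by~\eqref{quotient_gamma} is indeed not legitimate, and neither of your proposed repairs works. First, Proposition~\ref{prop.Atwistsmero} only gives membership of $\mathcal{M}(-\infty,\infty)$, i.e.\ possible double poles at \emph{every} integer; since the parameter $-q/(Np)$ has denominator divisible by $N$, Proposition~\ref{prop:functional_equation_twists} does not apply (as you yourself observe), so these twists are not known to ``inherit'' any reflection $s\mapsto1-s$, and nothing at this stage confines their poles to $\{0,1\}$ --- indeed Lemma~\ref{lem:twistssimplepoles} and the subsequent Proposition are devoted precisely to controlling residues of such additive twists at all integers $s_0<1$, which would be vacuous if your claim held. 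Second, the $\beta'=b/q'$ subtraction does not ``restore membership in $\mathcal{M}_{t_0}$'': in the proof of Proposition~\ref{prop.Atwistsmero} the holomorphy of the $f$-difference on a left half-plane came from the functional equation~\eqref{functional_equations_additive_twists}, available there because the auxiliary twists had prime denominator $p$ coprime to $N$ and because the $\psi_0$- and untwisted terms cancel; here the difference $\Lambda_f\left(s-t_0,-\tfrac{q}{Np},\cos^{(\epsilon)}\right)-\Lambda_f\left(s-t_0,-\tfrac{q'}{Np},\cos^{(\epsilon)}\right)$ is merely a difference of two elements of $\mathcal{M}(-\infty,\infty)$ with no cancellation mechanism, so it need not be holomorphic at the points of $2\mathbb{Z}+t_0+1$, and dividing by~\eqref{quotient_gamma} could still produce poles of order four (concretely, at $s=1$ when $t_0\geq2$ is even).

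The fix is much simpler and is what the paper does: take $t_0\in\{0,1\}$ only. Then $[t_0]=t_0$, the quotient~\eqref{quotient_gamma} is identically $1$, and equation~\eqref{merom_1} exhibits a nowhere-vanishing elementary factor times $\Lambda_g\left(s,\beta,\cos^{([\epsilon+t_0])}\right)$ as an element of $\mathcal{M}\left(t_0-M+\tfrac52,\infty\right)$; letting $M\to\infty$ gives $\mathcal{M}(-\infty,\infty)$, both parities $r\in\{0,1\}$ are reached as $t_0$ runs over $\{0,1\}$, and reversing $f$ and $g$ finishes the proof. Note that no limit $t_0\to\infty$ is needed here: unlike in Proposition~\ref{prop.Atwistsmero}, where the subtraction device imposed the right-hand cutoff $t_0-\sigma$, the region in~\eqref{merom_1} already extends to $+\infty$, so large $t_0$ buys you nothing and only creates the gamma-quotient problem you were trying to circumvent.
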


\begin{proof}
As in the proof of Proposition \ref{prop:twisted_mero_1}, we may assume that $\beta <0$ and $-b \in \mathcal{P}$. 
Let $t_0 \in \mathbb{Z}_{\geq0}$, $M>t_0$ and consider $T_{\beta,M} \subset T_\beta$ of cardinality $M$ satisfying \eqref{lambdas}. We have $\alpha = -\frac{q}{Nb}$ and so, if $\lambda \in T_{\beta,M}$, then $\alpha \lambda^{-1} = -\frac{q}{Np}$. Applying Proposition \ref{prop:twisted_mero_1}, we see that $\Lambda_f \left( s-t_0,\alpha \lambda^{-1},\cos^{(\epsilon)}\right)$ continues to an element in $\mathcal{M}(-\infty,\infty)$. By equation \eqref{merom_1} we know that $\Lambda_g \left(s,\beta,\cos^{([\epsilon+t_0])} \right)$ is in $\mathcal{M}\left(t_0-M+\frac52,\infty\right)$ for $t_0 \in \{0,1\}$. Taking $M$ to be arbitrarily large, we conclude that $\Lambda_g \left(s,\beta,\cos^{(r)} \right)$ is in $\mathcal{M}(-\infty,\infty)$. Reversing the roles of $f$ and $g$ and repeating the argument above, the same follows for $\Lambda_g \left(s,\beta,\cos^{(r)}\right)$. 
\end{proof}
Taking $b=q=1$ in Corollary \ref{cor:twisted_mero}, we see that $\Lambda_f(s)$ and $\Lambda_g(s)$ are holomorphic away from at most double poles at $s\in\mathbb{Z}$ and bounded in vertical strips. 
On the other hand, recall that the Dirichlet series defining $L_f(s)$ and $L_g(s)$ in equation~\eqref{eq:L} converge for $\Re(s)>1+\sigma$, 
and so $\Lambda_f(s)$ and $\Lambda_g(s)$ are holomorphic for $\Re(s)>1+\sigma$. 
The functional equation~\eqref{eq:FE} moreover implies that $\Lambda_f(s)$ and $\Lambda_g(s)$ are holomorphic for $\Re(s)<-\sigma$. 
In Theorem~\ref{thm:0Converse} we assume that $0<\sigma<1$, and so we deduce that $\Lambda_f(s)$ and $\Lambda_g(s)$ are holomorphic away from at most double poles at $s\in\{0,1\}$.
In particular, we see that assumption (1) in Theorem~\ref{thm:EntireConverse} is valid.

\begin{lem}\label{lem:res_1}
Assume that $\alpha \in \mathbb{Q}_{>0}$ and $\beta = -1/N\alpha$ are such that, for $r\in\{0,1\}$, the functions $\Lambda_g \left( s, \beta, \cos^{(r)}\right)$ and $\Lambda_f \left( s, \alpha, \cos^{(r)}\right)$ continue to elements of $\mathcal{M}(-\infty, \infty)$. For $s_0 \in \mathbb{Z}_{<1}$, choose $t_0 \in \mathbb{Z}_{>1}$ such that $[t_0] = [s_0]$ and write $j = \frac{1}{2}(t_0-s_0)$. Let $T_{\beta,M} \subset T_\beta$ be a set of size $M \geq 2\ell_0 >t_0-s_0+3/2 > t_0-s_0 = 2j>0$ and choose $c_{\lambda}\in \C$ satisfying equation \eqref{lambdas}. If $\epsilon = 1$, then 
\begin{multline}\label{messy_1}
i \pi \sum_{\lambda \in T_{\beta,M}} c_\lambda \lambda^{2s_0-2t_0-1} \Res_{s=s_0} (s-s_0) \Lambda_f \left(s-t_0,\alpha \lambda^{-1},\sin \right)  		\\	
= i^{-[1+t_0]} (N\alpha^2)^{s_0-\frac{1}{2}} \alpha^{-t_0} \frac{(2 \pi i)^{t_0}\Gamma_{\mathbb{R}}(1-s_0+t_0)^2}{t_0!\Gamma_{\mathbb{R}}\left(1-s_0+ [t_0] \right)^2} \Res_{s=s_0} (s-s_0) \Lambda_g \left(s,\beta,\cos^{([1+t_0])} \right)  \\
+ (-1)^j \frac{\sqrt{\pi}}{(j!)^2}\alpha^{s_0-t_0-1} \Big( \delta_0(s_0) G_j'(1) \Res_{s=1}(s-1)\Lambda_f(s) + \delta_0(s_0) G_j(1) \log(\alpha) \Res_{s=1}(s-1)\Lambda_f(s) \\
-G_j(1) \sum_{\lambda \in T_{\beta,M}}c_\lambda \lambda^{s_0-t_0}\log(\lambda) \Res_{s=1}(s-1)\Lambda_f(s) + G_j(1) \delta_0(s_0) \Res_{s=1}\Lambda_f(s)  \Big).
\end{multline}
\end{lem}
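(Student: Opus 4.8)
The plan is to apply the functional $h\mapsto\Res_{s=s_0}(s-s_0)h(s)$, which, when $h$ has at most a double pole at $s_0$, returns the coefficient of $(s-s_0)^{-2}$, to the two descriptions of the function in \eqref{merom_1}. With $\epsilon=1$, so that $\cos^{(1)}=-\sin$ and $\Lambda_f(\,\cdot\,,\cos^{(1)})=-\Lambda_f(\,\cdot\,,\sin)$, that function equals
\[
i^{-[1+t_0]}(N\alpha^2)^{s-\frac12}\alpha^{-t_0}\frac{(2\pi i)^{t_0}\Gamma_{\mathbb{R}}(1-s+t_0)^2\Lambda_g\left(s,\beta,\cos^{([1+t_0])}\right)}{t_0!\,\Gamma_{\mathbb{R}}(1-s+[t_0])^2}-i\pi\sum_{\lambda\in T_{\beta,M}}c_\lambda\lambda^{2s-2t_0-1}\Lambda_f\left(s-t_0,\alpha\lambda^{-1},\sin\right);
\]
on the other hand it equals the function in \eqref{long_lambda} minus
\[
\sum_{\lambda\in T_{\beta,M}}c_\lambda\lambda^{2s-2t_0-1}(\lambda^{-1}\alpha)^{s-t_0-\frac12}\sum_{k=0}^{\ell_0-1}\left(\frac{\widetilde{\mathcal{I}}_k(\alpha\lambda^{-1})}{s-t_0+2k}+\frac{2\mathcal{I}_k(\alpha\lambda^{-1})}{(s-t_0+2k)^2}\right),
\]
and by Lemma~\ref{lem:holomorphic_function} the function in \eqref{long_lambda} lies in $\mathcal{H}(t_0+\tfrac32-2\ell_0,\infty)$. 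The standing inequality $M\ge2\ell_0>t_0-s_0+\tfrac32$ forces $s_0>t_0+\tfrac32-2\ell_0$, so \eqref{long_lambda} is holomorphic at $s_0$ and contributes nothing to the residue, and it also forces $1\le j\le\ell_0-1$; hence among the $k$-summands only $k=j$, for which $s-t_0+2k=s-s_0$, has a pole at $s_0$.

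I would first take the residue of the $\Lambda_g$-term. By \eqref{quotient_gamma} the zeros of $\Gamma_{\mathbb{R}}(1-s+t_0)^2/\Gamma_{\mathbb{R}}(1-s+[t_0])^2$ lie in $1+[t_0]+2\mathbb{Z}_{\ge0}$, hence are $\ge1>s_0$, so the coefficient of $\Lambda_g(s,\beta,\cos^{([1+t_0])})$ is holomorphic and non-vanishing at $s_0$, and the residue is that coefficient at $s_0$ times $\Res_{s=s_0}(s-s_0)\Lambda_g(s,\beta,\cos^{([1+t_0])})$ --- the first term on the right of \eqref{messy_1}. The $\Lambda_f$-sine term contributes the left-hand side of \eqref{messy_1}, since multiplying by $\lambda^{2s-2t_0-1}$ only modifies the simple-pole coefficient and leaves the double-pole coefficient as $\lambda^{2s_0-2t_0-1}\Res_{s=s_0}(s-s_0)\Lambda_f(s-t_0,\alpha\lambda^{-1},\sin)$. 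In the $k=j$ summand, multiplying by $(s-s_0)$ makes the $\widetilde{\mathcal{I}}_j$ term holomorphic at $s_0$, so it has zero residue, while the $2\mathcal{I}_j(\alpha\lambda^{-1})$ term survives, weighted by $\bigl[\lambda^{2s-2t_0-1}(\lambda^{-1}\alpha)^{s-t_0-\frac12}\bigr]_{s=s_0}=\lambda^{s_0-t_0-\frac12}\alpha^{s_0-t_0-\frac12}$.

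I would then insert the contour-integral formula \eqref{eq.I} for $\mathcal{I}_j(\alpha\lambda^{-1})$. Since $G_j$ is entire and $\Lambda_f(s)$ is, after Corollary~\ref{cor:sigafter}, holomorphic away from at most double poles at $\{0,1\}$, this equals $(-1)^j\tfrac{\sqrt\pi}{(j!)^2}\bigl(\Res_{s=0}+\Res_{s=1}\bigr)\Lambda_f(s)G_j(s)(\alpha\lambda^{-1})^{\frac12-s}$. For $j\ge1$ the factor $(\tfrac{s}{2})_j$ in \eqref{eq:G} has a simple zero at $s=0$, so $G_j(0)=0$ and the residue at $s=0$ collapses to $(\alpha\lambda^{-1})^{\frac12}G_j'(0)\Res_{s=0}s\Lambda_f(s)$; collecting the powers of $\lambda$ and summing against $c_\lambda$, this term carries the factor $\sum_{\lambda}c_\lambda\lambda^{s_0-t_0-1}=\delta_{t_0}(t_0-s_0+1)$, which vanishes because $s_0\ne1$ --- this is why no residue of $\Lambda_f$ at $0$ appears on the right of \eqref{messy_1}. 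The residue at $s=1$ comes from expanding $\Lambda_f(s)$ to second order about its double pole there, so it brings in $\Res_{s=1}(s-1)\Lambda_f(s)$ and $\Res_{s=1}\Lambda_f(s)$ alongside $G_j(1)$, $G_j'(1)$, and the terms $\log\alpha$ and $\log\lambda$ obtained by differentiating $(\alpha\lambda^{-1})^{\frac12-s}$ in $s$. A second application of \eqref{lambdas} then collapses $\sum_{\lambda}c_\lambda\lambda^{s_0-t_0}$ to $\delta_0(s_0)$, while $\sum_{\lambda}c_\lambda\lambda^{s_0-t_0}\log\lambda$ has no such simplification; assembling the surviving terms gives the right-hand side of \eqref{messy_1}.

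The only genuine obstacle will be the bookkeeping: one must correctly classify the poles at $s_0$ (double for the $\Lambda_g$-term, the $\Lambda_f$-sine twists and the $2\mathcal{I}_j$ term, simple for the $\widetilde{\mathcal{I}}_j$ term, and absent from \eqref{long_lambda}), and likewise the poles at $s=0$ and $s=1$ after inserting \eqref{eq.I}, exploiting that $G_kJ_k$ has only removable singularities and that $G_j(0)=0$ for $j\ge1$; and then carry every constant --- the powers of $i$ and $\pi$, the value of the gamma-quotient at $s_0$, the factor $2$ attached to $\mathcal{I}_j$, and the signs coming from $\log\alpha-\log\lambda$ --- faithfully through the two uses of \eqref{lambdas}.
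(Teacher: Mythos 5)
Your plan follows the paper's own proof essentially step for step: you take the residue at $s=s_0$ of the identity furnished by Lemma~\ref{lem:holomorphic_function} (the paper multiplies \eqref{long_lambda} by $(s-s_0)$, which is the same manipulation as your "two descriptions" of \eqref{merom_1}), isolate the $k=j$ summand, evaluate $\mathcal{I}_j(\alpha\lambda^{-1})$ via \eqref{eq.I} as residues at $s\in\{0,1\}$ using the simple zero of $G_j$ at $s=0$ and the at-most-double pole of $\Lambda_f$ at $s=1$, and then apply \eqref{lambdas} twice exactly as the paper does (killing the $s=0$ contribution via $\sum_\lambda c_\lambda\lambda^{s_0-t_0-1}=0$ and collapsing $\sum_\lambda c_\lambda\lambda^{s_0-t_0}$ to $\delta_0(s_0)$). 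This is correct and essentially the paper's argument; only the routine constant and sign bookkeeping that you yourself flag remains to be carried out.
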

\begin{proof}
Specifying $\epsilon=1$ in equation \eqref{long_lambda}, recalling that $\Lambda(s,\alpha,\sin)=-\Lambda\left(s,\alpha,\cos^{(1)}\right)$, and multiplying by $(s-s_0)$ we get:
\begin{multline} \label{eq.longlambda1}
 		i^{-[1 + t_0]}(N \alpha^2)^{s-\frac{1}{2}}\alpha^{-t_0} \frac{(2 \pi i)^{t_0}  \Gamma_{\mathbb{R}}(1-s+t_0)^2(s-s_0)\Lambda_g \left( s,\beta,\cos^{([1 + t_0])} \right)}{t_0!\Gamma_{\mathbb{R}}\left(1-s+[t_0] \right)^2}  \\
 		- \sum_{\lambda \in T_{\beta,M}}c_\lambda \lambda^{2s-2t_0-1} \Bigg[ i \pi  (s-s_0)  \Lambda_f\left(s-t_0,\alpha \lambda^{-1},\sin\right) \\
 		- (\lambda^{-1}\alpha)^{s-t_0-\frac{1}{2}} (s-s_0) \sum_{k=0}^{\ell_0-1} \left( \frac{\widetilde{\mathcal{I}}_k(\alpha \lambda^{-1})}{s-t_0+2k} + 2\frac{\mathcal{I}_k(\alpha \lambda^{-1})}{(s-t_0+2k)^2} \right) \Bigg].
\end{multline}
Since $s_0 > t_0 - 2\ell_0 + \frac{3}{2}$, Lemma \ref{lem:holomorphic_function} implies that the function in equation \eqref{eq.longlambda1} is holomorphic at $s = s_0$.
Taking its residue, we deduce:
\begin{multline}\label{eq.resressumc}
	 i\pi\Res_{s = s_0}\left[ \sum_{\lambda \in T_{\beta,M}} c_\lambda \lambda^{2s-2t_0-1} (s-s_0) \Lambda_f \left( s-t_0,\alpha \lambda^{-1},\sin \right) \right] \\
	= \Res_{s=s_0} \left[  i^{-[1 + t_0]}(N \alpha^2)^{s-\frac{1}{2}}\alpha^{-t_0} \frac{(2 \pi i)^{t_0}\Gamma_{\mathbb{R}}(1-s_0+t_0)^2(s-s_0)\Lambda_g \left( s,\beta,\cos^{([1 + t_0])} \right)}{t_0!\Gamma_{\mathbb{R}}\left(1-s+ [t_0] \right)^2} \right] \\
	+ 2\sum_{\lambda \in T_{\beta,M}} c_\lambda (\lambda \alpha)^{s_0-t_0-\frac{1}{2}} \mathcal{I}_j(\alpha \lambda^{-1}),
\end{multline}
where, we have used 
\begin{multline}\label{eq.res2laI}
		\Res_{s=s_0} \left[ (\lambda\alpha)^{s-t_0-\frac{1}{2}} (s-s_0)\sum_{k=0}^{\ell_0-1} \left(\frac{\widetilde{\mathcal{I}}_k(\alpha \lambda^{-1})}{s-t_0+2k}+2\frac{\mathcal{I}_k(\alpha \lambda^{-1})}{(s-t_0+2k)^2} \right) \right] \\
		= 2(\lambda\alpha)^{s_0-t_0-\frac{1}{2}}  \mathcal{I}_j(\alpha \lambda^{-1}),
\end{multline}
which holds because $j\in\mathbb{Z}$ satisfies $0\leq j<\ell_0$, and appears because $\lambda^{2s-2t_0-1}\left(\lambda^{-1}\alpha\right)^{s-t_0-\frac12}=(\lambda\alpha)^{s-t_0-\frac12}$.
As per the discussion following Corollary~\ref{cor:sigafter}, we know that $\Lambda_f(s)$ is holomorphic away from integer points in the range $-1<-\sigma<s<1+\sigma<2$. It follows that:
\begin{multline}\label{residue_eval_1}
\mathcal{I}_j(\alpha \lambda^{-1}) = (-1)^j \frac{\sqrt{\pi}}{(j!)^2} \frac{1}{2\pi i} \oint \Lambda_f(s) G_j(s) (\alpha \lambda^{-1})^{\frac{1}{2}-s} ds \\
= (-1)^j \frac{\sqrt{\pi}}{(j!)^2} \sum_{p \in \{0,1\} } \Res_{s = p} \left[ \Lambda_f(s) G_j(s) (\alpha \lambda^{-1})^{\frac{1}{2}-s}\right], 
\end{multline}
where $G_j(s)$ is as in equation \eqref{eq:G} with $\epsilon=1$. 
Since $G_j(s)(\alpha\lambda^{-1})^{\frac12-s}$ is entire and $\Lambda_f(s)$ has at most double poles at $s \in \{0,1\}$, it follows that $\Lambda_f(s)G_j(s)(\alpha\lambda^{-1})^{\frac12-s}$ has at most a double pole at $s\in\{0,1\}$. In fact, $G_j(s)(\alpha\lambda^{-1})^{\frac12-s}$ has a simple zero at $s = 0$ and so the product $\Lambda_f(s)G_j(s)$ has at most a simple pole at $s=0$. 
For $s$ close to $0$, we note the expansion:
\begin{equation}\label{eq.Gexpansion}
	G_j(s) = \frac{\left(1/2\right)_j(j-1)!}{2\sqrt{\pi}}s+O\left(s^2\right),
\end{equation}
in which the implied constant depends on $j$. Substituting equation \eqref{eq.Gexpansion} into equation \eqref{residue_eval_1}, we deduce:
\begin{multline}\label{I_j_res}
	\mathcal{I}_j(\alpha \lambda^{-1}) = \frac{(-1)^j} {2(j!)^2} \left[ (\alpha \lambda^{-1})^\frac{1}{2} \left(1/2\right)_j(j-1)!\Res_{s=0}s\Lambda_f(s)\right.\\
	 +\sqrt{\pi}~\left(G_j'(1) (\alpha \lambda^{-1})^{-\frac{1}{2}} +G_j(1) \log(\alpha \lambda^{-1}) (\alpha \lambda^{-1})^{-\frac{1}{2}} \right) \Res_{s=1} (s-1)\Lambda_f(s) \\
	 \left. +\sqrt{\pi}~ G_j(1) (\alpha \lambda^{-1})^{-\frac{1}{2}} \Res_{s=1}\Lambda_f(s) \right].
\end{multline}
The functions $(s-s_0)\Lambda_f \left(s-t_0,\alpha \lambda^{-1},\sin \right)$ and $(s-s_0)\Lambda_g \left(s,\beta,\cos^{([1+t_0])} \right)$  have at most simple poles at $s = s_0$. Therefore, substituting equation~\eqref{I_j_res} into equation \eqref{eq.resressumc}, we have:
\begin{multline}\label{almost_res_1}
	i \pi  \sum_{\lambda \in T_{\beta,M}}c_\lambda \lambda^{2s_0-2t_0-1}  \Res_{s=s_0} (s-s_0) \Lambda_f \left(s-t_0, \alpha \lambda^{-1},\sin \right)  \\
	=  i^{-[1+t_0]} (N\alpha^2)^{s_0-\frac{1}{2}} \alpha^{-t_0} \frac{(2 \pi i)^{t_0}\Gamma_{\mathbb{R}}(1-s+t_0)^2}{t_0!\Gamma_{\mathbb{R}}\left(1-s+ [t_0] \right)^2}\Res_{s=s_0} (s-s_0) \Lambda_g \left(s,\beta,\cos^{([1+t_0])} \right) \\
	+ (-1)^j\frac{\left(1/2\right)_j}{j(j!)} \alpha^{s_0-t_0} \sum_{\lambda \in T_{\beta,M}} c_\lambda \lambda^{s_0-t_0-1} \Res_{s=0}s\Lambda_f(s) \\
	+ (-1)^j \frac{\sqrt{\pi}}{(j!)^2}\alpha^{s_0-t_0-1}\sum_{\lambda \in T_{\beta,M}} c_\lambda \lambda^{s_0-t_0}\big[ G_j'(1) \Res_{s=1}(s-1)\Lambda_f(s)\\ 
+ G_j(1) \log(\alpha) \Res_{s=1}(s-1)\Lambda_f(s)-G_j(1) \log(\lambda) \Res_{s=1}(s-1)\Lambda_f(s)	+ G_j(1) \Res_{s=1}\Lambda_f(s)\big].
\end{multline}
Equation \eqref{messy_1} follows from equation~\eqref{almost_res_1} using equation~\eqref{lambdas}, upon noting that $\delta_{t_0}(t_0-s_0)=\delta_0(s_0)$ and that the second term on the right hand side  of \eqref{almost_res_1} vanishes since $-t_0 > s_0-t_0-1 > -2\ell_0$. 
\end{proof}
\begin{lem}\label{lem:arbitrary_value} 
Given $t_0 \in \mathbb{Z}_{\geq0}$ and $\beta\in\mathbb{Q}^{\times}$, consider $\ell_0\in\mathbb{Z}_{>0}$ so that $2\ell_0>t_0$ and $T_{\beta,M} \subset T_\beta$ of cardinality $M\geq2\ell_0 > t_0$. There exists $\lambda_0 \in T_{\beta}$ such that the vectors $\left( \lambda^{-t} \right)_{\lambda \in T_{\beta,M} \cup \{ \lambda_0 \}}$ and $\left( \lambda^{-t_0}\log(\lambda) \right)_{\lambda \in T_{\beta,M} \cup \{ \lambda_0 \}}$ are linearly independent, for $t \in \{ 0, 1, \dots, M-1 \}$. 
\end{lem}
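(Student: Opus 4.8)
The plan is to reduce the claim to the non‑vanishing of a single $(M+1)\times(M+1)$ determinant, and then to show that this determinant, regarded as a function of $\lambda_0$, is nonzero for all sufficiently large $\lambda_0$, so that the unbounded set $T_\beta$ supplies an admissible choice.

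\emph{Step 1: reformulation as a determinant.} Enumerate $T_{\beta,M}=\{\mu_1,\dots,\mu_M\}$, and for a real parameter $x>0$ let $A(x)$ be the $(M+1)\times(M+1)$ matrix with columns indexed by $\mu_1,\dots,\mu_M,x$, whose rows for $t\in\{0,1,\dots,M-1\}$ are $\bigl(\mu_i^{-t}\bigr)_i$ together with $x^{-t}$, and whose final row is $\bigl(\mu_i^{-t_0}\log\mu_i\bigr)_i$ together with $x^{-t_0}\log x$. For any $\lambda_0\in T_\beta\setminus T_{\beta,M}$ the set $T_{\beta,M}\cup\{\lambda_0\}$ has $M+1$ elements, and the $M+1$ vectors in the statement are exactly the rows of $A(\lambda_0)$; hence they are linearly independent if and only if $\det A(\lambda_0)\neq 0$. (If $\lambda_0\in T_{\beta,M}$ then $A(\lambda_0)$ has a repeated column and the determinant vanishes, so any $\lambda_0$ achieving non‑vanishing automatically lies outside $T_{\beta,M}$, and we need not arrange this separately.)

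\emph{Step 2: structure of the determinant.} Expanding $\det A(x)$ along its last column gives $\det A(x)=\sum_{j=0}^{M-1}\gamma_j x^{-j}+\gamma_M\,x^{-t_0}\log x$, where the $\gamma_j$ are constants (signed minors not involving $x$) and $\gamma_M$ is the minor obtained by deleting the last row and column, namely the Vandermonde determinant $\det\bigl(\mu_j^{-(i-1)}\bigr)_{1\le i,j\le M}=\pm\prod_{1\le i<j\le M}\bigl(\mu_j^{-1}-\mu_i^{-1}\bigr)$. Since the elements of $T_{\beta,M}$ are distinct, $\gamma_M\neq 0$, so $\det A(x)$ is a nonzero $\mathbb{C}$‑linear combination of $1,x^{-1},\dots,x^{-(M-1)},x^{-t_0}\log x$.

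\emph{Step 3: eventual non‑vanishing along $T_\beta$.} Put $h(x):=x^{M-1}\det A(x)=P(x)+\gamma_M\,x^{m}\log x$, where $P\in\mathbb{C}[x]$ has $\deg P\le M-1$ and $m:=M-1-t_0\ge 0$ (using $t_0\le M-1$, which follows from $M\ge 2\ell_0>t_0$). A comparison of growth rates shows $|h(x)|\to\infty$ as $x\to+\infty$: if $\deg P>m$ the leading monomial of $P$ dominates, and otherwise the term $\gamma_M x^{m}\log x$ does. Hence $h$, and with it $\det A(x)=x^{-(M-1)}h(x)$, is nonzero for all $x$ beyond some $X>0$. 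Since $T_\beta$ is unbounded we may choose $\lambda_0\in T_\beta$ with $\lambda_0>X$; then $\det A(\lambda_0)\neq 0$, and by Step 1 the required linear independence holds. The step I expect to require the most care is precisely this last one: linear independence must be secured for a value $\lambda_0$ lying in the prescribed discrete set $T_\beta$ rather than for a generic real parameter, so the mere fact that the nonzero real‑analytic function $\det A(x)$ has isolated zeros does not suffice; one genuinely uses the \emph{eventual} non‑vanishing of $\det A(x)$ together with the unboundedness of $T_\beta$.
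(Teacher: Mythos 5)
Your proposal is correct and follows essentially the same route as the paper: form the $(M+1)\times(M+1)$ determinant, expand along the row/column containing $\lambda_0$ to isolate the term $\lambda_0^{-t_0}\log(\lambda_0)$ whose coefficient is a nonzero Vandermonde determinant, and exploit the unboundedness of $T_\beta$ via a growth comparison as $\lambda_0\to\infty$. The only (cosmetic) difference is that you show the determinant is eventually nonzero directly, whereas the paper assumes vanishing for all $\lambda_0\in T_\beta\setminus\{1\}$ and derives a contradiction from the same asymptotics.
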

\begin{proof}
Consider any $\lambda_0\in T_{\beta}\backslash\{1\}$. For $t \in \{ 0, 1, \dots, M-1 \}$, consider the $(M+1) \times (M+1)$-matrix with columns given by the vectors $\left( \lambda^{-t} \right)_{\lambda \in T_{\beta,M} \cup \{ \lambda_0 \}}$ and $\left( \lambda^{-t_0}\log(\lambda) \right)_{\lambda \in T_{\beta,M} \cup \{ \lambda_0 \}}$. Expanding along the $\lambda_0-$row we see that the determinant of this matrix has the form:
	\begin{equation}\label{eq.determinant}
		\lambda_0^{-t_0}\log(\lambda_0)c + P(\lambda_0^{-1}),
	\end{equation}
where $c$ is a non-zero constant\footnote{More precisely, $c$ is the determinant of the Vandermonde matrix with columns $\left( \lambda^{-t} \right)_{\lambda \in T_{\beta,M}}$ for $t = 0, \hdots, M-1$.} and $P(x)\in \mathbb{C}[x]$. 
Suppose for a contradiction that the expression in equation~\eqref{eq.determinant} vanishes for all $\lambda_0 \in T_{\beta}\backslash\{1\}$, that is:
\begin{equation} \label{eq.c_in_terms_of_lambda_0}
	-c = \frac{\lambda_0^{t_0} P(\lambda_0^{-1})}{\log(\lambda_0)}, \ \ \lambda_0 \in T_\beta \setminus \{ 1 \}.
\end{equation} 
Since the set $T_{\beta}$ is unbounded, it follows that we may choose $\lambda_0$ to be arbitrarily large. The right hand side of \eqref{eq.c_in_terms_of_lambda_0} will always either tend to $0$ or $ \pm \infty$ as $\lambda_0 \rightarrow \infty$, depending on $P$ and $t_0$. This is a contradiction since $c \neq 0$.
\end{proof}
In particular, for all $z \in \mathbb{C}$, there exists $\lambda_0 \in T_\beta$, $c_{\lambda_0}\in\mathbb{C}$ and $c_{\lambda} \in \mathbb{C}$ associated to each $\lambda\in T_{\beta,M}$ such that 
	
	\begin{equation}\label{eq.modifiedlambda}
				\sum_{\lambda \in T_{\beta,M} \cup \{ \lambda_0 \}} c_\lambda \lambda^{-t_0} \log(\lambda) = z, \ \		\sum_{\lambda \in T_{\beta,M} \cup \{ \lambda_0 \}} c_\lambda \lambda^{-t} = \delta_{t_0}(t), \; \; t \in \{ 0,1, \dots, M-1 \}.
	\end{equation}
\begin{lem}\label{lem:L_functions_at_most_simple_poles}
Make the assumptions of Theorem \ref{thm:0Converse}. If $\epsilon =1$, then $\Lambda_f(s)$ and $\Lambda_g(s)$ have at most simple poles at $s \in \{ 0,1\}$.
\end{lem}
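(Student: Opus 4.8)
\subsection*{Proof plan for Lemma~\ref{lem:L_functions_at_most_simple_poles}}

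The plan is to combine equation~\eqref{messy_1} with the flexibility afforded by Lemma~\ref{lem:arbitrary_value}. The point of~\eqref{messy_1} is that, apart from the term $-G_j(1)\bigl(\sum_\lambda c_\lambda\lambda^{s_0-t_0}\log\lambda\bigr)\Res_{s=1}(s-1)\Lambda_f(s)$, its right-hand side involves no sum over $\lambda$, so if the left-hand side can be made to vanish then letting $\sum_\lambda c_\lambda\lambda^{s_0-t_0}\log\lambda$ range over $\mathbb C$ will force $G_j(1)\Res_{s=1}(s-1)\Lambda_f(s)=0$; since $G_j(1)=j!\,(1/2)_j/\sqrt\pi\neq0$ by~\eqref{eq:G} with $\epsilon=1$, this yields $\Res_{s=1}(s-1)\Lambda_f(s)=0$. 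To arrange everything, fix a prime $q_1\in\mathcal P$ and set $\alpha=1/q_1$ and $\beta=-1/(N\alpha)=-q_1/N$. Corollary~\ref{cor:sigafter} gives $\Lambda_f(s,\alpha,\cos^{(r)})\in\mathcal M(-\infty,\infty)$ and Proposition~\ref{prop:twisted_mero_1} (with $-q_1$ for $b$ and $1$ for $q$) gives $\Lambda_g(s,\beta,\cos^{(r)})\in\mathcal M(-\infty,\infty)$, so Lemma~\ref{lem:res_1} applies to this pair. Crucially, for every $\lambda\in T_\beta$ one has $\alpha\lambda^{-1}=1/p$ with $p\in\mathcal P$, a denominator coprime to $N$.

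Next I would invoke Lemma~\ref{lem:res_1} with $s_0=0$, with $t_0$ a sufficiently large even integer (so that $[t_0]=[s_0]=0$, $j=t_0/2$, and $t_0$ exceeds the exponent $\sigma$), and with $\ell_0$ as there. The left-hand side of~\eqref{messy_1} then vanishes. Indeed, since $\alpha\lambda^{-1}=1/p$ has conductor $p\in\mathcal P$, Proposition~\ref{prop:functional_equation_twists} applies with $r=1$; the corresponding functional equation~\eqref{functional_equations_additive_twists} has no polynomial correction term, so $\Lambda_f(s-t_0,\alpha\lambda^{-1},\sin)=-(p^2N)^{\frac12-s+t_0}$ times a fixed finite $\mathbb C$-linear combination of the functions $\Lambda_g(1-s+t_0,\bar\psi)$, $\psi$ ranging over characters modulo $p$. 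Each $\Lambda_g(1-s+t_0,\bar\psi)$ is holomorphic for $\mathrm{Re}(1-s+t_0)$ large, hence near $s=0$, and the factor $(p^2N)^{\frac12-s+t_0}$ is entire; so $\Res_{s=0}s\,\Lambda_f(s-t_0,\alpha\lambda^{-1},\sin)=0$ for every $\lambda\in T_\beta$, and the left-hand side of~\eqref{messy_1} is $0$.

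With the left-hand side gone, equation~\eqref{messy_1} becomes
\[
0=\mathcal T_g+(-1)^j\frac{\sqrt\pi}{(j!)^2}\alpha^{-t_0-1}\Bigl[\bigl(G_j'(1)+G_j(1)\log\alpha\bigr)R+G_j(1)\widetilde R-G_j(1)\Bigl(\textstyle\sum_\lambda c_\lambda\lambda^{-t_0}\log\lambda\Bigr)R\Bigr],
\]
where $R=\Res_{s=1}(s-1)\Lambda_f(s)$, $\widetilde R=\Res_{s=1}\Lambda_f(s)$, and $\mathcal T_g$ is the $\Lambda_g$-residue term from~\eqref{messy_1}; note that $\mathcal T_g$ and the other bracketed terms carry no sum over $\lambda$. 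Equation~\eqref{messy_1} remains valid if $T_{\beta,M}$ is replaced by the enlarged set $T_{\beta,M}\cup\{\lambda_0\}$ of Lemma~\ref{lem:arbitrary_value}, because its derivation only uses~\eqref{lambdas} at indices at most $2\ell_0-1\leq M-1$, and~\eqref{eq.modifiedlambda} supplies exactly these. By~\eqref{eq.modifiedlambda} we may therefore prescribe $\sum_\lambda c_\lambda\lambda^{-t_0}\log\lambda$ to be any $z\in\mathbb C$ while $\mathcal T_g$, $R$, and $\widetilde R$ stay fixed; since the displayed identity then holds for all $z$, the coefficient of $z$ vanishes, i.e.\ $G_j(1)R=0$, hence $R=\Res_{s=1}(s-1)\Lambda_f(s)=0$: the pole of $\Lambda_f$ at $s=1$ is at most simple.

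Finally, the hypotheses of Theorem~\ref{thm:0Converse}, including the functional equation~\eqref{eq:FE}, are symmetric under interchanging $(f,\chi)$ and $(g,\overline\chi)$, so the same argument run with the roles of $f$ and $g$ swapped gives $\Res_{s=1}(s-1)\Lambda_g(s)=0$. Now~\eqref{eq:FE} at $q=1$ reads $\Lambda_f(s)=(-1)^\epsilon N^{\frac12-s}\Lambda_g(1-s)$, which identifies the coefficient of $s^{-2}$ in the Laurent expansion of $\Lambda_f$ at $s=0$ with $(-1)^\epsilon N^{1/2}\Res_{s=1}(s-1)\Lambda_g(s)=0$, i.e.\ $\Res_{s=0}s\,\Lambda_f(s)=0$; symmetrically $\Res_{s=0}s\,\Lambda_g(s)=0$. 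Hence $\Lambda_f(s)$ and $\Lambda_g(s)$ have at most simple poles at $s\in\{0,1\}$. The hard part will be the vanishing of the left-hand side of~\eqref{messy_1}: it hinges on choosing $\alpha,\beta$ so that the twisting parameters $\alpha\lambda^{-1}$ have denominator coprime to $N$---so that the \emph{odd}-twist functional equation~\eqref{functional_equations_additive_twists}, which carries no polynomial term, is available---together with the observation that the right-hand side of that equation is holomorphic far to the left of the critical strip.
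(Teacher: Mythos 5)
Your proposal is correct and follows essentially the same route as the paper: apply equation~\eqref{messy_1} with $s_0=0$ and $t_0$ even, kill the left-hand side by noting that the odd additive twists $\Lambda_f(s-t_0,\alpha\lambda^{-1},\sin)$ have denominator in $\mathcal{P}$ and hence, via the clean case of equation~\eqref{functional_equations_additive_twists}, are holomorphic at $s=0$, then use Lemma~\ref{lem:arbitrary_value} to make $\sum_\lambda c_\lambda\lambda^{-t_0}\log\lambda$ arbitrary and force $\Res_{s=1}(s-1)\Lambda_f(s)=0$ since $G_{t_0/2}(1)\neq0$, finishing with the $f\leftrightarrow g$ symmetry and the $q=1$ functional equation to handle $s=0$. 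The only (immaterial) deviation is your choice $\alpha=1/q_1$, $\beta=-q_1/N$ in place of the paper's $\alpha=q$, $\beta=-1/(Nq)$ with $q\in\mathcal{P}\cup\{1\}$; both make $\alpha\lambda^{-1}$ a fraction with denominator a prime of $\mathcal{P}$, which is all the argument needs.
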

\begin{proof}
By assumption, $\Lambda_f(s)$ and $\Lambda_g(s)$ are holomorphic for $\Re(s)\geq2>1+\sigma$ and $\Re(s)<-\sigma<0$. Using the functional equation~\eqref{eq:FE}, we see it suffices to show that:
\begin{equation}\label{eq.suffices}
	\Res_{s=1}(s-1)\Lambda_f(s) = \Res_{s=1}(s-1)\Lambda_g(s) = 0.
\end{equation}
We will show that $\Res_{s=1}(s-1)\Lambda_f(s) = 0$. Reversing the roles of $f$ and $g$ will yield that $\Res_{s=1}(s-1)\Lambda_g(s) = 0$.

For $q\in\mathcal{P}\cup\{1\}$, consider $\beta = - \frac{1}{Nq}$, so that $\alpha=q$. Given a subset $ T_{\beta,M}\subset T_{\beta}$ containing $M$ elements, choose $c_{\lambda}\in\mathbb{C}$ satisfying equation~\eqref{lambdas}. Take $\lambda = p \in T_{\beta,M}$, so that $\alpha \lambda^{-1} = \frac{q}{p}$.  
For $t_0\in\mathbb{Z}_{>1}$, our assumptions on $\{a_n\}_{n=1}^{\infty}$ and $\sigma$ imply, using equation~\eqref{functional_equations_additive_twists}, that $\Lambda_f(s-t_0,\alpha \lambda^{-1},\sin)$ is holomorphic at $s = 0$. 
 For all $t_0\in\mathbb{Z}_{>1}$ such that $[t_0] = 0$, substituting $s_0 = 0$ into equation \eqref{messy_1} gives:
\begin{multline}\label{no_double_poles}
	iN^{-1/2} \frac{\left(2 \pi i\right)^{t_0}\Gamma_{\mathbb{R}}(1+t_0)^2}{t_0!\Gamma_{\mathbb{R}}\left(1\right)^2} \Res_{s=0} s \Lambda_g \left(s,\beta,\sin\right) \\
	= \left(-1\right)^\frac{t_0}{2} \frac{\sqrt{\pi}}{\left(\frac{t_0}{2}!\right)^2} \Big[  G_{t_0/2}'(1) \Res_{s=1} (s-1)\Lambda_f\left(s\right)  +  G_{t_0/2}(1) \log\left(\alpha\right) \Res_{s=1}  (s-1)\Lambda_f\left(s\right)  \\
	-G_{t_0/2}(1) \sum_{\lambda \in T_{\beta,M}}c_\lambda \lambda^{-t_0}\log\left(\lambda\right) \Res_{s=1}(s-1)\Lambda_f\left(s\right)  + G_{t_0/2}(1) \Res_{s=1} \Lambda_f\left(s\right)   \Big].
	\end{multline}
By Lemma \ref{lem:arbitrary_value}, for any $z \in \C$ we can find $\lambda_0 \in T_\beta$ and $c_\lambda, c_{\lambda_0} \in \C$ such that equation \eqref{eq.modifiedlambda} is satisfied. The proof of Lemma \ref{lem:res_1} remains valid if we replace $T_{\beta,M}$ by $T_{\beta,M} \cup \{ \lambda_0 \}$. In particular, equation \eqref{no_double_poles} holds with $\sum_{\lambda \in T_{\beta,M}}c_\lambda \lambda^{-t_0}\log(\lambda)$ replaced by an arbitrary complex number. 
Since $G_{t_0/2}(1) \neq 0$, we conclude that $\Res_{s=1}(s-1)\Lambda_f(s) = 0$. 
\end{proof}

\begin{lem}\label{lem:residue_2}
Make the assumptions of Lemma \ref{lem:res_1}, so that, in particular, we have $\epsilon=1$ and $[t_0]=[s_0]$. We have:
\begin{multline}\label{residues_simple}
	i \pi \Res_{s=s_0} \left[ \sum_{\lambda \in T_{\beta,M}}c_\lambda \lambda^{2s-2t_0-1} \Lambda_f \left(s-t_0, \alpha \lambda^{-1},\sin \right) \right] \\
	= \Res_{s=s_0} \left[ i^{-[1+t_0]} (N\alpha^2)^{s-1/2} \alpha^{-t_0} \frac{(2 \pi i)^{t_0}\Gamma_{\mathbb{R}}(1-s+t_0)^2}{t_0!\Gamma_{\mathbb{R}}\left(1-s+[t_0] \right)^2}\Lambda_g \left(s,\beta,\cos^{([1+t_0])} \right) \right]	\\
	+ (-1)^j\frac{\sqrt{\pi}}{(j!)^2} G_j(1) \alpha^{s_0-t_0-1} \sum_{\lambda \in T_{\beta,M}}c_\lambda\lambda^{-t_0}\log(\lambda) \Res_{s=1}\Lambda_f(s) 	\\
	+ (-1)^j\frac{\sqrt{\pi}}{(j!)^2} G_j(1) \alpha^{s_0-t_0-1} \log(\alpha) \delta_0(s_0)\Res_{s=1}\Lambda_f(s) \\
	+ (-1)^j \frac{\sqrt{\pi}}{(j!)^2} J_j(1)G_j(1) \alpha^{s_0-t_0-1} \delta_0(s_0) \Res_{s=1}\Lambda_f(s).	
\end{multline}
\end{lem}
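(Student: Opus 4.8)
The plan is to rerun the argument of Lemma~\ref{lem:res_1} one order deeper, reading off the genuine residue at $s=s_0$ rather than the leading Laurent coefficient, and exploiting the extra input of Lemma~\ref{lem:L_functions_at_most_simple_poles}, namely that the poles of $\Lambda_f$ at $0$ and $1$ are now known to be simple. First I would specialise equation~\eqref{long_lambda} to $\epsilon=1$ and rewrite $(-i\pi)^{\epsilon}\Lambda_f(s-t_0,\alpha\lambda^{-1},\cos^{(\epsilon)})=i\pi\Lambda_f(s-t_0,\alpha\lambda^{-1},\sin)$ using $\Lambda(s,\alpha,\sin)=-\Lambda(s,\alpha,\cos^{(1)})$. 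By Lemma~\ref{lem:holomorphic_function} the resulting function lies in $\mathcal{H}\!\left(t_0+\tfrac{3}{2}-2\ell_0,\infty\right)$, and since $s_0>t_0-2\ell_0+\tfrac{3}{2}$ it is holomorphic at $s=s_0$, so the residues at $s=s_0$ of its three pieces — the $\Lambda_g$-term, the $\Lambda_f$-additive-twist term, and the $\mathcal{I}_k$/$\widetilde{\mathcal{I}}_k$-correction term — sum to zero. Isolating the additive-twist piece shows that $i\pi\,\Res_{s=s_0}\big[\sum_{\lambda\in T_{\beta,M}}c_\lambda\lambda^{2s-2t_0-1}\Lambda_f(s-t_0,\alpha\lambda^{-1},\sin)\big]$ equals the $\Lambda_g$-term already displayed on the right of equation~\eqref{residues_simple} plus $\Res_{s=s_0}$ of the correction term, reducing the problem to evaluating the latter residue.

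For that, only the $k=j$ summand of the correction term $\sum_{\lambda}c_\lambda(\lambda\alpha)^{s-t_0-\frac12}\sum_{k=0}^{\ell_0-1}\big(\widetilde{\mathcal{I}}_k(\alpha\lambda^{-1})(s-t_0+2k)^{-1}+2\mathcal{I}_k(\alpha\lambda^{-1})(s-t_0+2k)^{-2}\big)$ has a pole at $s=s_0$, and $0<j<\ell_0$ by the size hypothesis on $M$. Taylor-expanding the prefactor $(\lambda\alpha)^{s-t_0-\frac12}$ about $s_0$ then gives this residue in the form $\sum_{\lambda}c_\lambda(\lambda\alpha)^{s_0-t_0-\frac12}\big(\widetilde{\mathcal{I}}_j(\alpha\lambda^{-1})+2\log(\lambda\alpha)\,\mathcal{I}_j(\alpha\lambda^{-1})\big)$; note that, unlike in Lemma~\ref{lem:res_1}, the $\widetilde{\mathcal{I}}_j$ contribution and the logarithm survive because we do not multiply through by $(s-s_0)$ first.

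It remains to evaluate $\mathcal{I}_j(\alpha\lambda^{-1})$ and $\widetilde{\mathcal{I}}_j(\alpha\lambda^{-1})$ from the contour integrals~\eqref{eq.I} and~\eqref{eq.Itilde}, whose contours enclose the poles of $\Lambda_f$, now known to be at worst simple poles at $0$ and $1$. Since $G_j$ has a simple zero at $s=0$ by equation~\eqref{eq.Gexpansion}, the product $\Lambda_f G_j$ is holomorphic at $s=0$, so $\mathcal{I}_j(\alpha\lambda^{-1})$ reduces to its residue at $s=1$ and is a multiple of $G_j(1)(\alpha\lambda^{-1})^{-\frac12}\Res_{s=1}\Lambda_f(s)$. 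Since $J_jG_j$ is entire (it has only removable singularities), $\Lambda_f J_j G_j$ has at worst simple poles at $0$ and $1$, so $\widetilde{\mathcal{I}}_j(\alpha\lambda^{-1})$ acquires a multiple of $J_j(1)G_j(1)(\alpha\lambda^{-1})^{-\frac12}\Res_{s=1}\Lambda_f(s)$ from $s=1$ together with a multiple of $(J_jG_j)(0)(\alpha\lambda^{-1})^{\frac12}\Res_{s=0}\Lambda_f(s)$ from $s=0$.

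Finally I would substitute these back, collect powers of $\lambda$, and apply the Vandermonde relations~\eqref{lambdas}. The $\lambda$-independent parts carry $\sum_{\lambda}c_\lambda\lambda^{s_0-t_0}=\delta_{t_0}(t_0-s_0)=\delta_0(s_0)$, producing the $\delta_0(s_0)$ terms of equation~\eqref{residues_simple}, while $\log(\lambda\alpha)=\log\lambda+\log\alpha$ splits off the $\log\alpha\,\delta_0(s_0)$ term and leaves the logarithmic sum $\sum_{\lambda}c_\lambda\lambda^{-t_0}\log\lambda$; crucially, the $s=0$ contribution to $\widetilde{\mathcal{I}}_j$ enters weighted by $\sum_{\lambda}c_\lambda\lambda^{s_0-t_0-1}=\delta_{t_0}(t_0-s_0+1)$, which vanishes because $s_0<1$ forces $t_0-s_0+1\neq t_0$, so no $\Res_{s=0}\Lambda_f(s)$ term survives and one recovers equation~\eqref{residues_simple}. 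I expect the main obstacle to be this last step: keeping the two sources of logarithm straight — the expansion of $(\lambda\alpha)^{s-t_0-\frac12}$ at $s_0$ and that of $(\alpha\lambda^{-1})^{\frac12-s}$ at $s=1$ — pinning down all the constants, and getting the $\lambda$-exponents right so that the Vandermonde relations eliminate the unwanted residue of $\Lambda_f$ at $s=0$.
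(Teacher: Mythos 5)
Your proposal is correct and follows essentially the same route as the paper's proof: take the residue at $s=s_0$ of the holomorphic function in equation~\eqref{long_lambda} with $\epsilon=1$, evaluate the surviving $k=j$ correction terms via $\mathcal{I}_j$ and $\widetilde{\mathcal{I}}_j$ using the simple-pole information from Lemma~\ref{lem:L_functions_at_most_simple_poles} (with $G_j$ vanishing at $s=0$ killing the $\Res_{s=0}\Lambda_f$ contribution to $\mathcal{I}_j$), and then apply the relations~\eqref{lambdas} so that the $\Res_{s=0}\Lambda_f(s)$ term is weighted by $\delta_{t_0}(t_0-s_0+1)=0$ and the remaining sums produce the $\delta_0(s_0)$ and $\log(\lambda)$ terms. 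This matches the paper's equations~\eqref{eq.res412}--\eqref{eq.above} step for step, so no further comment is needed.
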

\begin{proof}
For $j$ as in Lemma~\ref{lem:res_1}, we compute:
\begin{equation}\label{eq.res412}
	\Res_{s=s_0} \left[ (\lambda \alpha)^{s-t_0-\frac{1}{2}} \sum_{k=0}^{\ell_0-1}\frac{\mathcal{I}_k(\alpha \lambda^{-1})}{(s-t_0+2k)^2}\right]= \log(\lambda \alpha) (\lambda \alpha)^{s_0-t_0-\frac{1}{2}} \mathcal{I}_j(\alpha \lambda^{-1}).
\end{equation}
By Lemma \ref{lem:L_functions_at_most_simple_poles}, the function $\Lambda_f(s)$ has at most simple poles. Simplifying equation~\eqref{I_j_res} accordingly, we get:
\begin{equation}\label{I_j_res_simplified}
	\mathcal{I}_j(\alpha \lambda^{-1}) =(-1)^j(\alpha \lambda^{-1})^{-\frac{1}{2}}  \frac{\sqrt{\pi}~ G_j(1) } {2(j!)^2}\Res_{s=1}\Lambda_f(s).
\end{equation}
Substituting equation \eqref{I_j_res_simplified} into equation~\eqref{eq.res412}, we obtain: 
\begin{multline}\label{eq.TML}
\Res_{s=s_0} \left[ (\lambda \alpha)^{s-t_0-\frac{1}{2}} \sum_{k=0}^{\ell_0-1}\frac{\mathcal{I}_k(\alpha \lambda^{-1})}{(s-t_0+2k)^2}\right] \\
= (-1)^j  \log(\lambda \alpha) \lambda^{s_0-t_0} \alpha^{s_0-t_0-1} \frac{\sqrt{\pi}G_j(1)}{2(j!)^2} \Res_{s=1} \Lambda_f(s).
\end{multline}
On the other hand, we observe:
\begin{equation}\label{eq.TMLL}
	\Res_{s=s_0}\left[ (\lambda \alpha)^{s-t_0-1} \sum_{k=0}^{\ell_0-1} \frac{\widetilde{\mathcal{I}}_k(\alpha \lambda^{-1})}{s-t_0+2k} \right] = (\lambda \alpha)^{s_0-t_0-1} \widetilde{\mathcal{I}}_j(\alpha \lambda^{-1}).
\end{equation}
Mimicking the computation of $\mathcal{I}_j(\alpha\lambda^{-1})$ presented in the proof of Lemma \ref{lem:res_1}, we deduce:
\begin{equation}\label{eq.ItildeRes}
		\widetilde{\mathcal{I}}_j(\alpha \lambda^{-1}) = (-1)^j \frac{\sqrt{\pi}}{(j!)^2} \sum_{p \in \{0,1 \} } \Res_{s = p} \left[ \Lambda_f(s) J_j(s)G_j(s) (\alpha \lambda^{-1})^{\frac{1}{2}-s}\right],
\end{equation}
where $J_j(s)$ is as in equation \eqref{eq:K}, both with $\epsilon=1$. Since $\Lambda_f(s)$ has at most simple poles, equation~\eqref{eq.ItildeRes} implies\footnote{Note that $J_j(s)G_j(s)$ has a removable singularity at $s=0$, and $\lim_{s\rightarrow 0} J_j(s)G_j(s) \neq 0$. Abusing notation, we write $J_j(0)G_j(0) = \lim_{s\rightarrow 0} J_j(s)G_j(s)$.}:
\begin{multline}\label{equationabove}
	\widetilde{\mathcal{I}}_j(\alpha \lambda^{-1}) = (-1)^j \frac{\sqrt{\pi}}{(j!)^2} \left( \alpha^{\frac{1}{2}} \lambda^{-\frac{1}{2}} J_j(0)G_j(0) \Res_{s=0}\Lambda_f(s)\right.\\ 
	\left.+  \alpha^{-\frac{1}{2}} \lambda^{\frac{1}{2}} J_j(1)G_j(1) \Res_{s=1}\Lambda_f(s)  \right).
\end{multline}
Substituting equation~\eqref{equationabove} into equation~\eqref{eq.TMLL}, we get:
\begin{multline}\label{eq.TMLL2}
	\Res_{s=s_0}\left[ (\lambda \alpha)^{s-t_0-1} \sum_{k=0}^{\ell_0-1} \frac{\widetilde{\mathcal{I}}_k(\alpha \lambda^{-1})}{s-t_0+2k} \right] \\
= (-1)^j(\lambda \alpha)^{s_0-t_0-1} \frac{\sqrt{\pi}}{(j!)^2}\left( \alpha^{\frac{1}{2}} \lambda^{-\frac{1}{2}} J_j(0)G_j(0) \Res_{s=0}\Lambda_f(s)+  \alpha^{-\frac{1}{2}} \lambda^{\frac{1}{2}} J_j(1)G_j(1) \Res_{s=1}\Lambda_f(s)  \right).
\end{multline}
The function in equation~\eqref{long_lambda} is holomorphic at $s_0$. Taking the residue of this function at $s=s_0$ and applying equations~\eqref{eq.TML} and \eqref{eq.TMLL2}, we obtain:
\begin{multline}\label{eq.above}
		i \pi \Res_{s=s_0} \left[ \sum_{\lambda \in T_{\beta,M}}c_\lambda \lambda^{2s-2t_0-1} \Lambda_f \left(s-t_0, \alpha \lambda^{-1},\sin \right) \right] \\
	=\Res_{s=s_0} \left[ i^{-[1+t_0]} (N\alpha^2)^{s-1/2} \alpha^{-t_0} \frac{(2 \pi i)^{t_0}\Gamma_{\mathbb{R}}(1-s+t_0)^2}{t_0!\Gamma_{\mathbb{R}}\left(1-s+ [ t_0 ] \right)^2} \Lambda_g \left(s,\beta,\cos^{([1+t_0])} \right) \right]	\\
	+ (-1)^j\frac{\sqrt{\pi}}{(j!)^2} G_j(1) \alpha^{s_0-t_0-1} \sum_{\lambda \in T_{\beta,M}}c_\lambda\lambda^{s_0-t_0}\log(\lambda) \Res_{s=1}\Lambda_f(s) 	\\
	+ (-1)^j\frac{\sqrt{\pi}}{(j!)^2} G_j(1) \alpha^{s_0-t_0-1} \log(\alpha) \sum_{\lambda \in T_{\beta,M}}c_\lambda\lambda^{s_0-t_0} \Res_{s=1}\Lambda_f(s) \\
	+ (-1)^j \frac{\sqrt{\pi}}{(j!)^2} J_j(0)G_j(0) \alpha^{s_0-t_0} \sum_{\lambda \in T_{\beta,M}}c_\lambda \lambda^{s_0-t_0-1} \Res_{s=0}\Lambda_f(s) \\
	+ (-1)^j \frac{\sqrt{\pi}}{(j!)^2} J_j(1)G_j(1) \alpha^{s_0-t_0-1} \sum_{\lambda \in T_{\beta,M}}c_\lambda\lambda^{s_0-t_0} \Res_{s=1}\Lambda_f(s).
\end{multline}
Equation~\eqref{residues_simple} follows from equation~\eqref{eq.above}, using equation~\eqref{lambdas} and noting that $\delta_{t_0}(t_0-s_0)=\delta_0(s_0)$ as in Lemma~\ref{lem:res_1}.
\end{proof}
\begin{prop}\label{prop:epsilon_1_holo}
Make the assumptions of Theorem \ref{thm:0Converse}. If $\epsilon = 1$, then $\Lambda_f(s)$ and $\Lambda_g(s)$ are entire and bounded in vertical strips.
\end{prop}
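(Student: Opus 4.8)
The plan is to show that, under the hypothesis $\epsilon=1$, all residues of $\Lambda_f(s)$ and $\Lambda_g(s)$ in the set $\{0,1\}$ vanish. By the discussion following Corollary~\ref{cor:sigafter}, the functions $\Lambda_f(s)$ and $\Lambda_g(s)$ are holomorphic on $\mathbb{C}\setminus\{0,1\}$, bounded on each strip $\{\Re(s)\in[c,d],\ |\Im(s)|\ge1\}$, and, by Lemma~\ref{lem:L_functions_at_most_simple_poles}, have at most simple poles at $s\in\{0,1\}$. The functional equation~\eqref{eq:FE} with $q=1$ reads $\Lambda_f(s)=(-1)^{\epsilon}N^{\frac12-s}\Lambda_g(1-s)$, so $\Res_{s=1}\Lambda_f(s)=0$ is equivalent to $\Res_{s=0}\Lambda_g(s)=0$, and symmetrically with $f$ and $g$ interchanged; hence it suffices to prove $\Res_{s=1}\Lambda_f(s)=0$ and $\Res_{s=1}\Lambda_g(s)=0$. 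These being simple poles, the vanishing of all four residues makes $\Lambda_f$ and $\Lambda_g$ entire, and an entire function bounded on $\{\Re(s)\in[c,d],\ |\Im(s)|\ge1\}$ is, being continuous, bounded on the whole strip.

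To prove $\Res_{s=1}\Lambda_f(s)=0$ I would invoke Lemma~\ref{lem:residue_2} with $\alpha=q\in\mathcal{P}\cup\{1\}$, so that $\beta=-\tfrac1{Nq}$, with $s_0=0$, and with $t_0\in\mathbb{Z}_{>1}$ even, so that $[t_0]=[s_0]=0$ and $j=t_0/2$; one then chooses $\ell_0$ and $M$ large enough that $M\ge2\ell_0>t_0+\tfrac32$. The hypotheses of Lemmas~\ref{lem:res_1} and~\ref{lem:residue_2} are met: $\Lambda_g(s,\beta,\cos^{(r)})\in\mathcal{M}(-\infty,\infty)$ by Proposition~\ref{prop.Atwistsmero} (with $b=-1$), while $\alpha=q$ is an integer, so $\Lambda_f(s,\alpha,\cos^{(0)})=\Lambda_f(s)\in\mathcal{M}(-\infty,\infty)$ and $\Lambda_f(s,\alpha,\cos^{(1)})=0$. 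In equation~\eqref{residues_simple} the left-hand side vanishes, because for $\lambda\in T_{\beta,M}$ (a set of primes $p\equiv1\ (\mathrm{mod}\ Nq)$, with $\alpha\lambda^{-1}=q/p$) the additive-twist functional equation~\eqref{functional_equations_additive_twists} with $r=1$, together with $a_n=O(n^{\sigma})$ and $\sigma<1$, shows that $\Lambda_f(s,q/p,\sin)$ is holomorphic for $\Re(s)<-\sigma$, hence at $s_0-t_0=-t_0$. With $\delta_0(s_0)=1$, equation~\eqref{residues_simple} then reduces to
\[
0 = D + (-1)^{j}\frac{\sqrt{\pi}}{(j!)^2}\,G_j(1)\,\alpha^{-t_0-1}\Bigl(\sum_{\lambda\in T_{\beta,M}}c_\lambda\lambda^{-t_0}\log\lambda + \log\alpha + J_j(1)\Bigr)\Res_{s=1}\Lambda_f(s),
\]
where $D$ is the residue at $s=0$ of the $\Lambda_g$-term appearing in~\eqref{residues_simple} and depends only on $N,q,t_0$, not on the coefficients $c_\lambda$.

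Finally I would feed in the flexibility from Lemma~\ref{lem:arbitrary_value}: replacing $T_{\beta,M}$ by $T_{\beta,M}\cup\{\lambda_0\}$, one may choose the $c_\lambda$ so that equation~\eqref{eq.modifiedlambda} holds, which makes $z:=\sum_{\lambda}c_\lambda\lambda^{-t_0}\log\lambda$ an arbitrary complex number while retaining $\sum_{\lambda}c_\lambda\lambda^{-t}=\delta_{t_0}(t)$ for $t\in\{0,\dots,M-1\}$; exactly as in the proof of Lemma~\ref{lem:L_functions_at_most_simple_poles}, the derivation of~\eqref{residues_simple} survives this enlargement unchanged. The displayed identity then holds for every $z\in\mathbb{C}$ with $D$ fixed, so the coefficient of $z$ must vanish, giving $(-1)^{j}\frac{\sqrt{\pi}}{(j!)^2}G_j(1)\alpha^{-t_0-1}\Res_{s=1}\Lambda_f(s)=0$; since $G_j(1)=j!\,(\tfrac12)_j/\sqrt{\pi}\ne0$, this yields $\Res_{s=1}\Lambda_f(s)=0$. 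Interchanging the roles of $f$ and $g$ gives $\Res_{s=1}\Lambda_g(s)=0$, and the reduction in the first paragraph finishes the proof. The step that needs care is precisely this last one: we cannot evaluate (nor even establish the vanishing of) the quantity $D$ — the twist $\Lambda_g(s,\beta,\cos^{([1+t_0])})$ is only known to lie in $\mathcal{M}(-\infty,\infty)$ and may genuinely have a double pole at $s=0$ — but this is harmless, since $D$ and the additional $\log\alpha$ and $J_j(1)$ contributions are all independent of the free parameter $z$, so that isolating the coefficient of $z$ extracts $\Res_{s=1}\Lambda_f(s)$ cleanly.
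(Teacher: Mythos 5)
Your proposal is correct and follows essentially the same route as the paper: reduce to $\Res_{s=1}\Lambda_f(s)=0$ via the functional equation, apply Lemma~\ref{lem:residue_2} at $s_0=0$ with $t_0$ even so the left-hand side of~\eqref{residues_simple} vanishes, and then use Lemma~\ref{lem:arbitrary_value} to make the $\sum_\lambda c_\lambda\lambda^{-t_0}\log(\lambda)$ term an arbitrary complex number while all other terms (including the unevaluated $\Lambda_g$-residue) stay fixed, forcing $G_{t_0/2}(1)\Res_{s=1}\Lambda_f(s)=0$. This is precisely the paper's argument (its equation~\eqref{no_poles} combined with the final paragraph of the proof of Lemma~\ref{lem:L_functions_at_most_simple_poles}).
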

That is, assumption (2) in Theorem~\ref{thm:EntireConverse} is valid.
\begin{proof} 
Since $\Lambda_f(s)$ and $\Lambda_g(s)$ are in $\mathcal{M}(-\infty,\infty)$, it suffices to prove entirety. 
Mimicking the argument surrounding equation~\eqref{eq.suffices}, we see it suffices to show that $\Res_{s=1}\Lambda_f(s)=0$ (reversing the roles of $f$ and $g$ will yield that $\Res_{s=1}\Lambda_g(s) = 0$).

Consider $t_0\in\mathbb{Z}_{>1}$ such that $[t_0]=0$, 
let $\alpha\in\mathbb{Q}_{>0}$ and $\beta=-1/N\alpha$ be such that, for $r\in\{0,1\}$, the functions $\Lambda_g \left( s, \beta, \cos^{(r)}\right)$ and $\Lambda_f \left( s, \alpha, \cos^{(r)}\right)$ continue to elements of $\mathcal{M}(-\infty, \infty)$, 
let $M>t_0+3/2$, 
let $T_{\beta,M}$ be a set of size $M$,
and, for $\lambda\in T_{\beta,M}$, choose $c_{\lambda}$ satisfying equation~\eqref{lambdas}.
We may apply Lemma \ref{lem:residue_2} in the case that $s_0=0$.
For $\lambda\in T_{\beta,M}$, our assumptions on $t_0$, $\{a_n\}_{n=1}^{\infty}$ and $\sigma$ imply, using equation~\eqref{functional_equations_additive_twists}, that $\Lambda_f(s-t_0,\alpha \lambda^{-1},\sin)$ is holomorphic at $s = 0$. 
Taking $s_0=0$, equation~\eqref{residues_simple} simplifies to:
\begin{multline}\label{no_poles}
-\Res_{s=0} \left[ i N^{-1/2} \frac{(2 \pi i)^{t_0}\Gamma_{\mathbb{R}}(1-s+t_0)^2}{t_0!\Gamma_{\mathbb{R}}\left(1-s+ [ t_0 ] \right)^2} \Lambda_g \left(s,\beta,\cos^{([1+t_0])} \right) \right]	\\
	= (-1)^\frac{t_0}{2}\frac{\sqrt{\pi}}{(\frac{t_0}{2}!)^2} G_{t_0/2}(1)  \sum_{\lambda \in T_{\beta,M}}c_\lambda\lambda^{-t_0}\log(\lambda) \Res_{s=1}\Lambda_f(s) 	\\
	+ (-1)^\frac{t_0}{2}\frac{\sqrt{\pi}}{(\frac{t_0}{2}!)^2} G_{t_0/2}(1)  \log(\alpha) \Res_{s=1}\Lambda_f(s) \\
	+ (-1)^\frac{t_0}{2} \frac{\sqrt{\pi}}{(\frac{t_0}{2}!)^2} J_{t_0/2}(1)G_{t_0/2}(1)\Res_{s=1}\Lambda_f(s).
\end{multline}
We conclude that $\Res_{s=1}\Lambda_f(s) = 0$ via an argument analogous to that presented in the final paragraph in the proof of Lemma~\ref{lem:L_functions_at_most_simple_poles}, using \eqref{no_poles} in place of equation~equation~\eqref{no_double_poles}. 
\end{proof}
\begin{lem}\label{lem:double_pole_residue_epsilon_0}
Let $\alpha,\beta,r,s_0,t_0,j,M,c_{\lambda}$ be as in Lemma~\ref{lem:res_1}. If $\epsilon = 0$, then
\begin{multline}\label{double_pole_residue_epsilon_0}
	\sum_{\lambda \in T_{\beta,M}} c_\lambda \lambda^{2s_0-2t_0-1} \Res_{s=s_0} (s-s_0) \Lambda_f \left(s-t_0,\alpha \lambda^{-1},\cos \right)  		\\	
	= i^{-[t_0]} (N\alpha^2)^{s_0-1/2} \alpha^{-t_0} \frac{(2 \pi i)^{t_0}\Gamma_{\mathbb{R}}(1-s_0+t_0)^2}{t_0!\Gamma_{\mathbb{R}}\left(1-s_0+ [ t_0 ] \right)^2} \Res_{s=s_0} (s-s_0) \Lambda_g \left(s,\beta,\cos^{([t_0])} \right)  \\
+ \delta_0(s_0)(-1)^{j+1} \frac{(1/2)_j}{j!}\alpha^{s_0-t_0-1} \Res_{s=1} (s-1) \Lambda_f(s) .
\end{multline}
\end{lem}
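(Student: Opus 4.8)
The plan is to repeat the argument proving Lemma~\ref{lem:res_1} with $\epsilon=0$ throughout, so that $\cos^{(\epsilon)}=\cos$, being mindful that when $\epsilon=0$ we have no analogue of Lemma~\ref{lem:L_functions_at_most_simple_poles} and hence must allow $\Lambda_f(s)$ to have genuine double poles at $0$ and $1$. First I would set $\epsilon=0$ in the function appearing in equation~\eqref{long_lambda} and multiply through by $(s-s_0)$. Since $M\geq2\ell_0>t_0-s_0+\tfrac32$, we have $s_0>t_0+\tfrac32-2\ell_0$, so Lemma~\ref{lem:holomorphic_function} shows that function is holomorphic at $s=s_0$; hence its product with $(s-s_0)$ vanishes at $s_0$ and has residue $0$ there.

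The next step is to compute, term by term, the residue at $s=s_0$ of $(s-s_0)$ times the function of equation~\eqref{long_lambda} with $\epsilon=0$, and equate the total to $0$. The prefactor $i^{-[t_0]}(N\alpha^2)^{s-\frac12}\alpha^{-t_0}(2\pi i)^{t_0}\Gamma_{\mathbb{R}}(1-s+t_0)^2/\bigl(t_0!\,\Gamma_{\mathbb{R}}(1-s+[t_0])^2\bigr)$ multiplying $\Lambda_g$ is holomorphic and non-vanishing at $s_0$, because $1-s_0+t_0$ and $1-s_0+[t_0]$ are positive integers; this group therefore contributes the first term on the right of equation~\eqref{double_pole_residue_epsilon_0}. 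Using that $\lambda^{2s-2t_0-1}$ is holomorphic and non-zero at $s_0$, the twisted-$\Lambda_f$ group contributes $-\sum_{\lambda}c_\lambda\lambda^{2s_0-2t_0-1}\Res_{s=s_0}(s-s_0)\Lambda_f(s-t_0,\alpha\lambda^{-1},\cos)$. Finally, arguing as in equation~\eqref{eq.res2laI} (valid since $0\leq j<\ell_0$) and using $\lambda^{2s-2t_0-1}(\lambda^{-1}\alpha)^{s-t_0-\frac12}=(\lambda\alpha)^{s-t_0-\frac12}$, the $\mathcal{I}_k$ and $\widetilde{\mathcal{I}}_k$ group contributes $2\sum_{\lambda}c_\lambda(\lambda\alpha)^{s_0-t_0-\frac12}\mathcal{I}_j(\alpha\lambda^{-1})$. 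Rearranging then leaves the left side of equation~\eqref{double_pole_residue_epsilon_0} expressed through the $\Lambda_g$-term and $2\sum_\lambda c_\lambda(\lambda\alpha)^{s_0-t_0-\frac12}\mathcal{I}_j(\alpha\lambda^{-1})$.

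The crux, and the point at which $\epsilon=0$ really differs from $\epsilon=1$, is the evaluation of $\mathcal{I}_j(\alpha\lambda^{-1})$. As in equation~\eqref{residue_eval_1} one writes it as $(-1)^j\tfrac{\sqrt\pi}{(j!)^2}$ times the sum of the residues of $\Lambda_f(s)G_j(s)(\alpha\lambda^{-1})^{\frac12-s}$ at $s\in\{0,1\}$, with $G_j$ as in equation~\eqref{eq:G} for $\epsilon=0$. Since $t_0>1>s_0$ forces $j\geq1$, both $\bigl(\tfrac s2\bigr)_j$ and $1/\Gamma\bigl(\tfrac s2\bigr)$ have a simple zero at $s=0$, so $G_j$ vanishes to order $2$ there; as $\Lambda_f(s)$ has at most a double pole at $0$, the product $\Lambda_f(s)G_j(s)(\alpha\lambda^{-1})^{\frac12-s}$ is holomorphic at $s=0$ and contributes no residue. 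At $s=1$ only $1/\Gamma\bigl(\tfrac{1-s}2\bigr)$ vanishes, to first order, giving $G_j(s)=-\tfrac{j!\,(1/2)_j}{2\sqrt\pi}(s-1)+O((s-1)^2)$, which plays here the role of equation~\eqref{eq.Gexpansion}; since $\Lambda_f$ has at most a double pole at $1$, the product has at most a simple pole there, and expanding yields $\mathcal{I}_j(\alpha\lambda^{-1})=\tfrac{(-1)^{j+1}(1/2)_j}{2\,j!}(\alpha\lambda^{-1})^{-\frac12}\Res_{s=1}(s-1)\Lambda_f(s)$. Substituting this, using $(\lambda\alpha)^{s_0-t_0-\frac12}(\alpha\lambda^{-1})^{-\frac12}=\lambda^{s_0-t_0}\alpha^{s_0-t_0-1}$, and invoking equation~\eqref{lambdas} — which gives $\sum_\lambda c_\lambda\lambda^{s_0-t_0}=\delta_{t_0}(t_0-s_0)=\delta_0(s_0)$, the index $t_0-s_0$ lying in $\{0,\dots,M-1\}$ — delivers exactly the second term on the right of equation~\eqref{double_pole_residue_epsilon_0}. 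I expect the only delicate point to be this local analysis of $G_j$ at $\epsilon=0$: one must see that the double zero of $G_j$ at the origin absorbs the (possible) double pole of $\Lambda_f$ there, leaving a single $\Res_{s=1}(s-1)\Lambda_f$ contribution coming from the simple zero at $s=1$, and one must track the constants and signs precisely throughout.
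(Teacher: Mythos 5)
Your proposal is correct and follows essentially the same route as the paper: it repeats the residue computation of Lemma~\ref{lem:res_1} for the holomorphic-at-$s_0$ function of equation~\eqref{long_lambda} with $\epsilon=0$, and the only genuinely new ingredient — that for $\epsilon=0$ and $j\geq1$ the factor $G_j(s)$ has a double zero at $s=0$ (absorbing any double pole of $\Lambda_f$) and a simple zero at $s=1$ with $G_j'(1)=-\tfrac{(1/2)_j\,j!}{2\sqrt{\pi}}$, yielding $\mathcal{I}_j(\alpha\lambda^{-1})=\tfrac{(-1)^{j+1}(1/2)_j}{2\,j!}(\alpha\lambda^{-1})^{-\frac12}\Res_{s=1}(s-1)\Lambda_f(s)$ — is exactly the point the paper isolates. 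Your constants, signs, and the final use of $\sum_\lambda c_\lambda\lambda^{s_0-t_0}=\delta_0(s_0)$ all check out.
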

\begin{proof}
The proof is similar to that of Lemma \ref{lem:res_1}. In place of equation~\eqref{eq.resressumc}, we get:
\begin{multline*}
	\Res_{s = s_0}\left[ \sum_{\lambda \in T_{\beta,M}} c_\lambda \lambda^{2s-2t_0-1} (s-s_0) \Lambda_f \left( s-t_0,\alpha \lambda^{-1},\cos \right) \right] \\
	= \Res_{s=s_0} \left[  i^{-[t_0]}(N \alpha^2)^{s-\frac{1}{2}}\alpha^{-t_0} \frac{(2 \pi i)^{t_0}\Gamma_{\mathbb{R}}(1-s+t_0)^2(s-s_0)\Lambda_g \left( s,\beta,\cos^{([t_0])} \right)}{t_0!\Gamma_{\mathbb{R}}\left(1-s+[t_0]\right)^2}\right] \\
	+ 2\sum_{\lambda \in T_{\beta,M}} c_\lambda (\lambda \alpha)^{s_0-t_0-\frac{1}{2}} \mathcal{I}_j(\alpha \lambda^{-1}).
	\end{multline*}
We may compute $\mathcal{I}_j(\alpha\lambda^{-1})$ as in equation~\eqref{residue_eval_1}. When $\epsilon=0$, the function $G_j(s)$ has a double zero at $s=0$, and a single zero at $s=1$. For $s$ close to $1$, we have the expansion:
\begin{equation*}\label{eq.omg}
	\frac{1}{\Gamma\left(\frac{1-s}{2}\right)} = - \frac{s-1}{2}  + O\left( \left(\frac{1-s}{2}\right)^2\right),
\end{equation*}
and so:
\begin{equation} \label{eq.I_twiddle_alpha_lambda}
		\mathcal{I}_j(\alpha \lambda^{-1}) = \frac{(-1)^{j+1}(1/2)_j}{2(j!)} (\alpha \lambda^{-1})^{-\frac{1}{2}} \Res_{s=1} (s-1) \Lambda_f(s).
\end{equation}
We conclude as in the proof of Lemma~\ref{lem:res_1}, noting again that $\delta_{t_0}(t_0-s_0)=\delta_0(s_0)$.
\end{proof}
\begin{lem}\label{lem:twistssimplepoles}
Make the assumptions of Theorem~\ref{thm:0Converse}. If $\psi$ is a primitive Dirichlet character with conductor $q \in \mathcal{P}$, then the twisted $L$-functions $\Lambda_f(s,\psi)$ and $\Lambda_g(s,\psi)$ extend to holomorphic functions except for at most simple poles in the set $\{0,1\}$.
\label{lem:character_only_simple}
\end{lem}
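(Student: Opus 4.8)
The plan is to reduce the character twists to finite linear combinations of additive twists, localize their poles, and then suppress the double-pole parts using the residue identities of Lemmas~\ref{lem:res_1}, \ref{lem:residue_2} and \ref{lem:double_pole_residue_epsilon_0}.

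First I would observe that, by equation~\eqref{character_twists_in_terms_of_additive_twists} and its analogue for $f$, each of $\Lambda_f(s,\psi)$ and $\Lambda_g(s,\psi)$ is a finite $\mathbb{C}$-linear combination of additive twists $\Lambda_f\!\left(s,\tfrac{b}{q},\cos^{(\epsilon_\psi)}\right)$, $\Lambda_g\!\left(s,\tfrac{b}{q},\cos^{(\epsilon_\psi)}\right)$ with $(b,q)=1$; these lie in $\mathcal{M}(-\infty,\infty)$ by Corollary~\ref{cor:twisted_mero}, so $\Lambda_f(s,\psi)$ and $\Lambda_g(s,\psi)$ are holomorphic off the integers with at most double poles there. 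Since $L_f(s,\psi)$ and $L_g(s,\bar\psi)$ converge for $\Re(s)>1+\sigma$, and equation~\eqref{eq:FE} then forces holomorphy for $\Re(s)<-\sigma$, the hypothesis $0<\sigma<1$ confines all poles to $\{0,1\}$. Equation~\eqref{eq:FE} also identifies a double pole of $\Lambda_f(s,\psi)$ at $s=1$ with a double pole of $\Lambda_g(s,\bar\psi)$ at $s=0$, and symmetrically, so it suffices to prove $\Res_{s=0}s\,\Lambda_f(s,\psi)=\Res_{s=0}s\,\Lambda_g(s,\psi)=0$ for every primitive $\psi$ of conductor $q\in\mathcal{P}$.

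By equation~\eqref{character_twists_in_terms_of_additive_twists} this reduces to understanding $\Res_{s=0}s\,\Lambda_g\!\left(s,\tfrac bq,\cos^{(\epsilon_\psi)}\right)$. Writing $\beta=\tfrac bq$ (with the representative chosen so that $\beta<0$) and $\alpha=-\tfrac{1}{N\beta}\in\mathbb{Q}_{>0}$, one has $\beta=-\tfrac{1}{N\alpha}$, the additive twists $\Lambda_f(s,\alpha\lambda^{-1},\cos^{(r)})$ appearing on the right of the residue identities have denominators $Np$ with $p\in\mathcal{P}$ (so Proposition~\ref{prop:twisted_mero_1} applies to them), and the $\delta_0$-term in Lemma~\ref{lem:double_pole_residue_epsilon_0} carries $\Res_{s=1}(s-1)\Lambda_f(s)$ with a coefficient whose $\alpha$-dependence cancels the explicit $\alpha$-powers, hence is independent of $b$. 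When $\epsilon=1$, Proposition~\ref{prop:epsilon_1_holo} makes this term vanish; when $\epsilon=0$ it survives, but, being $b$-independent, is annihilated by $\sum_{(b,q)=1}\bar\psi(-b)=0$. In either case the double-pole coefficient of $\Lambda_g(s,\psi)$ at $s=0$ vanishes once the remaining (auxiliary) term does; interchanging $f$ and $g$ and passing back through equation~\eqref{eq:FE} then yields the conclusion for both functions at both $0$ and $1$.

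The hard part is showing that the remaining weighted sum $\sum_\lambda c_\lambda\lambda^{-2t_0-1}\Res_{s=-t_0}(s+t_0)\Lambda_f(s,\alpha\lambda^{-1},\cos^{(r)})$ vanishes, equivalently that the auxiliary twists $\Lambda_f\!\left(s,-q/(Np),\cos^{(r)}\right)$ are holomorphic at the negative integer $-t_0$; Proposition~\ref{prop:twisted_mero_1} only provides membership in $\mathcal{M}(-\infty,\infty)$. I would resolve this by re-applying the residue identities at negative even values of $s_0$, where the $\delta_0$-term is absent and $\Lambda_g\!\left(s,\tfrac bq,\cos^{(r)}\right)$ is holomorphic (its poles lying in $\{0,1\}$ by the prime-conductor functional equation~\eqref{functional_equations_additive_twists}), and then exploiting the freedom in the Vandermonde coefficients afforded by Lemma~\ref{lem:arbitrary_value} together with the density of $T_\beta$ to isolate and kill each residue $\Res_{s=m}(s-m)\Lambda_f\!\left(s,-q/(Np),\cos^{(r)}\right)$ at negative integers $m$.
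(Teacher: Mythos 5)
Your reduction and endgame do match the paper's: you pass from $\Lambda_g(s,\psi)$ to the additive twists at denominator $q$ via \eqref{character_twists_in_terms_of_additive_twists}, confine all poles to $\{0,1\}$ using convergence, \eqref{eq:FE} and $0<\sigma<1$, and, in the $\epsilon=0$ even-character case, you correctly foresee that the residue identity only makes $\Res_{s=0}s\Lambda_g\left(s,\tfrac bq,\cos\right)$ equal to a $b$-independent multiple of $\Res_{s=1}(s-1)\Lambda_f(s)$ (this is exactly \eqref{no_dependence_on_q_double}), which is then annihilated by character orthogonality, while Proposition~\ref{prop:epsilon_1_holo} disposes of it when $\epsilon=1$. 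You also correctly isolate the hard point: the right-hand sides of the residue identities involve the auxiliary twists $\Lambda_f\left(s,\pm q/(Np),\cos^{(r)}\right)$, whose denominator $Np$ is not coprime to $N$, and one must know these have at most simple poles at the relevant negative integers before anything follows at $s_0=0$.

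Your resolution of that hard point, however, has genuine gaps. First, the assertion that $\Lambda_g\left(s,\tfrac bq,\cos^{(r)}\right)$ has poles only in $\{0,1\}$ ``by the prime-conductor functional equation'' is justified only for $r$ odd: for $r$ even, \eqref{functional_equations_additive_twists} contains the terms $\Lambda_g(s)-\tfrac{q}{q-1}\Lambda_g(s,\psi_0)$, and the principal-character twist $\Lambda_g(s,\psi_0)$ is itself a linear combination of the additive twists $\Lambda_g\left(s,\tfrac bq,\cos\right)$ you are trying to control, so the deduction is circular. This is precisely why the paper does not argue this way: its Steps 1--2 first treat $\beta$ with denominator $Nq$, where the auxiliary twists have denominator $p$ coprime to $N$ so that \eqref{functional_equations_additive_twists} applies directly ($\epsilon=1$), and where the $\epsilon=0$ case is handled by the same-numerator pair $\beta=\tfrac b{Nq}$, $\beta'=\tfrac b{Nq'}$ so that the uncontrolled untwisted and principal-character terms cancel in the difference (the device of Proposition~\ref{prop:twisted_mero_1}), with vanishing then extracted by varying $t_0$; only afterwards, reversing the roles of $f$ and $g$, do these statements control the denominator-$Np$ twists that enter when $\beta=\tfrac bq$, $s_0=0$, $t_0\in\{2,3\}$ in \eqref{odd_residues} and \eqref{double_pole_residue_epsilon_0}. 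Second, your final step --- ``isolate and kill each residue $\Res_{s=m}(s-m)\Lambda_f\left(s,-q/(Np),\cos^{(r)}\right)$'' using Lemma~\ref{lem:arbitrary_value} and the density of $T_\beta$ --- has no working mechanism: for fixed data the identity gives a single linear relation $\sum_\lambda c_\lambda\lambda^{2m-1}R_\lambda=0$ among the $M$ unknown residues $R_\lambda$ (one per prime occurring in $T_{\beta,M}$), the $c_\lambda$ are pinned down by \eqref{lambdas}, and the one extra degree of freedom in Lemma~\ref{lem:arbitrary_value} only prescribes $\sum_\lambda c_\lambda\lambda^{-t_0}\log\lambda$, which the paper uses to vary the coefficient of $\Res_{s=1}(s-1)\Lambda_f(s)$, not to separate the $\lambda$-terms. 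Without the paper's two-stage bootstrap (denominator $Nq$ first, then $f\leftrightarrow g$ symmetry to reach denominator $Np$), your argument does not close.
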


\begin{proof}
Mimicking the argument surrounding equation~\eqref{eq.suffices}, we see it suffices to show that $\Res_{s = 0}  s \Lambda_g(s,\psi)=0$ (reversing the roles of $f$ and $g$ will yield that $\Res_{s=0}s\Lambda_f(s,\psi) = 0$).

Given $s_0 \in \mathbb{Z}_{<1}$, choose $\ell_0,M,t_0\in\mathbb{Z}_{>1}$ such that $M\geq2\ell_0>t_0-s_0+\frac32$. 
For $\alpha\in\mathbb{Q}_{>0}$ and $\beta=-1/N\alpha$ choose $T_{\beta,M}\subset T_{\beta}$ of size $M$, and, for each $\lambda\in T_{\beta,M}$, choose $c_{\lambda}\in\mathbb{C}$ satisfying equation~\eqref{lambdas}. 
Since $s_0 > t_0-2\ell_0+\frac32$ the function in equation \eqref{long_lambda} is holomorphic at $s=s_0$. 
Multiplying this function by $(s-s_0)$ and taking the residue at $s = s_0$, we see that:
\begin{multline}\label{odd_residues}
	i^{-[\epsilon + t_0]}(N \alpha^2)^{s_0-\frac{1}{2}}\alpha^{-t_0} \frac{(2 \pi i)^{t_0}\Gamma_{\mathbb{R}}(1-s_0+t_0)^2}{t_0!\Gamma_{\mathbb{R}}\left(1-s_0+ [ t_0 ] \right)^2}  \Res_{s=s_0} (s-s_0) \Lambda_g \left( s,\beta,\cos^{([\epsilon + t_0])} \right)  \\
	=  \sum_{\lambda \in T_{\beta,M}} c_\lambda \lambda^{2s_0-2t_0-1} (-i\pi)^\epsilon \Res_{s = s_0}(s-s_0) \Lambda_f \left( s-t_0,\alpha \lambda^{-1},\cos^{(\epsilon)} \right). 
\end{multline}
The remainder of the proof is split into five steps.

\subsubsection*{Step 1: Additive twists ($\epsilon = 1$)} 
First set $\beta = \frac{b}{Nq}$, for some $b<0$ such that $(b,Nq)=1$.  
For $\lambda\in T_{\beta}$, it follows that $\alpha \lambda^{-1} = \frac{q}{p}$ for some $p \equiv b \mod Nq$. 
Using our assumptions on $s_0$, $t_0$, $\{a_n\}_{n=1}^{\infty}$, and $\sigma$, we deduce from equation~\eqref{functional_equations_additive_twists} that $\Lambda_f(s-t_0,\alpha \lambda^{-1},\sin)$ is holomorphic at $s = s_0$. 
It follows that the expression in the right-hand side of equation \eqref{odd_residues} is zero. 
Choosing $t_0$ of different parity to $s_0$, we get $[\epsilon+t_0]=[s_0]$. Therefore $\Res_{s = s_0} (s-s_0) \Lambda_g \left( s,\frac{b}{Nq}, \cos^{([s_0])} \right) = 0$ for all $s_0 < 1$. 

If instead $b>0$, and still satisfying $(b,Nq)=1$, then we can take $b' \in \mathcal{P}$ with $b' \equiv -b \mod Nq$, so that we have $\Lambda_g \left(s,\frac{b}{Nq},\cos^{([s_0])} \right) = \Lambda_g \left(s,-\frac{b'}{Nq},\cos^{([s_0])} \right)$. 
Combining the cases $b<0$ and $b>0$, we observe that $\Res_{s = s_0} (s-s_0) \Lambda_g \left( s,\beta, \cos^{([s_0])} \right) = 0$ for all $\beta = \frac{b}{Nq}$ with $(b,Nq)=1$. 

Taking $t_0$ of the same parity as $s_0$, we may apply equation \eqref{messy_1} and Proposition \ref{prop:epsilon_1_holo} to obtain
\begin{multline}\label{even_residues_e_1}
		i^{-[1+t_0]} (N\alpha^2)^{s_0-\frac12} \alpha^{-t_0} \frac{(2 \pi i)^{t_0}\Gamma_{\mathbb{R}}(1-s_0+t_0)^2}{t_0!\Gamma_{\mathbb{R}}\left(1-s_0+ [ t_0 ] \right)^2}\Res_{s=s_0} (s-s_0) \Lambda_g \left(s,\beta,\cos^{([1+t_0])} \right)  \\
		=i \pi \sum_{\lambda \in T_{\beta,M}} c_\lambda \lambda^{2s_0-2t_0-1} \Res_{s=s_0} (s-s_0) \Lambda_f \left(s-t_0,\alpha \lambda^{-1},\sin \right).
\end{multline}
If $\beta = \frac{b}{Nq}$, for some $b\in\mathbb{Z}$ such that $(b,Nq)=1$, then, using equation~\eqref{functional_equations_additive_twists}, we conclude as above that the right-hand side of equation \eqref{even_residues_e_1} vanishes. 
Hence, for all $s_0 < 1$, we have $\Res_{s=s_0}(s-s_0)\Lambda_g \left( s, \frac{b}{Nq},\cos^{([1+s_0])} \right) = 0$. 

On the other hand, if $\beta = \frac{b}{q}$ for some $b<0$ such that $(b,q)=1$, then $\lambda^{-1}\alpha = \frac{q}{Np}$ for some prime $p \equiv -b \mod q$ and the right-hand sides of equations \eqref{odd_residues} and \eqref{even_residues_e_1} vanish. 
Thus, applying equation \eqref{odd_residues} with $s_0 = 0$ and $t_0 = 3$, we obtain that $\Res_{s = 0}s\Lambda_g\left(s,\frac{b}{q},\cos\right) = 0$, and, applying equation \eqref{even_residues_e_1} with $s_0 = 0$ and $t_0 = 2$, we infer that $\Res_{s = 0} s \Lambda_g\left(s,\frac{b}{q},\sin\right)  = 0$. 

\subsubsection*{Step 2: Additive twists ($\epsilon = 0$)} 
Consider two negative rational numbers $\beta, \beta' $ with the same numerator. 
For $t_0 \in \mathbb{Z}_{>1}$ we can choose, as in the proof of Proposition~\ref{prop.Atwistsmero}, a set $T_M \subset T_\beta \cap T_{\beta'}$ of size $M$ as in Step 1, and we can find $c_\lambda \in \C$ such that equation~\eqref{lambdas} is satisfied. 
For $s_0\in\mathbb{Z}_{<1}$, applying equation~\eqref{odd_residues} to both $\beta$ and $\beta'$, then subtracting, we compute:
\begin{multline}\label{odd_residues_e_0}
	i^{-[t_0]}N^{s_0-\frac{1}{2}} \frac{(2 \pi i)^{t_0}\Gamma_{\mathbb{R}}(1-s_0+t_0)^2}{t_0!\Gamma_{\mathbb{R}}\left(1-s_0+ [ t_0 ] \right)^2} \left( \alpha^{2s_0-t_0-1} \Res_{s=s_0} (s-s_0) \Lambda_g \left( s,\beta,\cos^{([ t_0])} \right)\right.\\
	\left.   - \alpha'^{2s_0-t_0-1} \Res_{s=s_0} (s-s_0) \Lambda_g \left( s,\beta',\cos^{([ t_0])} \right)  \right) \\
	=  \sum_{\lambda \in T_M} c_\lambda \lambda^{2s_0-2t_0-1} \Res_{s = s_0}\left[ (s-s_0) \left( \Lambda_f \left( s-t_0,\alpha \lambda^{-1},\cos \right) - \Lambda_f \left( s-t_0,\alpha' \lambda^{-1},\cos \right) \right) \right],	
\end{multline}
where we write $\alpha'=-1/N\beta'$. 

Consider $\beta = \frac{b}{Nq}$ and $\beta' = \frac{b}{Nq'}$ for some $b<0$ and $q'\in\mathcal{P}$ such that $q\neq q'$ and $(b,Nq)=(b,Nq')=1$. 
Since $s_0-t_0<-\sigma$, equation~\eqref{functional_equations_additive_twists} implies that the difference $\Lambda_f(s-t_0,\alpha \lambda^{-1},\cos)-\Lambda_f(s-t_0,\alpha' \lambda^{-1},\cos)$ is holomorphic at $s = s_0$.
Thus the last line of equation \eqref{odd_residues_e_0} vanishes, so that:
\begin{equation*}
	\alpha^{2s_0-t_0-1} \Res_{s=s_0} (s-s_0) \Lambda_g \left( s,\beta,\cos^{([ t_0])} \right)   = \alpha'^{2s_0-t_0-1} \Res_{s=s_0} (s-s_0) \Lambda_g \left( s,\beta',\cos^{([ t_0])} \right)  .
\end{equation*}
Replacing $t_0$ by another integer of the same parity does not alter the residues on either side of the above equation. Choose $t_0$ of different parity to $s_0$, so that $[t_0]=[s_0+1]$. Since $\alpha \neq \alpha'$, by varying $t_0$ whilst preserving its parity, we deduce that $\Res_{s=s_0} (s-s_0) \Lambda_g \left( s,\beta,\cos^{([s_0+1])} \right)   = 0$ for all $s_0<1$. 
The condition that $b<0$ can be relaxed by mimicking the argument presented in Case 1. 

On the other hand, choosing $t_0$ to have the same parity as $s_0$, equation \eqref{double_pole_residue_epsilon_0} applies to both $\beta$ and $\beta'$. Subtracting these cases of equation \eqref{double_pole_residue_epsilon_0}, and noting that 
\[\Res_{s = s_0}\left[ (s-s_0) \left( \Lambda_f \left( s-t_0,\alpha \lambda^{-1},\cos \right) - \Lambda_f \left( s-t_0,\alpha' \lambda^{-1},\cos \right) \right) \right] = 0,\]
we obtain:
\begin{multline}	\label{even_twists_double_poles_e_1}
	 \delta_0(s_0)(-1)^{j} \frac{(1/2)_j}{(j!)} \left( \alpha^{s_0-t_0-1} -\alpha'^{s_0-t_0-1} \right) \Res_{s=1}(s-1) \Lambda_f(s)  \\
	= i^{-[t_0]} N^{s_0-\frac{1}{2}} \frac{(2 \pi i)^{t_0}\Gamma_{\mathbb{R}}(1-s_0+t_0)^2}{t_0!\Gamma_{\mathbb{R}}\left(1-s_0+ [ t_0 ] \right)^2}  \Big( \alpha^{2s_0-t_0-1} \Res_{s=s_0} (s-s_0) \Lambda_g \left(s,\beta,\cos^{([t_0])} \right)  \\
	- \alpha'^{2s_0-t_0-1}\Res_{s=s_0} (s-s_0) \Lambda_g \left(s,\beta',\cos^{([t_0])} \right)  \Big), 
\end{multline}
where $j = \frac{1}{2}(t_0-s_0)$. In the case that $s_0 < 0$, the first line of equation~\eqref{even_twists_double_poles_e_1} vanishes. It follows that: 
\begin{multline*}
	q^{2s_0-t_0-1} \Res_{s=s_0} (s-s_0) \Lambda_g \left(s,\frac{b}{Nq},\cos^{([s_0])} \right)  \\
	= q'^{2s_0-t_0-1} \Res_{s=s_0} (s-s_0) \Lambda_g \left(s,\frac{b}{Nq'},\cos^{([s_0])} \right).
\end{multline*}
By varying $t_0$ we see that $\Res_{s=s_0} (s-s_0) \Lambda_g \left(s,\frac{b}{Nq},\cos^{\left([s_0]\right)} \right) = 0$ for $s_0<0$. 

Consider $\beta = \frac{b}{q}$ for $b<0$. In this case, we have $\lambda^{-1} \alpha = \frac{q}{Np}$ for $\lambda \in T_\beta$. We conclude from the previous paragraph that $\Res_{s = 0} s \Lambda_f \left( s-3,\lambda^{-1} \alpha,\cos \right) = 0$, and so equation~\eqref{odd_residues} with $s_0 = 0$, and $t_0 = 3$ implies that $\Res_{s = 0} s \Lambda_g\left(s,\frac{b}{q},\sin\right) = 0$. 

\subsubsection*{Step 3: Odd character twists}
Combining Step 1 and Step 2, we have shown that, for $\epsilon\in\{0,1\}$, we have  $\Res_{s = 0} s \Lambda_g\left(s,\frac{b}{q},\sin\right)=-\Res_{s = 0} s \Lambda_g\left(s,\frac{b}{q},\cos^{(1)}\right) = 0$.
Varying $b~\mathrm{mod}~q$ and summing, we deduce from equation~\eqref{character_twists_in_terms_of_additive_twists} that, for any odd character $\psi$ of conductor $q$, we have $\Res_{s = 0} s \Lambda_g(\psi,s) = 0$ as required. 

\subsubsection*{Step 4: Even character twists ($\epsilon=1$)}
It was shown in Step 1 that $\Res_{s = 0}s\Lambda_g\left(s,\frac{b}{q},\cos\right) = 0$. 
Varying $b~\mathrm{mod}~q$, we deduce from equation~\eqref{character_twists_in_terms_of_additive_twists} that, for any even character $\psi$ of conductor $q$, we have $\Res_{s = 0} s \Lambda_g(\psi,s) = 0$ as required. 

\subsubsection*{Step 5: Even character twists ($\epsilon=0$)}
When $s_0 = 0$ and $t_0 = 2$, the left-hand side of equation~\eqref{double_pole_residue_epsilon_0} vanishes. Equating the right-hand side of equation~\eqref{double_pole_residue_epsilon_0} with zero, and multiplying by $2\alpha^{3}$, we observe:
\begin{equation}\label{no_dependence_on_q_double} 
	N^{-\frac{1}{2}} \frac{(2 \pi i)^2\Gamma_{\mathbb{R}}(3)^2}{\Gamma_{\mathbb{R}}\left(1\right)^2} \Res_{s=0} s \Lambda_g(s,\beta,\cos) = -\Res_{s=1} (s-1) \Lambda_f(s).
\end{equation}
Equation~\eqref{no_dependence_on_q_double} implies that $\Res_{s=0} s\Lambda_g\left(s,\frac{b}{q},\cos\right)$ does not depend on $b$ coprime to $q$. 
For a primitive even character $\psi$, we may write $\Lambda_g(s,\psi)$ as a sum of $\Lambda_g\left(s,\frac{b}{q},\cos\right)$ using equation~\eqref{character_twists_in_terms_of_additive_twists}. 
Multiplying by $s$ and taking the residue at $s=0$, we get a constant multiplied by $\sum_{b\text{ mod }q}\psi(b)$ which is zero by character orthogonality. 
It follows that $\Res_{s = 0} s \Lambda_g(\psi,s) = 0$ as required.
\end{proof}

\begin{lem}\label{lem:epsilon_0_residues_simple}
Let $\alpha,\beta,r,s_0,t_0,M,c_{\lambda}$ be as in Lemma~\ref{lem:res_1}. If $\epsilon = 0$, then:
	
\begin{multline}\label{epsilon_0_residues_simple}
 \Res_{s = s_0} \left[ \sum_{\lambda \in T_{\beta,M}} c_\lambda \lambda^{2s-2t_0-1} \Lambda_f \left( s-t_0,\alpha \lambda^{-1},\cos \right) \right]\\
	=i^{-[t_0]} \left( N \alpha^2 \right)^{s_0-\frac{1}{2}} \alpha^{-t_0} \frac{(2 \pi i)^{t_0}}{t_0!} \Bigg[ \log(N\alpha^2) \frac{\Gamma_{\mathbb{R}}(1-s_0+t_0)^2}{\Gamma_{\mathbb{R}}\left(1-s_0+ [ t_0 ] \right)^2}\Res_{s=s_0}(s-s_0) \Lambda_g \left(s,\beta,\cos^{([t_0])} \right)  \\
	 + \frac{d}{ds}    \frac{\Gamma_{\mathbb{R}}(1-s+t_0)^2}{\Gamma_{\mathbb{R}}\left(1-s+ [ t_0 ] \right)^2}\Bigg|_{\substack{s=s_0}} \Res_{s=s_0}(s-s_0) \Lambda_g \left(s,\beta,\cos^{([t_0])} \right)  \\
	 + \frac{\Gamma_{\mathbb{R}}(1-s_0+t_0)^2}{\Gamma_{\mathbb{R}}\left(1-s_0+ [ t_0 ] \right)^2}\Res_{s=s_0}\Lambda_g \left(s,\beta,\cos^{([t_0])} \right)  \Bigg] \\
	 +(-1)^j \alpha^{s_0-t_0-1} \Big[\delta_0(s_0) \frac{d}{ds}\left(J_j(s)G_j(s)\right)\Big|_{\substack{s=1}} \Res_{s=1}  (s-1) \Lambda_f(s)  \\
-2 \delta_0(s_0) \frac{\left( \frac{1}{2} \right)_j}{j!} \log(\alpha) \Res_{s=1}  (s-1) \Lambda_f(s)  + \delta_0(s_0) \frac{\left( \frac{1}{2} \right)_j}{j!}  \Res_{s=1} \Lambda_f(s) \Big].
\end{multline}
\end{lem}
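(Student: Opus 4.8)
The plan is to take the residue at $s=s_0$ of both sides of equation~\eqref{long_lambda}, specialised to $\epsilon=0$ so that $\cos^{(\epsilon)}=\cos$. Since $s_0>t_0-2\ell_0+3/2$, Lemma~\ref{lem:holomorphic_function} shows that the function in equation~\eqref{long_lambda} is holomorphic at $s_0$, hence its residue there vanishes; solving for the residue of the additive-twist term $\sum_{\lambda}c_\lambda\lambda^{2s-2t_0-1}\Lambda_f(s-t_0,\alpha\lambda^{-1},\cos)$ then produces equation~\eqref{epsilon_0_residues_simple}. The main structural difference from Lemma~\ref{lem:residue_2} (the $\epsilon=1$ analogue) is that we cannot invoke Lemma~\ref{lem:L_functions_at_most_simple_poles}: here $\Lambda_f(s)$ and $\Lambda_g(s,\beta,\cos^{([t_0])})$ are known only (via Corollary~\ref{cor:twisted_mero}) to lie in $\mathcal{M}(-\infty,\infty)$, so they may carry genuine double poles. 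Every residue must therefore be computed with the two-term rule $\Res_{s=s_0}[g(s)h(s)]=g(s_0)\Res_{s=s_0}h(s)+g'(s_0)\Res_{s=s_0}(s-s_0)h(s)$, valid when $g$ is holomorphic at $s_0$ and $h$ has at most a double pole there; this rule is what generates all the logarithmic and $\frac{d}{ds}$-of-gammas terms in equation~\eqref{epsilon_0_residues_simple}.

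Carrying this out term by term: the prefactor $(N\alpha^2)^{s-1/2}\Gamma_{\mathbb{R}}(1-s+t_0)^2/\Gamma_{\mathbb{R}}(1-s+[t_0])^2$ multiplying $\Lambda_g(s,\beta,\cos^{([t_0])})$ is holomorphic at $s_0$ (its zeros and poles lie in $\Re(s)\geq 1$, whereas $s_0<1$), so the two-term rule applied to it yields precisely the $\log(N\alpha^2)$ term, the $\frac{d}{ds}$-term and the constant term of the first bracket in equation~\eqref{epsilon_0_residues_simple}. In the $\mathcal{I}_k,\widetilde{\mathcal{I}}_k$ sums only the index $k=j$, where $j=\frac{1}{2}(t_0-s_0)$, has a pole at $s_0$; the $\widetilde{\mathcal{I}}_j/(s-t_0+2j)$ summand contributes a simple pole, with residue $(\lambda\alpha)^{s_0-t_0-1/2}\widetilde{\mathcal{I}}_j(\alpha\lambda^{-1})$, while the $2\mathcal{I}_j/(s-t_0+2j)^2$ summand contributes a double pole, with residue $2\log(\lambda\alpha)(\lambda\alpha)^{s_0-t_0-1/2}\mathcal{I}_j(\alpha\lambda^{-1})$.

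It then remains to expand $\mathcal{I}_j(\alpha\lambda^{-1})$ and $\widetilde{\mathcal{I}}_j(\alpha\lambda^{-1})$ by collapsing the contour integrals of equations~\eqref{eq.I} and~\eqref{eq.Itilde} onto the poles at $s\in\{0,1\}$. Since $\epsilon=0$, the entire function $G_j(s)$ has a double zero at $s=0$ and a simple zero at $s=1$; hence $\Lambda_f(s)G_j(s)$ and $\Lambda_f(s)J_j(s)G_j(s)$ are holomorphic at $s=0$ and have at most a double pole at $s=1$. This yields $\mathcal{I}_j(\alpha\lambda^{-1})$ as in equation~\eqref{eq.I_twiddle_alpha_lambda}, and an analogous expansion of $\widetilde{\mathcal{I}}_j(\alpha\lambda^{-1})$ involving $\Res_{s=1}\Lambda_f(s)$, $\Res_{s=1}(s-1)\Lambda_f(s)$, $\lim_{s\to 1}J_j(s)G_j(s)$, $\frac{d}{ds}(J_jG_j)|_{s=1}$ and $\log(\alpha\lambda^{-1})$. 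Substituting these back, writing $\log(\lambda\alpha)=\log\lambda+\log\alpha$ and $\log(\alpha\lambda^{-1})=\log\alpha-\log\lambda$, and summing over $\lambda\in T_{\beta,M}$ against $c_\lambda$, the sums $\sum_\lambda c_\lambda\lambda^{s_0-t_0}=\sum_\lambda c_\lambda\lambda^{-2j}$ collapse to $\delta_0(s_0)$ by equation~\eqref{lambdas}. The point requiring care — and the reason equation~\eqref{epsilon_0_residues_simple} is free of the auxiliary $\lambda$'s beyond equation~\eqref{lambdas} — is that the sums $\sum_\lambda c_\lambda\lambda^{s_0-t_0}\log\lambda$, which equation~\eqref{lambdas} does not control, arise from both the $\mathcal{I}_j$ and $\widetilde{\mathcal{I}}_j$ contributions and must cancel against each other. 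That cancellation is exactly the identity $\lim_{s\to 1}J_j(s)G_j(s)=-2G_j'(1)$, which holds because the summand $-\Psi\!\left(\frac{1-s}{2}\right)$ of $J_j(s)$ (see equation~\eqref{eq:K} with $\epsilon=0$) has residue $-2$ at $s=1$ while $G_j(s)$ has a simple zero there; evaluating $\lim_{s\to 1}J_j(s)G_j(s)=\frac{(1/2)_j\,j!}{\sqrt{\pi}}$ then absorbs the $\sqrt{\pi}/(j!)^2$ factors, and collecting what survives gives equation~\eqref{epsilon_0_residues_simple}.
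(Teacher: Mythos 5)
Your proposal is correct and follows essentially the same route as the paper: take the residue at $s=s_0$ of the holomorphic function in equation~\eqref{long_lambda}, apply the two-term residue rule to the $\Lambda_g$ prefactor and to the $k=j$ terms of the $\mathcal{I}_k,\widetilde{\mathcal{I}}_k$ sums, and evaluate $\mathcal{I}_j,\widetilde{\mathcal{I}}_j$ by collapsing their contours onto $s\in\{0,1\}$ using the zero structure of $G_j$ and $J_jG_j$ for $\epsilon=0$. Your explicit identification of the $\log\lambda$ cancellation with the identity $J_j(1)G_j(1)=-2G_j'(1)$ is exactly the mechanism the paper uses (stated there as $\sqrt{\pi}\,J_j(1)G_j(1)=(1/2)_j\,j!$ together with the remark that the $\log(\lambda)$ terms cancel).
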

\begin{proof}
Proceeding as in the proof of Lemma~\ref{lem:residue_2}, we observe:
\begin{multline}\label{eq.rlasII}
		\Res_{s=s_0} \left[ (\lambda \alpha)^{s-t_0-\frac{1}{2}} \sum_{k=0}^{\ell_0-1} \left(\frac{\widetilde{\mathcal{I}}_k(\alpha \lambda^{-1})}{s-t_0+2k}+2\frac{\mathcal{I}_k(\alpha \lambda^{-1})}{(s-t_0+2k)^2} \right) \right] \\
		= (\lambda \alpha)^{s_0-t_0-\frac{1}{2}}  \widetilde{\mathcal{I}}_j(\alpha \lambda^{-1}) + 2 \log(\alpha \lambda) (\alpha \lambda)^{s_0-t_0-\frac{1}{2}} \mathcal{I}_j(\alpha \lambda^{-1}),
\end{multline}
where $j$ as in Lemma~\ref{lem:res_1}. We may evaluate $\mathcal{I}_j(\alpha\lambda^{-1})$ as in equation~\eqref{eq.I_twiddle_alpha_lambda}. On the other hand, recalling that $\Lambda_f(s)$ is holomorphic away from at most double poles in the set $\{0,1\}$, we compute:
\begin{equation*}
\widetilde{\mathcal{I}}_j(\alpha \lambda^{-1}) = (-1)^j \frac{\sqrt{\pi}}{(j!)^2} \sum_{p \in \{0,1 \} } \Res_{s = p} \left[ \Lambda_f(s) J_j(s)G_j(s) (\alpha \lambda^{-1})^{\frac{1}{2}-s}\right],
\end{equation*}
where $G_j(s)$ is as in equation \eqref{eq:G} and $J_j(s)$ is as in equation \eqref{eq:K}, with $\epsilon=0$. Since $J_j(s)G_j(s)$ has a double zero at $s=0$, but is non-zero at $s=1$, we deduce
\begin{multline} \label{eq.residue_I_twiddle}
	\widetilde{\mathcal{I}}_j(\alpha \lambda^{-1}) = (-1)^j (\alpha \lambda^{-1})^{-\frac{1}{2}} \frac{\sqrt{\pi}}{(j!)^2} \Bigg( \frac{d}{ds}\left(J_j(s)G_j(s)\right)\Big|_{\substack{s=1}} \Res_{s=1}(s-1)\Lambda_f(s)\\
	 -  J_j(1)G_j(1)  \log(\alpha \lambda^{-1}) \Res_{s=1}(s-1)\Lambda_f(s) + J_j(1)G_j(1)  \Res_{s=1}  \Lambda_f(s)\Bigg).
\end{multline}
Observing the evaluation $\sqrt{\pi}~J_j(1)G_j(1) =\left(1/2\right)_j j!$, and using equations~\eqref{eq.I_twiddle_alpha_lambda}, ~\eqref{eq.rlasII}, and~\eqref{eq.residue_I_twiddle}, we obtain
\begin{multline*}
\Res_{s=s_0} \left[ \sum_{\lambda \in T_{\beta,M}}c_\lambda  (\lambda \alpha)^{s-t_0-\frac{1}{2}} \sum_{k=0}^{\ell_0-1} \left(\frac{\widetilde{\mathcal{I}}_k(\alpha \lambda^{-1})}{s-t_0+2k}+2\frac{\mathcal{I}_k(\alpha \lambda^{-1})}{(s-t_0+2k)^2} \right) \right] \\
= (-1)^j \alpha^{s_0-t_0-1} \Bigg[\delta_0(s_0) \frac{\sqrt{\pi}}{(j!)^2}\frac{d}{ds}\left(J_j(s)G_j(s)\right)\Big|_{\substack{s=1}} \Res_{s=1} (s-1) \Lambda_f(s)  \\
-2 \delta_0(s_0) \frac{\left( 1/2\right)_j}{j!} \log(\alpha) \Res_{s=1} (s-1) \Lambda_f(s)  + \delta_0(s_0) \frac{\left( 1/2 \right)_j}{j!}  \Res_{s=1} \Lambda_f(s) \Bigg],
\end{multline*}
in which we note that the $\log(\lambda)$ terms cancel. 
On the other hand, we compute directly:
\begin{multline*}
	\Res_{s = s_0}\left[ i^{-[t_0]} \left( N \alpha^2 \right)^{s-\frac{1}{2}} \alpha^{-t_0} \frac{ (2 \pi i)^{t_0}\Gamma_{\mathbb{R}}(1-s+t_0)^2}{t_0!\Gamma_{\mathbb{R}}\left(1-s+ [ t_0 ] \right)^2}\Lambda_g \left( s,\beta,\cos^{([t_0])} \right) \right] \\
	= i^{-[t_0]} \left( N \alpha^2 \right)^{s_0-\frac{1}{2}} \alpha^{-t_0} \frac{(2 \pi i)^{t_0}}{t_0!} \Bigg[ \log(N\alpha^2)\frac{\Gamma_{\mathbb{R}}(1-s_0+t_0)^2}{\Gamma_{\mathbb{R}}\left(1-s_0+ [ t_0 ] \right)^2} \Res_{s=s_0}(s-s_0) \Lambda_g \left(s,\beta,\cos^{([t_0])} \right) \\
	 + \frac{d}{ds}    \frac{\Gamma_{\mathbb{R}}(1-s+t_0)^2}{\Gamma_{\mathbb{R}}\left(1-s+ [ t_0 ] \right)^2}\Bigg|_{\substack{s=s_0}} \Res_{s=s_0}(s-s_0) \Lambda_g \left(s,\beta,\cos^{([t_0])} \right) \\
	+ \frac{\Gamma_{\mathbb{R}}(1-s_0+t_0)^2}{\Gamma_{\mathbb{R}}\left(1-s_0+ [ t_0 ] \right)^2}\Res_{s=s_0}\ \Lambda_g \left(s,\beta,\cos^{([t_0])} \right) \Bigg].
\end{multline*}
By assumption $s_0>t_0-2\ell_0 + \frac{3}{2}$, and so the function in equation~\eqref{long_lambda} is holomorphic at $s=s_0$. Equation~\eqref{epsilon_0_residues_simple} follows by taking the residue of this function at $s=s_0$.
\end{proof}
\begin{prop}
Make the assumptions of Theorem \ref{thm:0Converse}. If $\psi$ is a primitive Dirichlet character with conductor $q \in \mathcal{P}$, then $\Lambda_f(s,\psi)$ and $\Lambda_g(s,\psi)$ are entire and bounded in vertical strips.	
\end{prop}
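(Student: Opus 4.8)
The plan is to deduce the Proposition from the results already established together with the functional equation~\eqref{eq:FE}, mimicking the passage from double to simple poles carried out in Lemma~\ref{lem:twistssimplepoles}. By~\eqref{character_twists_in_terms_of_additive_twists}, each $\Lambda_g(s,\psi)$ (and each $\Lambda_f(s,\psi)$) is a finite $\mathbb{C}$-linear combination of additive twists, which lie in $\mathcal{M}(-\infty,\infty)$ by Corollary~\ref{cor:sigafter}; hence so do $\Lambda_g(s,\psi)$ and $\Lambda_f(s,\psi)$, and by Lemma~\ref{lem:twistssimplepoles} they are holomorphic off $\{0,1\}$ with at most simple poles there. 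Consequently boundedness on vertical strips is already known (membership in $\mathcal{M}$ gives it away from the real axis, holomorphy the rest), finite order follows from~\eqref{eq:FE} via Phragm\'en--Lindel\"of, and it remains only to show that the residues at $0$ and $1$ vanish. By~\eqref{eq:FE}, $\Res_{s=1}\Lambda_f(s,\psi)$ is a non-zero multiple of $\Res_{s=0}\Lambda_g(s,\bar\psi)$, and conversely; since conjugation permutes the primitive characters of conductor $q\in\mathcal{P}$, it suffices to prove
\[
\Res_{s=0}\Lambda_g(s,\psi)=0\qquad\text{for all primitive }\psi\text{ of conductor }q\in\mathcal{P},
\]
the analogue for $\Lambda_f$ then following by reversing the roles of $f$ and $g$, after which all four residues vanish by~\eqref{eq:FE}. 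Finally, by~\eqref{character_twists_in_terms_of_additive_twists} and character orthogonality ($\psi$ being non-principal, as $q$ is prime), this reduces to showing that $\Res_{s=0}\Lambda_g\!\left(s,\frac{b}{q},\cos^{(\epsilon_\psi)}\right)$ either vanishes or is independent of $b$ coprime to $q$.

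\emph{The case $\epsilon=1$.} Specialise Lemma~\ref{lem:residue_2} at $s_0=0$, choosing $\alpha$, $\beta$ and $T_{\beta,M}$ as in the proof of Lemma~\ref{lem:twistssimplepoles} so that the additive twist $\Lambda_f\!\left(s-t_0,\alpha\lambda^{-1},\sin\right)$ on the left of~\eqref{residues_simple} is holomorphic for $\Re(s)<-\sigma$ (being odd, $\sin$ makes equation~\eqref{functional_equations_additive_twists} express it as a sum of values $\Lambda_g(1-s,\cdot)$ alone); then the left-hand side of~\eqref{residues_simple} vanishes. On the right, the terms carrying $\Res_{s=1}\Lambda_f(s)$ vanish by Proposition~\ref{prop:epsilon_1_holo}, the ratio $\Gamma_{\R}(1-s+t_0)^2/\Gamma_{\R}(1-s+[t_0])^2$ is holomorphic and non-zero at $s=0$, and $\Res_{s=0}s\,\Lambda_g\!\left(s,\beta,\cos^{([1+t_0])}\right)=0$ by Lemma~\ref{lem:twistssimplepoles}; so what survives is a non-zero multiple of $\Res_{s=0}\Lambda_g\!\left(s,\beta,\cos^{([1+t_0])}\right)$, which is therefore $0$. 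Letting $t_0$ range over both parities covers $\cos$ and $\sin$, and the $b\mapsto -b'$ device of Lemma~\ref{lem:twistssimplepoles} removes the sign constraint; hence $\Res_{s=0}\Lambda_g\!\left(s,\frac{b}{q},\cos^{(r)}\right)=0$ for $r\in\{0,1\}$ and all admissible $b$.

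\emph{The case $\epsilon=0$.} Here $\Lambda_f(s)$ may genuinely have a double pole at $\{0,1\}$, and the relevant twist on the left of Lemma~\ref{lem:epsilon_0_residues_simple} is the even twist $\Lambda_f\!\left(s-t_0,\alpha\lambda^{-1},\cos\right)$, for which~\eqref{functional_equations_additive_twists} also contributes a term built from $\Lambda_f(s)$ and $\Lambda_f(s,\psi_0)$. Following Steps~2 and~5 of the proof of Lemma~\ref{lem:twistssimplepoles}, I form the difference of the identity of Lemma~\ref{lem:epsilon_0_residues_simple} at two moduli $\beta,\beta'$ with a common numerator; the resulting $\Lambda_f$-twists then have a common denominator, so the $\Lambda_f(s)$ and $\Lambda_f(s,\psi_0)$ contributions cancel and their difference is holomorphic for $\Re(s)<-\sigma$, killing the left-hand side at $s=0$. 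The surviving relation, combined with Lemma~\ref{lem:twistssimplepoles} and equation~\eqref{no_dependence_on_q_double} (which already gives the double-pole coefficient of $\Lambda_g\!\left(s,\frac{b}{q},\cos\right)$ at $s=0$ independent of $b$), shows that $\Res_{s=0}\Lambda_g\!\left(s,\frac{b}{q},\cos^{(\epsilon_\psi)}\right)$ is itself independent of $b$. In either case, summing over $b$ against $\bar\psi(-b)$ in~\eqref{character_twists_in_terms_of_additive_twists} and using orthogonality gives $\Res_{s=0}\Lambda_g(s,\psi)=0$; reversing the roles of $f$ and $g$ and applying~\eqref{eq:FE} then shows all four residues vanish, so $\Lambda_f(s,\psi)$ and $\Lambda_g(s,\psi)$ are entire and bounded in vertical strips.

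The main obstacle will be the $\epsilon=0$ bookkeeping: organising the difference-of-moduli cancellations so that the residues of the possibly doubly-polar untwisted $\Lambda_f$ drop out cleanly, leaving no residual $b$-dependence and no surviving $\log\lambda$ terms. By contrast, the $\epsilon=1$ case and all of the reductions are essentially transcriptions of arguments already carried out in the passage from double to simple poles.
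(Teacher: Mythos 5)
Your proposal follows essentially the same route as the paper: reduce via \eqref{eq:FE} and \eqref{character_twists_in_terms_of_additive_twists} to the vanishing (or $b$-independence) of $\Res_{s=0}\Lambda_g\left(s,\frac{b}{q},\cos^{(r)}\right)$, settle $\epsilon=1$ using the residue identity of Lemma~\ref{lem:residue_2} together with Proposition~\ref{prop:epsilon_1_holo} and Lemma~\ref{lem:twistssimplepoles}, and settle $\epsilon=0$ by the difference-of-moduli argument combined with equation~\eqref{no_dependence_on_q_double}. The only slip is cosmetic: Lemma~\ref{lem:residue_2} assumes $[t_0]=[s_0]$, so for $t_0$ of the opposite parity you should instead take the residue of the function in equation~\eqref{long_lambda} directly (the paper's equation~\eqref{odd_residues_simple}), where the $\mathcal{I}_k$ and $\widetilde{\mathcal{I}}_k$ terms contribute nothing because their poles lie at $s\equiv t_0 \bmod 2$.
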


That is, assumption (3) in Theorem~\ref{thm:EntireConverse} is valid.

\begin{proof}
Since $\Lambda_f(s,\psi)$ and $\Lambda_g(s,\psi)$ are in $\mathcal{M}(-\infty,\infty)$, it suffices to prove entirety. 
Mimicking the argument surrounding equation~\eqref{eq.suffices}, we see it suffices to show that $\Res_{s=0}\Lambda_g(s,\psi)=0$ (reversing the roles of $f$ and $g$ will yield that $\Res_{s=0}\Lambda_f(s,\psi)=0$).

Given $s_0 \in \mathbb{Z}_{<1}$, choose $\ell_0,M,t_0\in\mathbb{Z}_{>1}$ such that $M\geq2\ell_0>t_0-s_0+\frac32$. 
For $\alpha\in\mathbb{Q}_{>0}$ and $\beta=-1/N\alpha$ choose $T_{\beta,M}\subset T_{\beta}$ of size $M$, and, for each $\lambda\in T_{\beta,M}$, choose $c_{\lambda}\in\mathbb{C}$ satisfying equation~\eqref{lambdas}. 
As in the proof of Lemma \ref{lem:twistssimplepoles}, function in equation \eqref{long_lambda} is holomorphic at $s=s_0$. 
Taking the residue at $s = s_0$, we get:
\begin{multline} \label{odd_residues_simple}
	\Res_{s=s_0} \left[i^{-[\epsilon + t_0]}(N \alpha^2)^{s-\frac{1}{2}}\alpha^{-t_0} \frac{(2 \pi i)^{t_0}\Gamma_{\mathbb{R}}(1-s+t_0)^2}{t_0!\Gamma_{\mathbb{R}}\left(1-s+ [ t_0 ] \right)^2}\Lambda_g \left( s,\beta,\cos^{([\epsilon + t_0])} \right)  \right] \\
	=  \Res_{s = s_0}\left[\sum_{\lambda \in T_{\beta,M}} c_\lambda \lambda^{2s-2t_0-1} (-i\pi)^\epsilon   \Lambda_f \left( s-t_0,\alpha \lambda^{-1},\cos^{(\epsilon)} \right) \right].
\end{multline}
Following the strategy from Lemma \ref{lem:twistssimplepoles}, we split the remainder of the proof into five steps.
\subsubsection*{Step 1: Additive twists ($\epsilon = 1$)}  
Consider first $\beta = \frac{b}{Nq}$ for some $b<0$ coprime to $Nq$. 
Using our assumptions on $s_0$, $t_0$, $\{a_n\}_{n=1}^{\infty}$, and $\sigma$, we deduce from equation~\eqref{functional_equations_additive_twists} that $\Lambda_f(s-t_0,\alpha \lambda^{-1},\sin)$ is holomorphic at $s = s_0$. 
In particular, the right-hand side of equation \eqref{odd_residues_simple} is zero. Choose $t_0$ of different parity to $s_0$, so that $[\epsilon+t_0]=[s_0]$.  In the proof of Lemma \ref{lem:character_only_simple}, it was demonstrated that 
$\Lambda_g \left( s,\beta,\cos^{([s_0])} \right)$ has at most a simple pole at $s=s_0$. Therefore, the left hand side of equation~~\eqref{odd_residues_simple} is equal to 
\[
i^{-[\epsilon + t_0]}(N \alpha^2)^{s_0-\frac{1}{2}}\alpha^{-t_0} \frac{(2 \pi i)^{t_0}\Gamma_{\mathbb{R}}(1-s_0+t_0)^2}{t_0!\Gamma_{\mathbb{R}}\left(1-s_0+ [ t_0 ] \right)^2} \Res_{s=s_0} \Lambda_g \left( s,\beta,\cos^{([s_0])} \right) .
\]
It follows that $\Res_{s=s_0}\Lambda_g \left( s,\beta,\cos^{([s_0)]} \right) = 0$ for all $s_0<1$. 

In the proof of Lemma \ref{lem:character_only_simple} it was established that the function $\Lambda_g \left(s,\beta,\cos^{([1+s_0])} \right)$ has at most a simple pole at $s = s_0$. Taking $t_0$ of the same parity to $s_0$, we may apply equation \eqref{messy_1} to obtain:
\begin{multline}\label{even_residues_e_1_simple}
		i^{-[1+t_0]} (N\alpha^2)^{s_0-1/2} \alpha^{-t_0} \frac{(2 \pi i)^{t_0}\Gamma_{\mathbb{R}}(1-s+t_0)^2}{t_0!\Gamma_{\mathbb{R}}\left(1-s+ [ t_0 ] \right)^2} \Res_{s=s_0} \Lambda_g \left(s,\beta,\cos^{([1+s_0])} \right)  \\
		=\Res_{s=s_0} \left[ i \pi \sum_{\lambda \in T_{\beta,M}} c_\lambda \lambda^{2s-2t_0-1}  \Lambda_f \left(s-t_0,\alpha \lambda^{-1},\sin \right)  \right].
\end{multline}		
The second line of equation \eqref{even_residues_e_1_simple} vanishes, and so $\Res_{s=s_0} \Lambda_g \left( s,\frac{b}{Nq},\cos^{([1+s_0])} \right) = 0$ for all $s_0 <1$. 

If, instead, $\beta = \frac{b}{Nq}$ for some $b>0$ such that $(b,Nq)=1$, then we may reduce to the case $b<0$ as in the proof of Lemma \ref{lem:character_only_simple}. 

Consider $\beta = \frac{b}{q}$ for $b<0$ coprime to $q$, so that $\lambda^{-1} \alpha = \frac{q}{Np}$ for a prime $p \equiv -b \mod q$. As above, the second lines of equations \eqref{odd_residues_simple} and \eqref{even_residues_e_1_simple} vanish. Applying equation~\eqref{odd_residues_simple} with $s_0 = 0$, $t_0=3$ gives $\Res_{s=0} \Lambda_g\left(s,\frac{b}{q},\cos\right)  = 0$, and applying equation~\eqref{even_residues_e_1_simple} with $s_0 = 0$, $t_0=2$ gives $\Res_{s=0} \Lambda_g\left(s,\frac{b}{q},\sin\right)= 0$. 

\subsubsection*{Step 2: Additive twists ($\epsilon = 0$)}  
Consider $\beta=\frac{b}{Nq}$ with $b<0$, and let $\beta' \in \mathbb{Q}_{<0}$ have the same numerator. For $t_0 \in \mathbb{Z}_{\geq0}$ we can choose as in the proof of Proposition~\ref{prop.Atwistsmero} a set $T_M \subset T_\beta \cap T_{\beta'}$ with $M$ as in Step 1, and we can find $c_\lambda \in \C$ such that equation~\eqref{lambdas} is satisfied. Taking $t_0$ of different parity to $s_0$, equation~\eqref{odd_residues_simple} applies to both $\beta$ and $\beta'$.  Subtracting these cases of equation~\eqref{odd_residues_simple}, we get
\begin{multline}\label{odd_residues_e_0_simple}
	i^{-[t_0]}N^{s_0-\frac{1}{2}} \frac{(2 \pi i)^{t_0}\Gamma_{\mathbb{R}}(1-s+t_0)^2}{t_0!\Gamma_{\mathbb{R}}\left(1-s+ [ t_0 ] \right)^2}\\
	\cdot \left( \alpha^{2s_0-t_0-1} \Res_{s=s_0} \Lambda_g \left( s,\beta,\cos^{([1+s_0])} \right)  - \alpha'^{2s_0-t_0-1} \Res_{s=s_0}  \Lambda_g \left( s,\beta',\cos^{([1+s_0])} \right) \right) \\
	=  \Res_{s=s_0} \left[ \sum_{\lambda \in T_M} c_\lambda \lambda^{2s-2t_0-1} \left( \Lambda_f \left( s-t_0,\alpha'\lambda^{-1},\cos \right) - \Lambda_f \left( s-t_0,\alpha \lambda^{-1},\cos \right) \right) \right],	
\end{multline}
where we have written $\alpha'=-1/N\beta'$ and used the fact that the functions $\Lambda_g \left(s,\beta,\cos^{([1+s_0])}\right)$ and $\Lambda_g \left(s,\beta',\cos^{([1+s_0])}\right)$ have at most a simple pole at $s = s_0$ (which was established in the proof of Lemma \ref{lem:character_only_simple}). 
As per the argument following equation~\eqref{odd_residues_e_0}, the last line of equation~\eqref{odd_residues_e_0_simple} vanishes. 
It follows that:
\begin{equation*}
	\alpha^{2s_0-t_0-1} \Res_{s=s_0}  \Lambda_g \left( s,\beta,\cos^{([1+s_0])} \right)   = \alpha'^{2s_0-t_0-1} \Res_{s=s_0}   \Lambda_g \left( s,\beta',\cos^{([1+s_0])} \right)  .
\end{equation*}
Varying $t_0$, whilst keeping the same parity, we deduce that $\Res_{s=s_0}  \Lambda_g \left(s,\frac{b}{Nq},\cos^{([1+s_0])} \right) = 0$, for all $s_0<1$ and $b<0$ coprime to $q$. 

Taking $t_0$ of the same parity as $s_0$, equation \eqref{epsilon_0_residues_simple} applies to both $\beta$ and $\beta'$. Arguing as above, we are lead to:
 \begin{equation*}
 	q^{2s_0-t_0-1} \Res_{s = s_0} \Lambda_g\left(s,\frac{b}{Nq},\cos^{([s_0])} \right) = q'^{2s_0-t_0-1} \Res_{s = s_0} \Lambda_g\left(s,\frac{b}{Nq'},\cos^{([s_0])} \right).
 \end{equation*}
Varying $t_0$, we conclude that $\Res_{s = s_0} \Lambda_g\left(s,\frac{b}{Nq},\cos^{([s_0])} \right) = 0$ for all $s_0 <0$. If, instead, $\beta = \frac{b}{Nq}$ for some $b>0$ such that $(b,Nq)=1$, then we may reduce to the case $b<0$ as in the proof of Lemma \ref{lem:character_only_simple}. 

Consider $\beta = \frac{b}{q}$ with $b<0$, so that $\lambda^{-1} \alpha = \frac{q}{Np}$ for $\lambda \in T_\beta$. As per the previous paragraph, we find that $\Res_{s = 0}  \Lambda_f \left( s-3,\lambda^{-1} \alpha,\cos \right)  = 0$. 
From Lemma \ref{lem:twistssimplepoles} we know $\Res_{s = 0} s \Lambda_g\left(s,\frac{b}{q},\sin\right) = 0 $.  
Applying equation~\eqref{odd_residues_simple} with $s_0=0$ and $t_0 = 3$, we therefore see that $\Res_{s = 0} \Lambda_g\left(s,\frac{b}{q},\sin\right) = 0$. 

\subsubsection*{Step 3: Odd character twists}
Combining Step 1 and Step 2, we have shown that, for $\epsilon\in\{0,1\}$, we have  $\Res_{s = 0} \Lambda_g\left(s,\frac{b}{q},\sin\right)=-\Res_{s = 0} \Lambda_g\left(s,\frac{b}{q},\cos^{(1)}\right) = 0$.
Varying $b~\mathrm{mod}~q$ and summing, we deduce from equation~\eqref{character_twists_in_terms_of_additive_twists} that, for any odd character $\psi$ of conductor $q$, we have $\Res_{s = 0} \Lambda_g(\psi,s) = 0$ as required. 

\subsubsection*{Step 4: Even character twists ($\epsilon=1$)}
It was shown in Step 1 that $\Res_{s = 0}\Lambda_g\left(s,\frac{b}{q},\cos\right) = 0$. 
Varying $b~\mathrm{mod}~q$, we deduce from equation~\eqref{character_twists_in_terms_of_additive_twists} that, for any even character $\psi$ of conductor $q$, we have $\Res_{s = 0} \Lambda_g(\psi,s) = 0$ as required. 

\subsubsection*{Step 5: Even character twists ($\epsilon=0$)}
When $s_0=0$ and $t_0=2$, the left-hand side of equation~\eqref{epsilon_0_residues_simple} vanishes. 
Substituting in equation \eqref{no_dependence_on_q_double} and dividing through by $\alpha^{-3}$, we see that:
\begin{multline}\label{ddob}
	\frac{\Gamma_{\R}(3)^2}{\Gamma_{\R}(1)^2} \Res_{s=0}\Lambda_g(s,\beta,\cos)  \\
= - \left(\log(N)\frac{\Gamma_{\R}(3)^2}{\Gamma_{\R}(1)^2} + \frac{d}{ds}   \frac{\Gamma_{\R}(3-s)^2}{\Gamma_{\R}(1-s)^2} \Bigg|_{\substack{s=0}} \right)\Res_{s=0}s \Lambda_g(s,\beta,\cos)   \\
	+ N^{\frac{1}{2}} \frac{2}{(2 \pi i)^2}\frac{d}{ds}\left(J_j(s)G_j(s)\right)\Big|_{s=1} \Res_{s=1} (s-1) \Lambda_f(s).
\end{multline}
Since we make the same assumptions as in Lemma~\ref{lem:twistssimplepoles}, equation~\eqref{no_dependence_on_q_double} holds.
Substituting equation~\eqref{ddob} into equation~\eqref{no_dependence_on_q_double}, we deduce that $\Res_{s=0}s\Lambda_g(s,\beta,\cos)$ does not depend on $b$ coprime to $q$. 
Therefore $\Res_{s=0}\Lambda_g(s,\beta,\cos)$ is also independent of $b$ coprime to $q$. 
As in the proof of Lemma~\ref{lem:twistssimplepoles}, it follows from equation~\eqref{character_twists_in_terms_of_additive_twists} and character orthogonality that, for even primitive characters $\psi$, we have $\Res_{s=0} \Lambda_g(s,\psi)=0$.

\end{proof}

Altogether, we have established that the assumptions of Theorem~\ref{thm:0Converse} imply those of Theorem~\ref{thm:EntireConverse}.

\section{Proof of Theorem~\ref{thm.ArtinQuotients}}\label{sec:mainproof}
We will use the notation of Section~\ref{sec.intro}.
From equation~\eqref{eq.CancelGamma}, we recall the quotient $L(s)=L(s,\phi)/\zeta(s)$, and, denoting by $\tilde{\phi}$ the contragredient to $\phi$, we introduce analogously $\tilde{L}(s)=L(s,\tilde{\phi})/\zeta(s)$. Both $L(s)$ and $\tilde{L}(s)$ may be written as Dirichlet series:
\[
L(s)=\sum_{n=1}^{\infty}a_nn^{-s}, \ \ \tilde{L}(s)=\sum_{n=1}^{\infty}c_nn^{-s}.
\]
One can compute the Dirichlet coefficients $a_n$ (resp. $c_n$) of $L(s)$ (resp. $\tilde{L}(s)$) by first expanding each factor in equation~\eqref{ArtinEP} (resp. the analogous equation for $\tilde{\phi}$) as a geometric series, and subsequently performing the necessary Dirichlet series manipulations.
Doing so, we deduce that $a_n=O(n^{\sigma})$ (resp. $c_n=O(n^{\sigma})$) for all $\sigma>0$.

For any primitive Dirichlet character $\psi$ modulo $q$, the Dirichlet $L$-function $L(s,\psi)$ has completion $\Lambda(s,\psi)=\Gamma_{\mathbb{R}}(s+\epsilon_{\psi})L(s,\psi)$, where $\epsilon_{\psi}\in\{0,1\}$ is such that $\psi(-1)=(-1)^{\epsilon_{\psi}}$. 
As reviewed in \cite[Chapter~1]{ILP}, the completion $\Lambda(s,\psi)$ satisfies the functional equation:
\begin{equation}\label{eq.FED}
\Lambda(s,\psi)=i^{-\epsilon_{\psi}}\frac{\tau(\psi)}{q^{1/2}}q^{\frac12-s}\Lambda(1-s,\bar{\psi}).
\end{equation}
Recall that $p$ (resp. $m$) denotes the dimension of the $(+1)$-eigenspace (resp. $(-1)$-eigenspace) of $\phi(c)$, and consider first the case that $p\in\{1,3\}$. 
Let $N$ denote the conductor of $\phi$, and let $\mathcal{P}$ denote the set of odd primes coprime to $N$.
For any primitive $\psi$ mod $q\in\mathcal{P}$, the tensor product representation $\phi\otimes\psi$ (resp. $\tilde{\phi}\otimes\bar{\psi}$) is a $3$-dimensional Artin representation of $\mathrm{Gal}(\overline{\mathbb{Q}}/\mathbb{Q})$, with completed $L$-function $\Lambda(s,\phi\otimes\psi)=\Gamma_{\mathbb{R}}(s+\epsilon_{\psi})^p\Gamma_{\mathbb{R}}(s+[\epsilon_{\psi}+1])^mL(s,\phi\otimes\psi)$. 
Similarly to equation~\eqref{eq.CancelGamma}, we have
\begin{equation}\label{eq.twistquotient}
\begin{split}
\frac{\Lambda(s,\phi\otimes\psi)}{\Lambda(s,\psi)}=\Gamma_{\mathbb{R}}(s+\epsilon_{\psi})^{p-1}\Gamma_{\mathbb{R}}(s+[\epsilon_{\psi}+1])^m\sum_{n=1}^{\infty}a_n\psi(n)n^{-s}, \\
\left(\text{resp.} \ \ \frac{\Lambda(s,\tilde{\phi}\otimes\bar{\psi})}{\Lambda(s,\psi)}=\Gamma_{\mathbb{R}}(s+\epsilon_{\psi})^{p-1}\Gamma_{\mathbb{R}}(s+[\epsilon_{\psi}+1])^m\sum_{n=1}^{\infty}c_n\bar{\psi}(n)n^{-s}\right).
\end{split}
\end{equation}
As reviewed in \cite[Chapter~4]{ILP}, the completed Artin $L$-function $\Lambda(s,\phi\otimes\psi)$ satisfies a functional equation of the form: 
\begin{equation}\label{eq.FEAshape}
\Lambda(s,\phi\otimes\psi)=\epsilon(s,\phi\otimes\psi)\Lambda(1-s,\tilde{\phi}\otimes\bar{\psi}).
\end{equation}
Appling the formulae in \cite[Section~5]{Deligne}, we get
\begin{equation}\label{eq.wppwp}
\epsilon(s,\phi\otimes\psi)=w(\phi)i^{-3\epsilon_{\psi}}\chi(q)\psi(N)\frac{\tau(\psi)^3}{q^{3/2}}\left(Nq^3\right)^{\frac12-s},
\end{equation}
for some character $\chi$ mod $N$ (not necessarily primitive),
for some $w(\phi)=w(\phi\otimes\textbf{1})$ with absolute value $1$.
According to \cite[Section~3.11]{Deligne}, we may write $w(\phi)$ as a product of local factors $w(\phi)=\prod_vw_v(\phi_v)$, in which the index $v$ varies over the places of $F$. 
Denoting by $w_{\infty}(\phi)$ (resp. $w_{<\infty}(\phi)$) the product over the archimedean (resp. non-archimedean) places, we have $w(\phi)=w_{\infty}(\phi)w_{<\infty}(\phi)$.
Using \cite[equations~(3.6),~(3.7)]{Knapp}, at each archimedean place we may infer the local epsilon factor from the corresponding gamma factor written in canonical form. Doing so, we ultimately calculate $w_{\infty}(\phi)=i^m$. 

Combining this with equations \eqref{eq.FEAshape} and \eqref{eq.wppwp}, we deduce that $\Lambda(s,\phi\otimes\psi)$ satisfies the functional equation:
\begin{equation}\label{eq.FEA}
\Lambda(s,\phi\otimes\psi)=w_{<\infty}(\phi)i^{m-3\epsilon_{\psi}}\chi(q)\psi(N)\left(Nq^{3}\right)^{1/2-s}\Lambda(1-s,\tilde{\phi}\otimes\bar{\psi}).
\end{equation}
For each primitive $\psi$ mod $q\in\mathcal{P}$, dividing equation \eqref{eq.FEA} by equation~\eqref{eq.FED}, using equation~\eqref{eq.twistquotient}, and noting that $i^{m-2\epsilon_{\psi}}=(-1)^{m/2-\epsilon_{\psi}}$, we recover equation~\eqref{eq:FE} with $b_n=w_{<\infty}(\phi)c_n$ and $\epsilon=m/2\in\{0,1\}$.
Assuming for a contradiction that $\Lambda(s,\phi)/\xi(s)$ has finitely many poles, Theorem~\ref{thm:0Converse} implies that $L(s)=L_f(s)$ for some weight 0 Maass form $f$. 
Therefore, we may write:
\[
f(z)=\sum_k \alpha_kh_k(z) + \sum_{\ell}\beta_{\ell}E_{\ell}(z),
\]
for cuspidal Hecke eigenforms $h_k$ and Eisenstein series $E_{\ell}$. 
It follows that:
\begin{equation}\label{eq.LHE}
L_f(s)=\sum_k\alpha_kL_{h_k}(s)+\sum_{\ell}\beta_{\ell}L_{E_{\ell}}(s).
\end{equation}
By construction $L_f(s)$ is equal to the quotient $L(s,\phi)/\zeta(s)$, and so:
\begin{equation}\label{eq.lineardependence}
L(s,\phi)=\zeta(s)L_f(s)=\zeta(s)\left(\sum_{k}\alpha_kL_{h_k}(s)+\sum_{\ell}\beta_{\ell}L_{E_{\ell}}(s)\right).
\end{equation}
Since we assumed that $L(s,\phi)$ was primitive, and $\zeta(s)$ has an Euler product, it cannot happen that $L_f(s)$ has an Euler product.
Since each $L_{h_k}(s)$ (resp. $L_{E_{\ell}}(s)$) does have an Euler product, it follows that $\#\{k\}+\#\{\ell\}>1$. 
Equation~\eqref{eq.lineardependence} therefore contradicts the main theorem \cite{KMP}, which establishes the linear independence of Euler products in a large axiomatic class including Artin $L$-functions and automorphic $L$-functions.

The case  $p=2$ is similar. Indeed, in this case, the gamma factor in equation~\eqref{eq.CancelGamma} is equal to (half of) the gamma factor $\Gamma_{\C}(s)=2\Gamma_{\R}(s)\Gamma_{\R}(s+1)$ for a holomorphic modular form. Dividing equation~\eqref{eq.FEA} by equation~\eqref{eq.FED}, we get \cite[equation~(1)]{WCTWP}. 
If the quotient $\Lambda(s,\phi)/\xi(s)$ had only finitely many poles, then \cite[Theorem~1.1]{WCTWP} would imply that it is the completed $L$-function of a weight 1 modular form and we may argue as above.

\end{document}